\tikzset{
sedge/.append style={shorten <=9pt, shorten >=9pt}
}
\tikzset{
  @arc through/.style 2 args={
    to path={
      \pgfextra
        \pgfextract@process\pgf@tostart{\tikz@scan@one@point\pgfutil@firstofone(\tikztostart)\relax}%
        \pgfextract@process\pgf@tothrough{\tikz@scan@one@point\pgfutil@firstofone#1}%
        \pgfextract@process\pgf@totarget{\tikz@scan@one@point\pgfutil@firstofone(\tikztotarget)\relax}%
        \pgfextract@process\pgf@topointMidA{\pgfpointlineattime{.5}{\pgf@tostart}{\pgf@tothrough}}%
        \pgfextract@process\pgf@topointMidB{\pgfpointlineattime{.5}{\pgf@totarget}{\pgf@tothrough}}%
        \pgfextract@process\pgf@tocenter{%
          \pgfpointintersectionoflines{\pgf@topointMidA}
            {\pgfmathrotatepointaround{\pgf@tothrough}{\pgf@topointMidA}{90}}
            {\pgf@topointMidB}{\pgfmathrotatepointaround{\pgf@tothrough}{\pgf@topointMidB}{90}}}%
        \pgfcoordinate{arc through center}{\pgf@tocenter}%
        \pgfpointdiff{\pgf@tocenter}{\pgf@tostart}%
        \pgfmathveclen@{\pgfmath@tonumber\pgf@x}{\pgfmath@tonumber\pgf@y}%
        \edef\pgf@toradius{\pgfmathresult pt}
        \pgfmathanglebetweenpoints{\pgf@tocenter}{\pgf@tostart}%
        \let\pgf@tostartangle\pgfmathresult
        \pgfmathanglebetweenpoints{\pgf@tocenter}{\pgf@totarget}%
        \let\pgf@toendangle\pgfmathresult
        \ifdim\pgf@tostartangle pt>\pgf@toendangle pt\relax
          \pgfmathsetmacro\pgf@tostartangle{\pgf@tostartangle-360}%
        \fi
        #2%
          \pgfmathsetmacro\pgf@toendangle{\pgf@toendangle-360}%
        \fi
      \endpgfextra
      arc [radius=+\pgf@toradius, start angle=\pgf@tostartangle, end angle=\pgf@toendangle] \tikztonodes
    }},
  arc through ccw/.style={@arc through={#1}{\iffalse}},
  arc through cw/.style={@arc through={#1}{\iftrue}},
}
\tikzset{middlearrow/.style={
        decoration={markings,
            mark= at position 0.5 with {\arrow{#1}} ,
        },
        postaction={decorate}
    }
}
\def\th@plain{\slshape}                                        %
\newcommand{\Cbb}{\mathbb{C}}
\newcommand{\Rbb}{\mathbb{R}}
\newcommand{\Zbb}{\mathbb{Z}}
\newcommand{\p}{_{\ge0}}
\newcommand{\pp}{_{>0}}
\newcommand{\m}{^{-1}}
\newcommand{\argomento}{\,\operatorname{--}\,}
\newcommand{\tr}{\mathrm{tr}}
\newcommand{\mc}[1]{\mathcal{#1}}
\newcommand{\abs}[1]{\lvert#1\rvert}
\newcommand{\norm}[1]{\lVert#1\rVert}
\newcommand{\newword}[1]{\textsl{#1}}
\newcommand{\set}[1]{\{ #1 \}}
\newcommand{\ttr}[1]{[#1]}
\newcommand{\cppvector}[2]{\bigl(\begin{smallmatrix}#1\\#2\end{smallmatrix}\bigr)}
\newcommand{\ppmatrix}[4]{\bigl(\begin{smallmatrix}#1&#2\\#3&#4\end{smallmatrix}\bigr)}
\DeclareMathOperator{\PP}{P}
\DeclareMathOperator{\SL}{SL}
\DeclareMathOperator{\im}{im}
\DeclareMathOperator{\arccosh}{arccosh}
\theoremstyle{plain}
\newtheorem{theorem}{Theorem}[section]
\newtheorem{lemma}[theorem]{Lemma}
\theoremstyle{definition}
\newtheorem{definition}[theorem]{Definition}
\newtheorem{example}[theorem]{Example}
\numberwithin{equation}{section}
\begin{document}

\bibliographystyle{plain}

\sloppy

\title[The finiteness conjecture]{The finiteness conjecture holds in
$(\SL_2\Zbb\p)^2$}

\author[G.~Panti, D.~Sclosa]{Giovanni Panti and Davide Sclosa}
\address{Department of Mathematics, Computer Science and Physics\\
University of Udine\\
via delle Scienze 206\\
33100 Udine, Italy}
\email{giovanni.panti@uniud.it}
\email{davide.sclosa@gmail.com}

\begin{abstract}
Let $A,B$ be matrices in $\SL_2\Rbb$ having trace greater than or equal to~$2$. Assume the pair $A,B$ is coherently oriented, that is, can be conjugated to a pair having nonnegative entries. Assume also that either $A,B\m$ is coherently oriented as well, or $A,B$ have integer entries.
Then 
the Lagarias-Wang finiteness conjecture holds for the set $\set{A,B}$, with optimal product in $\set{A,B,AB,A^2B,AB^2}$. In particular, it holds for every matrix pair in $\SL_2\Zbb\p$.
\end{abstract}

\thanks{\emph{2020 Math.~Subj.~Class.}: 05A05; 15A60; 37F32}
\thanks{The first author is partially supported by the 
MIUR Grant E83C18000100006 \emph{Regular and stochastic behaviour in dynamical systems}.}

\maketitle

\section{Introduction}\label{ref1}

Given a finite set $\Sigma$ of square matrices of the same dimension and with real entries, the \newword{joint spectral radius} of $\Sigma$ is
\[
\tilde\rho(\Sigma)=\lim_{n\to\infty}\max\set{\norm{C}:C\in\Sigma^n},
\]
where $\Sigma^n$ is the set of all products of $n$ matrices from $\Sigma$, repetitions allowed, and $\norm{\argomento}$ is the operator norm induced from some vector norm, whose choice is irrelevant.
In short, $\tilde\rho(\Sigma)$ measures the maximal exponential growth rate of vectors under the action of $\Sigma$. Its range of applicability is large and still growing; we refer to~\cite{jungers09}, \cite{guglielmizennaro14} and references therein for a broad panorama and proofs of basic statements.

Despite its simple definition, the computation of the joint spectral radius is a notoriously difficult problem (indeed it is NP-hard~\cite{tsitsiklisblondel97}), even in the restricted form of just determining whether it is nonzero. By the Berger-Wang theorem we have the equivalent characterization
\begin{equation}\label{eq2}
\tilde\rho(\Sigma)=\sup_n\max\set{\rho(C)^{1/n}:C\in\Sigma^n},
\end{equation}
where $\rho(C)$ is the spectral radius of $C$, and in~\cite[p.~19]{lagariaswang95} Lagarias and Wang put forward the \newword{finiteness conjecture}, namely the possibility that the supremum in~\eqref{eq2} is always a maximum. Although in its full generality the conjecture was refuted in~\cite{bouschmairesse02}, counterexamples are difficult to construct, and are widely believed to be rare. The complexity of the matter already appears in the simplest setting, namely sets~$\Sigma$ containing just two $2\times2$ matrices. Indeed, such sets appear in the literature both as finiteness counterexamples~\cite{blondeltheysvladimirov03}, \cite{heremorrissidorovtheys11},
\cite{jenkinsonpollicott18}, \cite{oregon-reyes18},
as well as families of finiteness examples~\cite{jungersblondel08}, \cite{cicone_et_al10}, \cite{kozyakin16}.

In this paper we deal with sets $\Sigma=\set{A,B}$ of matrices in $\SL_2\Rbb$, the group of $2\times2$ matrices with real entries and determinant one. Such matrices act on the hyperbolic plane $\mc{H}=\set{z\in\Cbb:\im z>0}$ via M\"obius isometries $\ppmatrix{a}{b}{c}{d}*z=(az+b)/(cz+d)$, and whenever the group $\Gamma$ generated by $\Sigma$ is fuchsian (i.e., acts on $\mc{H}$ in a properly discontinuous way) the quotient $X=\Gamma\backslash\mc{H}$ is a complete hyperbolic surface. In this case, asking about the joint spectral radius of $\Sigma$ amounts to asking about the maximal \newword{mean free path} along closed geodesics on $X$, namely about the maximal mean time interval between successive crossings of fixed cuts of~$X$ (corresponding to the generators $A,B$ of $\Gamma$) that can be realized among closed geodesics; see~\cite[\S10]{panti20b}. This geometric point of view appears also in~\cite[\S6]{breuillardsert}, where it is discussed the case of two hyperbolic translations well oriented and with disjoint axes (corresponding to $X$ being a pair of pants, provided the two axes are sufficiently far apart). It also appears in~\cite{gekhtmantaylortiozzo19}, although the authors are concerned there with the limiting distribution of mean free paths (which turns out to be gaussian), rather than with their maximal value.

We summarize our results as follows, referring to the following sections for detailed statements.
Fix $\Sigma=\set{A,B}\subset\SL_2\Rbb$ with $\tr(A),\tr(B)\ge2$. We say that $C\in\Sigma^n$ is an \newword{optimal product} if $\tilde\rho(\Sigma)=\rho(C)^{1/n}$ and for no $1\le k<n$ and $D\in\Sigma^k$ we have $\tilde\rho(\Sigma)=\rho(D)^{1/k}$. The existence of optimal products amounts  to the validity of the finiteness conjecture; their  uniqueness ---which may or may not hold--- is intended up to conjugation.
We assume that $A,B$ are coherently oriented; this is a geometric condition (see Definition~\ref{ref2}) that turns out to be equivalent to the fact that $A,B$ are simultaneously conjugate to a pair of nonnegative matrices.
Discarding the trivial case in which $A$ and $B$ commute and hence are simultaneously diagonalizable (or triangularizable, if parabolic),
we will prove the following results.

\begin{itemize}
\item[(I)] If $A$ and $B$ are hyperbolic with intersecting axes, then the one with larger trace is the unique optimal product. If they have the same trace, they are both optimal.
\item[(II)] If $A$ and $B$ are hyperbolic with asymptotically parallel axes, then the one with larger trace is the unique optimal product (this also holds if one of the two is parabolic with fixed point equal to one of the two fixed points of the other). If they have the same trace, then:
\begin{itemize}
\item[(II.1)] If the attracting fixed point of one of the two is repelling for the other, then $A$ and $B$ are both optimal, and no other product is optimal;
\item[(II.2)] Otherwise, 
every product which is not a power is optimal.
\end{itemize}
\item[(III)] If $A$ and $B$ have the same trace and the pair $A,B\m$ is \emph{not} coherently oriented, then $AB$ is the unique optimal product.
\item[(IV)] The above statements leave uncovered the cases in which $A,B$ are both hyperbolic with different trace and ultraparallel axes, or one of the two is parabolic with fixed point distinct from the two fixed points of the other. Assume we are in one of these cases with $\tr(A)<\tr(B)$, and further assume that $A$ and $B$ have integer entries.
\begin{itemize}
\item[(IV.1)] If $\tr(AB)<\tr(B^2)$, then $B$ is the unique optimal product;
\item[(IV.2)] If $\tr(AB)=\tr(B^2)$, then $AB^2$ is the unique optimal product;
\item[(IV.3)] If $\tr(AB) > \tr(B^2)$, then $\tr((AB)^3)$ and $\tr((AB^2)^2)$ differ at least by~$2$; if the former is larger, then $AB$ is the unique optimal product, otherwise so is $AB^2$;
\item[(IV.4)] The statements (IV.1), (IV.2), (IV.3) are false if the assumption about integer entries is dropped.
\end{itemize}
\end{itemize}
Putting together the above statements, we obtain the result stated in the abstract.

This paper is organized as follows: in~\S\ref{ref14} we give the definition of coherent orientation for pairs of nonelliptic matrices in $\SL_2\Rbb$ and prove the equivalence alluded to above. We then establish
in Theorem~\ref{ref3} inequalities relating the translation length of a matrix product with the sum of the translation lengths of the factors. In~\S\ref{ref4} we recast the finiteness property in terms of the existence of maximal elements for a certain preorder defined in the free semigroup on two generators; this interpretation allows us to replace optimal matrix products with better behaved optimal words. We prove statements (I), (II), (III) above in Theorems~\ref{ref7}, \ref{ref9}, and \ref{ref10}. In~\S\ref{ref11} we provide counterexamples and settle~(IV.4). In~\S\ref{ref12} we restrict attention to integer matrices and move from geometric arguments to combinatorial ones, establishing~(IV.1) in Theorem~\ref{ref15}.
The statements (IV.2) and (IV.3) are more involved, requiring a section each, and are established in Theorems~\ref{ref16} and~\ref{ref18}.

\section{Coherently oriented nonelliptic matrices}\label{ref14}

The M\"obius action of $\SL_2\Rbb$ cited in \S\ref{ref1} extends naturally to the euclidean boundary of the hyperbolic plane, namely the real projective line $\PP^1\Rbb=\partial\mc{H}$. A nonidentity matrix $A\in\SL_2\Rbb$ is then \newword{elliptic}, \newword{parabolic}, or \newword{hyperbolic} according to the number of fixed points ---either zero, one, or two--- it has in $\partial\mc{H}$; equivalently, according to the absolute value of its trace being less than, equal to, or greater than~$2$. 
Note that replacing $A$ with $-A$ does not change the action and is irrelevant with respect to anything related to spectral radii.
If $A$ is hyperbolic, one of its fixed points is attracting and we will be denoted by $\alpha^+$, the repelling one being denoted~$\alpha^-$; similar conventions hold for other letters $B,C,\ldots$. If $A$ is parabolic, we agree that $\alpha^+=\alpha^-$ is the only fixed point of $A$.

Let $A$ be a nonidentity matrix in $\SL_2\Rbb$ of trace $\ge2$, and let $d$ denote hyperbolic distance (see~\cite{fenchel89} or~\cite{katok92} for basics of hyperbolic geometry). The \newword{translation length} of $A$ is
\[
\ell(A)=\inf\set{d(z,A*z):z\in\mc{H}}.
\]
It has value $0$ if and only if the infimum is not realized by any $z$, if and only if $A$ is parabolic. If $A$ is hyperbolic, then the set of points $z$ realizing the infimum are precisely those points that lie on the \newword{translation axis} of $A$, namely the unique geodesic of ideal endpoints $\alpha^+$ and
$\alpha^-$. For $A$ as above, spectral radius, trace, and translation length have neat relationships, namely
\begin{equation}\label{eq1}
\begin{aligned}
\rho &=\mathrm{tr}/2+\sqrt{(\mathrm{tr}/2)^2-1} = \exp(\ell/2),\\
\mathrm{tr} &=\rho+\rho\m = 2\cosh(\ell/2),\\
\ell &= 2\arccosh(\mathrm{tr}/2) = 2\log\rho.
\end{aligned}
\end{equation}

Since the functions involved in~\eqref{eq1} are order-preserving bijections between the intervals $[1,\infty)$ (for spectral radius), $[2,\infty)$ (for trace), and $[0,\infty)$ (for translation length), comparing nonelliptic matrices with respect to one of these characteristics is the same as comparing them with respect to any other. Moreover, for every $A,B\in\SL_2\Rbb$ with trace $\ge2$, we have $\rho(A)<\rho(B)$ if and only if $\rho(A^n)<\rho(B^n)$ for some (equivalently, for all) $n\ge1$, and the same statement holds for trace and for translation length.

We look at the ideal boundary $\partial\mc{H}$ as a topological circle, cyclically ordered by the ternary betweenness relation $\alpha\prec \beta\prec \gamma$, which reads ``$\alpha,\beta,\gamma$ are pairwise distinct, and traveling from $\alpha$ to~$\gamma$ counterclockwise we meet $\beta$''. Every pair of distinct points $\alpha,\beta$ determines two closed intervals, namely $[\alpha,\beta]=\set{\alpha,\beta}\cup\set{x:\alpha\prec x\prec \beta}$ and $[\beta,\alpha]=\set{\beta,\alpha}\cup\set{x:\beta\prec x\prec \alpha}$. 

\begin{definition}
Let\label{ref2} $A,B$ be noncommuting matrices in $\SL_2\Rbb$, both with trace greater than or equal to~$2$. If $\alpha^+=\beta^+$, let $I^+=\set{\alpha^+}$. If $\alpha^+\ne\beta^+$, let $I^+$ be the one, of the two intervals $[\alpha^+,\beta^+]$ and $[\beta^+,\alpha^+]$, which is mapped into itself by both $A$ and $B$, if such an interval exists (if it does then it is unique, since $AB\ne BA$ implies $\set{\alpha^+,\alpha^-}\ne\set{\beta^+,\beta^-}$). If such an interval does not exists, leave $I^+$ undefined. Replace in the above lines $A$, $B$ with $A\m$, $B\m$, and $\alpha^+,\beta^+$ with $\alpha^-,\beta^-$, obtaining the definition of~$I^-$. If both of $I^+$ and $I^-$ are defined, then we say that the pair $A,B$ is
\newword{coherently oriented}.
If $A,B$ are coherently oriented, but $A,B\m$ are not, then we say that $A,B$ are \newword{well oriented}.
\end{definition}

It is clear that $A,B$ are coherently oriented if and only if so are $A\m,B\m$. Coherently oriented hyperbolic pairs with ultraparallel axes are necessarily well oriented; see Example~\ref{ref20} and Figure~\ref{fig3} for taxonomy.

\begin{lemma}
Let\label{ref19} $A,B$ be noncommuting matrices in $\SL_2\Rbb$, both with trace $\ge2$. Then they are coherently oriented if and only if there exists $C\in\SL_2\Rbb$ such that $CAC\m$ and $CBC\m$ have nonnegative entries.
\end{lemma}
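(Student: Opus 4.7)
My plan is to reformulate nonnegativity as preservation of the first-quadrant cone $K_0=\set{(x,y):x,y\ge0}$ in $\Rbb^2$, so that the lemma becomes the statement that coherent orientation is equivalent to the existence of an $A,B$-invariant proper closed cone in $\Rbb^2$. Given such a cone, the conjugator $C$ can be built by sending its two bounding rays to $(1,0)$ and $(0,1)$.

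For the easy direction, I will suppose that $CAC\m$ and $CBC\m$ have nonnegative entries. They preserve $K_0$, hence the positive-real arc $[0,\infty]\subset\PP^1\Rbb$; their inverses correspondingly preserve the negative-real arc $[\infty,0]$. Perron--Frobenius locates the attracting fixed points in $[0,\infty]$ and the repelling ones in $[\infty,0]$, and the dynamics on each arc push points towards the attractor or repellor lying on it. Hence the sub-arc of $[0,\infty]$ between the two attractors is $CAC\m,CBC\m$-invariant, and the sub-arc of $[\infty,0]$ between the two repellors is invariant under their inverses; these are the intervals $I^+$ and $I^-$ for the conjugated pair, and transporting back by $C\m$ gives coherent orientation of $A,B$.

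For the converse, the first step is to produce an $A,B$-invariant closed proper arc $J\subsetneq\PP^1\Rbb$ with two distinct endpoints that are fixed points of $A$ or $B$. Since $AB\ne BA$, the pair cannot share both fixed points, so at least one of $I^+,I^-$ is a proper arc: if $I^+$ is, I set $J=I^+$; otherwise $\alpha^+=\beta^+$ (and necessarily $\alpha^-\ne\beta^-$), and I take $J$ to be the closure of $\PP^1\Rbb\setminus I^-$, which is $A,B$-invariant because $I^-$ is $A\m,B\m$-invariant, and which has endpoints $\alpha^-,\beta^-$. The second step lifts $J$ to one of the two closed cone preimages $K\subset\Rbb^2$. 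Since $\tr A,\tr B\ge2$ all four eigenvalues are strictly positive, so each bounding ray of $K$ (projecting to a fixed point of either $A$ or $B$) is setwise fixed by the corresponding matrix. A connectedness argument then promotes $A(J)\subseteq J$ to $A(K)\subseteq K$: the image $A(K\setminus\set{0})$ is a connected subset of the disjoint union $(K\setminus\set{0})\cup(-K\setminus\set{0})$ and contains a point on the $A$-fixed boundary ray, so it lies entirely in $K$; likewise for $B$. Finally I choose bounding vectors $u,v$ of $K$ in counterclockwise order, rescale by positive factors so that $\det[u\,|\,v]=1$, and set $C\m=[u\,|\,v]\in\SL_2\Rbb$; then $C$ sends $K$ onto $K_0$, so $CAC\m$ and $CBC\m$ preserve $K_0$ and therefore have nonnegative entries.

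The main obstacle is the upgrade from ``$A$-invariant arc $J$'' to ``$A$-invariant cone $K$'': a priori $\SL_2\Rbb$ could act on the pair of lifts $\set{K,-K}$ by swapping them, and the positivity of the eigenvalues coming from $\tr\ge2$ (rather than just $\abs{\tr}\ge2$) is exactly what rules out this swap. A secondary bookkeeping issue is the case analysis in the degenerate configurations $\alpha^+=\beta^+$ or $\alpha^-=\beta^-$, where one of the intervals collapses to a point and must be replaced by the closure of the complement of the other.
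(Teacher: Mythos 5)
Your proof is correct and takes a genuinely different route from the paper's. The paper stays on the projective line throughout: after choosing $C$ with $C[I^+]=[0,\infty]$, it observes that invariance of $[0,\infty]$ under $D=CAC\m$ forces the two columns of $D$ to each carry a common sign, while invariance of $[\infty,0]$ under $D\m$, read through conjugation by $S=\ppmatrix{}{-1}{1}{}$ (which turns $D\m$ into the transpose of $D$), forces the two rows to each carry a common sign; the hypothesis $\tr D\ge2>0$ then pins the common sign as positive. You instead lift to the linear action on $\Rbb^2$, upgrade the invariant arc $J$ to an invariant salient cone $K$, and rule out the $K\leftrightarrow -K$ flip by the connectedness of $K\setminus\set{0}$ together with the positivity of all four eigenvalues — which is exactly where your argument uses $\tr\ge2$ rather than merely $\abs{\tr}\ge2$. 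So both proofs invoke $\tr\ge2$ for the same purpose, but at different levels: the paper to fix the sign of the entries, you to fix the cone. Your easy direction invokes Perron--Frobenius explicitly where the paper merely records the location of the fixed points; and your case split when $I^+$ degenerates to a singleton (replacing it by the closure of the complement of $I^-$) plays the same role as the paper's ``at least one of $I^+,I^-$ is not a singleton, say $I^+$'' reduction. The paper's transpose trick is a bit shorter; your cone picture is more conceptual and makes clearer why $\tr\ge2$ is the right hypothesis.
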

\begin{proof}
Assume $A,B$ are coherently oriented with $I^+,I^-$ as in Definition~\ref{ref2}. Since $\set{\alpha^+,\alpha^-}\ne\set{\beta^+,\beta^-}$, at least one of $I^+,I^-$ is not a singleton, say $I^+$. Let $K$ be the closure of the complement of $I^+$. Then $\alpha^-,\beta^-\in K$; indeed if, say, $\alpha^-$ were in the interior of $I^+$ we would have $A*\beta^+\notin I^+$, which is impossible.
This fact implies that $A\m[K]\cup B\m[K]\subseteq K$.
Let now $C$ be any matrix in $\SL_2\Rbb$ such that $C[I^+]=[0,\infty]$. Setting $D=CAC\m$ and $E=CBC\m$ we obtain
$D[0,\infty]\cup E[0,\infty]\subseteq [0,\infty]$
and
$D\m[\infty,0]\cup E\m[\infty,0]\subseteq [\infty,0]$.
Write $D=\ppmatrix{a}{b}{c}{d}$; we want to prove that $a,b,c,d\ge0$. Since $D[0,\infty]\subseteq[0,\infty]$, we have that $a$ and $c$ have the same sign, and so do $b$ and $d$. The involution $S=\ppmatrix{}{-1}{1}{}$ exchanges $[0,\infty]$ with $[\infty,0]$. As a consequence,
$SD\m S\m=\ppmatrix{a}{c}{b}{d}$ maps $[0,\infty]$ into itself, which implies that $a$ and $b$ have the same sign, and so do $c$ and $d$.
We conclude that all of $a,b,c,d$ have the same sign, which must be positive, since $\tr(D)\ge2$; the same argument works for $E$.

Conversely, let $A,B$ have nonnegative entries; then $\alpha^+,\beta^+\in[0,\infty]$ and $\alpha^-,\beta^-\in[\infty,0]$. Taking $I^+$ to be the interval of endpoints $\alpha^+,\beta^+$ which is contained in $[0,\infty]$, and analogously for $I^-$, we see that $A,B$ satisfy the conditions of coherent orientation, which are plainly preserved under conjugation.
\end{proof}

\begin{example}
Consider\label{ref20} the following matrices, where zero entries are replaced by spaces:
\begin{gather*}
C=\begin{pmatrix}
9 & 8\\
1 & 1
\end{pmatrix},\quad
D=\begin{pmatrix}
5 & -1\\
1 & 
\end{pmatrix},\quad
E=\frac{1}{10}\begin{pmatrix}
17-2\sqrt{6} & -12+2\sqrt{6} \\
-3-2\sqrt{6} & 8+2\sqrt{6}
\end{pmatrix},\\
G=\begin{pmatrix}
5 & -4\\
4 & -3
\end{pmatrix},\quad
L=\begin{pmatrix}
1 & \\
1 & 1
\end{pmatrix}.
\end{gather*}
We draw in Figure~\ref{fig3} the oriented translation axes of the hyperbolic $C,D,E$, as well as oriented horocycles corresponding to the parabolic $G,L$; note that although we work in the upper-plane model~$\mc{H}$, we draw pictures in the Poincar\'e disk model.

\begin{figure}
\begin{tikzpicture}[scale=2.4]
\coordinate (cp) at (0.2219422, 0.97505980);
\coordinate (cm) at (-0.994356, -0.10609428);

\coordinate (ccontrol) at (0.05555949, 0.56253986);
\coordinate (decontrol) at (0.2087727, -0.01665920);

\coordinate (dp) at (0.3999999, 0.91651513);
\coordinate (dm) at (0.4, -0.9165151);
\coordinate (ep) at (-0.99435607, -0.1060942);
\coordinate (em) at (1,0);

\draw (0,0) circle [radius=1cm];
\draw[middlearrow={latex}] (cm) to[arc through ccw=(ccontrol)] (cp);
\draw[middlearrow={latex}] (dm) to[arc through cw=(decontrol)] (dp);
\draw[middlearrow={latex}] (em) to[arc through ccw=(decontrol)] (ep);
\draw[middlearrow={latex}] (em) arc (0:-360:0.16cm);
\draw[middlearrow={latex}] (0,-1) arc (270:-90:0.16cm);
\node[left] at (cm) {$\gamma^-=\epsilon^+$};
\node at ($(cp) + (0.03,0.14)$)    {$\gamma^+$};

\node at ($(dm) + (0.12,-0.12)$) {$\delta^-$};
\node at ($(dp) + (0.12,0.12)$) {$\delta^+$};

\node[right] at (em) {$\epsilon^-=1$};
\node[below] at (0,-1) {$0$};
\end{tikzpicture}
\caption{Examples of coherently oriented pairs}
\label{fig3}
\end{figure}
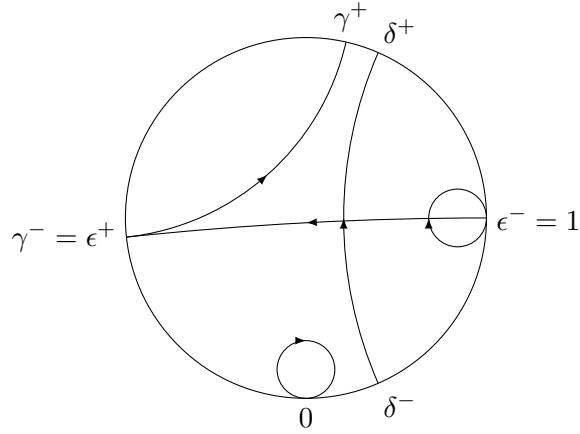

Direct checking shows that coherently oriented pairs can be classified in six subcases as follows, three of them being well oriented.
\begin{itemize}
\item The parabolic-parabolic case, which is necessarily well oriented. The pair $G\m,L$ above is an example (with $I^+=[0,1]$ and $I^-=[1,0]$); note that $G,L$ are not coherently oriented. This case is covered by
Theorem~\ref{ref10}.
\item The parabolic-hyperbolic case, which splits in two. 
A pair may be well oriented (e.g., $C,G$ with $I^+=[1,\gamma^+]$, $I^-=[\gamma^-,1]$), or coherently oriented but not well oriented (e.g., $E,G$).
The first subcase is covered by Theorems~\ref{ref15}, \ref{ref16}, \ref{ref18}, and the second by Theorem~\ref{ref9}.
\item The hyperbolic-hyperbolic case. This splits in three, the subcases of intersecting (such as $D^{\pm1},E^{\pm1}$) or asymptotically parallel axes (such as $C^{\pm1},E^{\pm1}$) being not well oriented. The remaining case, exemplified by $C,D$, is of course well oriented. The first subcase is covered by Theorem~\ref{ref7}, the second by Theorem~\ref{ref9}, and the third by Theorems~\ref{ref15}, \ref{ref16}, \ref{ref18}.
\end{itemize}

We remark that if two matrices in $\SL_2\Zbb$ are coherently oriented, it is not necessarily true that they can be conjugated to a pair with nonnegative \emph{integer} entries; the pair $C,D$ above is one such example.
\end{example}

\begin{lemma}
Let\label{ref8} $A,B$ have trace $\ge2$, and assume they are coherently oriented; let $C$ be a product of $A$ and $B$.
\begin{enumerate}
\item We have $\tr(C)\ge2$, $\gamma^+\in I^+$, $\gamma^-\in I^-$; in particular, if $I^+\cap I^-=\emptyset$ then $C$ is hyperbolic.
\item If $\alpha^+\ne\beta^+$ and $C$ is not a power of $B$, then $\gamma^+\ne\beta^+$; an analogous statement holds for repelling fixed points.
\end{enumerate}
\end{lemma}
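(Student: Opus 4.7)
The plan is to reduce, via Lemma~\ref{ref19}, to the case where $A$ and $B$ have nonnegative entries---a conjugation that preserves every quantity in the statement---and then exploit the stability of $I^+$ (resp.~$I^-$) under the semigroup generated by $\{A,B\}$ (resp.~$\{A\m,B\m\}$).

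For part~(1), both sub-claims are then routine. If $C=\ppmatrix{a}{b}{c}{d}$ has nonnegative entries and $\det C=1$, then $ad=1+bc\geq 1$, so AM--GM gives $\tr(C)=a+d\geq 2\sqrt{ad}\geq 2$. For the fixed-point containment, $I^+$ is mapped into itself by both $A$ and $B$ by definition, hence also by $C$; since $I^+$ is a closed arc of $\partial\mc{H}$, the continuous self-map $C|_{I^+}$ has a fixed point. Iterating $C$ on any non-fixed interior point of $I^+$, the orbit stays in $I^+$ and converges to $\gamma^+$, so $\gamma^+\in I^+$. The same argument with $C\m$ on $I^-$ gives $\gamma^-\in I^-$. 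If $I^+\cap I^-=\emptyset$ then $\gamma^+\ne\gamma^-$, so $C$ has two distinct boundary fixed points and is hyperbolic.

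For part~(2), I would argue by induction on the length of $C$, reading letters from the left. Write $C=X_1 D'$ with $X_1\in\{A,B\}$ and $D'$ of length one less; the hypothesis $C(\beta^+)=\beta^+$ becomes $D'(\beta^+)=X_1\m(\beta^+)$. If $X_1=B$, then $D'(\beta^+)=\beta^+$, so $D'$ fixes $\beta^+$ and, by the inductive hypothesis, is a power of $B$, making $C=BD'$ a power of $B$---contradicting the hypothesis. If $X_1=A$, then $D'(\beta^+)=A\m(\beta^+)$; under $\beta^+\notin\{\alpha^+,\alpha^-\}$ the image $A[I^+]$ is a proper closed sub-arc of $I^+$ (it contains the fixed endpoint $\alpha^+$ and has $A(\beta^+)$ as its other endpoint, strictly interior to $I^+$), so $A\m(\beta^+)\notin I^+$. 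But $D'$, being a word in $A$ and $B$, maps $I^+$ into itself, forcing $D'(\beta^+)\in I^+$---contradicting $D'(\beta^+)=A\m(\beta^+)\notin I^+$. The analogous statement for $\gamma^-$ follows by the symmetric argument on $I^-$ using stability under $A\m,B\m$.

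The main obstacle is the degenerate subcase $\beta^+=\alpha^-$, where the attracting fixed point of $B$ coincides with the repelling fixed point of $A$---the asymptotically-parallel hyperbolic--hyperbolic configuration of Example~\ref{ref20}. In that case $A$ also fixes $\beta^+$, so $A\m(\beta^+)=\beta^+\in I^+$ and the dichotomy above collapses; resolving this subcase needs finer input, such as analysing the multiplier of $C$ at the shared fixed point (a product of the letter-wise multipliers at $\beta^+$) together with the additional structural constraints on $I^+$ imposed by coherent orientation.
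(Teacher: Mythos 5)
Your part~(1) is correct and takes the same route as the paper (invariance of $I^+$, nested iterated images, then symmetry for $I^-$), with a pleasant shortcut: the AM--GM bound $\tr(C)=a+d\ge2\sqrt{ad}\ge2$ for a nonnegative determinant-one matrix replaces the paper's appeal to nonellipticity via the existence of a boundary fixed point. For part~(2) you read the word from the left and induct, while the paper writes $C=DAB^k$ (reading from the right) and tracks that $\beta^+$, an endpoint of $B^k[I^+]$, is expelled from $I^+$ by the single $A$ and never re-enters; these are two bookkeepings of the same fact, that $A$ moves $\beta^+$ strictly into the interior of $I^+$ once $\beta^+\ne\alpha^\pm$.

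Your flag on the subcase $\beta^+=\alpha^-$ is a real gap, and in fact it is one the paper's proof shares silently: the step ``$\beta^+$ does not belong to $AB^k[I^+]$'' fails outright when $A$ fixes $\beta^+$. Moreover, the multiplier analysis you suggest does not rescue the statement---it refutes it. Take $A=C$, $B=E$ from Example~\ref{ref20}, so that $\alpha^-=\beta^+$. Every product of $A$ and $B$ fixes the common point $\beta^+$, and the multiplier of $AB^k$ there is $\rho(A)^{2}\rho(B)^{-2k}$, which falls below~$1$ once $k$ is large; thus $\gamma^+_{AB^k}=\beta^+$ although $AB^k$ is not a power of $B$. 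So the lemma is slightly overstated: part~(2) really needs $\beta^+\notin\set{\alpha^+,\alpha^-}$ (and symmetrically for the repelling statement), which for hyperbolic pairs amounts to $I^+\cap I^-=\emptyset$. The slip is harmless for the paper, since Lemma~\ref{ref8}(2) is invoked only under $I^+\cap I^-=\emptyset$ (in Theorems~\ref{ref3} and~\ref{ref7}) or with $A$ parabolic and $\alpha^+\ne\beta^+$ (in Lemma~\ref{ref13}(2)), both of which rule out $\alpha^-=\beta^+$; but your instinct to isolate that configuration, rather than to paper over it, was correct.
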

\begin{proof}
(1) Surely $\tr(C)>0$ by Lemma~\ref{ref19}. Since $A[I^+]\cup B[I^+]\subseteq I^+$ we have $C[I^+]\subseteq I^+$ and a descending chain $I^+\supseteq C[I^+]\supseteq C^2[I^+]\supseteq\cdots$ that shrinks to $\gamma^+\in I^+$; thus $\tr(C)\ge2$. Inverting both $A$ and $B$ we get $\gamma^-\in I^-$.

(2) We have $C=DAB^k$, for some $k\ge0$ and some product $D$ of $A$ and $B$. Then $\beta^+$ is an endpoint of the interval $B^k[I^+]$, and does not belong to $AB^k[I^+]$. Any further application of $A$ and $B$ 
to $AB^k[I^+]$ leaves $\beta^+$ outside,
and therefore $\beta^+\notin C[I^+]$, which implies $\gamma^+\ne\beta^+$.
\end{proof}

\begin{theorem}
Let\label{ref3} $A,B$ be hyperbolic and coherently oriented  with $I^+\cap I^-=\emptyset$. Then $\ell(AB)$ is less than, equal to, or greater than $\ell(A)+\ell(B)$ if and only if the axes of $A$ and $B$ are intersecting, asymptotically parallel, or ultraparallel, respectively.
\end{theorem}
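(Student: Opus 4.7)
My plan is to reduce the three-way comparison of $\ell(AB)$ with $\ell(A)+\ell(B)$ to a single trace inequality involving $\tr(AB^{-1})$, then verify it by a direct computation in a canonical normal form.

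First I would invoke the $\SL_2\Rbb$ trace identity $\tr(AB)+\tr(AB^{-1})=\tr(A)\tr(B)$ and substitute $\tr(X)=2\cosh(\ell(X)/2)$. By Lemma~\ref{ref8}(1) the hypothesis $I^+\cap I^-=\emptyset$ forces $\tr(AB)\ge 2$, so the substitution is legitimate for $A$, $B$ and $AB$. Expanding by $4\cosh(s)\cosh(t)=2\cosh(s+t)+2\cosh(s-t)$ rearranges the identity into
\[
2\cosh(\ell(AB)/2)-2\cosh\bigl((\ell(A)+\ell(B))/2\bigr) \;=\; 2\cosh\bigl(\abs{\ell(A)-\ell(B)}/2\bigr)-\tr(AB^{-1}).
\]
Since $\cosh$ is strictly increasing on $[0,\infty)$, the sign of the left-hand side equals the sign of $\ell(AB)-\ell(A)-\ell(B)$, and the theorem reduces to showing that $\tr(AB^{-1})$ is above, equal to, or below the threshold $2\cosh(\abs{\ell(A)-\ell(B)}/2)$ precisely in the intersecting, asymptotically parallel, or ultraparallel case.

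For this I would conjugate so that $A=\ppmatrix{\lambda}{0}{0}{\lambda^{-1}}$ with $\lambda=e^{\ell(A)/2}$, placing $\alpha^+=\infty$ and $\alpha^-=0$. Writing $\beta^+=b$, $\beta^-=a$, one obtains $B$ by conjugating $\ppmatrix{\mu}{0}{0}{\mu^{-1}}$, $\mu=e^{\ell(B)/2}$, by the $\SL_2\Rbb$-matrix sending $\infty\mapsto b$ and $0\mapsto a$; a one-line matrix multiplication then yields
\[
\tr(AB^{-1}) - 2\cosh\bigl((\ell(A)-\ell(B))/2\bigr) \;=\; \frac{2a\,\bigl[\cosh((\ell(A)-\ell(B))/2)-\cosh((\ell(A)+\ell(B))/2)\bigr]}{b-a},
\]
in which the bracket is strictly negative (since $\ell(B)>0$). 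Hence the sign of the difference coincides with the sign of $-a/(b-a)$, and the proof is reduced to controlling the signs of $a$ and $b-a$ geometrically.

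The delicate point is exactly this last step: one must verify that the coherent-orientation condition with $I^+\cap I^-=\emptyset$ pins the sign pattern of $(a,b)$ down to the three configurations matching the trichotomy, namely $0<a<b$ for ultraparallel axes, $a=0$ or $b=\infty$ for a shared ideal fixed point, and $a<0<b$ for intersecting axes, and in particular excludes degenerate configurations such as $0<b<a$ in which neither arc from $\infty$ to $b$ is invariant under both $A$ and $B$, so that $I^+$ would fail to exist. This is a brief cyclic-order check on $\partial\mc{H}$; the shared-fixed-point cases can then be handled either by continuity from the displayed formula or by a direct upper-triangular computation with parabolic/hyperbolic generators fixing $\infty$.
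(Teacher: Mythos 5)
Your argument takes a genuinely different route from the paper's. The paper works synthetically: it decomposes each of $A$ and $B$ into a product of two reflections sharing a common mirror, so that $AB$ collapses to a product of two reflections whose translation length is read off from the (point or line) distance between the remaining mirrors; the three cases then follow from the triangle inequality, a direct upper-triangular computation, and Coxeter's ``non-triangle inequality'' for ultraparallel geodesics. Your route is algebraic: the trace identity $\tr(AB)+\tr(AB^{-1})=\tr(A)\tr(B)$ together with $\tr=2\cosh(\ell/2)$ reduces the trichotomy to comparing $\tr(AB^{-1})$ with $2\cosh(\abs{\ell(A)-\ell(B)}/2)$, and the normal-form computation turns that comparison into the sign of $a/(b-a)$. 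Both of your displayed formulas check out, and you correctly flag that Lemma~\ref{ref8}(1) supplies $\tr(AB)\ge 2$, which is the one place before the final step where coherent orientation is used.

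One caveat at the ``delicate point.'' The list of allowed sign patterns you give --- $0<a<b$ (ultraparallel), $a<0<b$ (intersecting), plus the boundary cases --- is incomplete. With $\alpha^+=\infty$, $\alpha^-=0$ fixed, the configurations $b<a<0$ (still ultraparallel) and $b<0<a$ (still intersecting) are also compatible with coherent orientation and $I^+\cap I^-=\emptyset$; only $0<b<a$ and $a<b<0$ are forbidden. This costs you nothing, because in the two missing configurations the numerator $a$ and the denominator $b-a$ both flip sign relative to the ones you list, so $a/(b-a)$ retains the required sign; but the cyclic-order check you plan must cover them, or else be preceded by a $\GL_2\Rbb$-conjugation by $\mathrm{diag}(-1,1)$ (which fixes the diagonal $A$ and sends $(a,b)\mapsto(-a,-b)$) that normalizes to your two generic cases. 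The paper's reflection argument sidesteps this bookkeeping entirely, which is one thing the geometric route buys; your route, in exchange, makes it explicit exactly which sign patterns the orientation hypothesis excludes and why the conclusion fails without it.
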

\begin{proof}
Assume that the axes are asymptotically parallel. Then, possibly replacing $A,B$ with $A\m,B\m$, we may conjugate and assume $\alpha^+=\beta^+=\infty$ (since $I^+$ and $I^-$ do not intersect, $\alpha^+=\beta^-$ is excluded). We then have
\[
A=\begin{pmatrix}
r & t\\
& r\m
\end{pmatrix},\quad
B=\begin{pmatrix}
s & u\\
& s\m
\end{pmatrix},
\]
with $r,s>1$. Since $\ell(A)=2\log r$, and analogously for $B$ and $AB$, we obtain $\ell(AB)=\ell(A)+\ell(B)$.

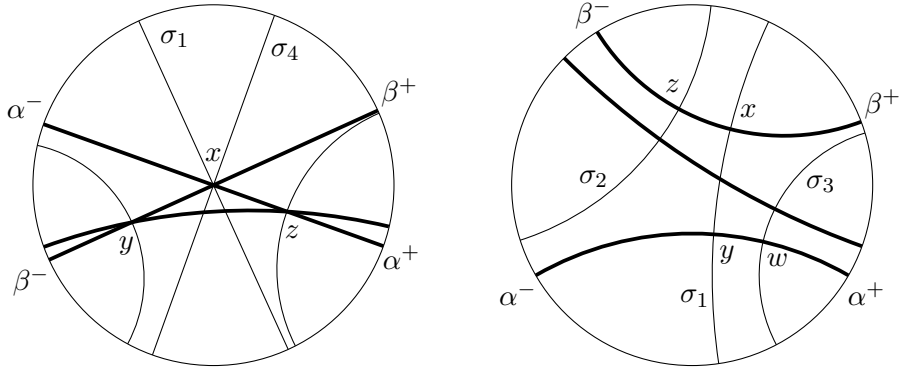
\begin{figure}
\begin{tikzpicture}[scale=2.4]
\coordinate (cp) at (0.94115,-0.33796);
\coordinate (cm) at (-0.94115,0.33796);
\coordinate (dp) at (0.91036,0.41380);
\coordinate (dm) at (-0.91036,-0.41380);
\coordinate (x) at (0,0);
\coordinate (em) at (-0.93956,-0.34236);
\coordinate (ep) at (0.97341,-0.22904);
\coordinate (p1) at (-0.33796,-0.94115);
\coordinate (p3) at (0.33796,0.94115);
\coordinate (p2) at (0.41380, -0.91036);
\coordinate (p4) at (-0.41380, 0.91036);
\coordinate (q2) at (0.91697,0.39892);
\coordinate (q1) at (0.45372,-0.89113);
\coordinate (q3) at (-0.97537,0.22055);
\coordinate (q4) at (-0.47517,-0.87988);
\coordinate (y) at (-0.45205,-0.20547);
\coordinate (z) at (0.40665,-0.14602);

\draw (0,0) circle [radius=1cm];
\draw[line width=0.5mm] (cm) to (cp);
\draw[line width=0.5mm] (dm) to (dp);
\draw[line width=0.5mm] (em) to[arc through cw=(y)] (ep);
\draw (q3) to[arc through cw=(y)] (q4);
\draw (q1) to[arc through cw=(z)] (q2);
\draw (p1) to (p3);
\draw (p2) to (p4);

\node at ($(x) + (0,0.17)$) {$x$};
\node at ($(y) + (-0.03,-0.13)$) {$y$};
\node at ($(z) + (0.03,-0.11)$) {$z$};
\node at ($(cp) + (0.1,-0.06)$) {$\alpha^+$};
\node at ($(dp) + (0.12,0.1)$) {$\beta^+$};
\node at ($(cm) + (-0.1,0.1)$) {$\alpha^-$};
\node at ($(dm) + (-0.1,-0.12)$) {$\beta^-$};
\node at ($(p4) + (0.2,-0.1)$) {$\sigma_1$};
\node at ($(p3) + (0.06,-0.2)$) {$\sigma_4$};
\end{tikzpicture}
\hspace{0.6cm}
\begin{tikzpicture}[scale=2.4]
\coordinate (cp) at (0.86602,-0.5);
\coordinate (cm) at (-0.86602,-0.5);
\coordinate (dp) at (0.93629,0.35121);
\coordinate (dm) at (-0.52662,0.85);
\coordinate (em) at (-0.70912,0.70508);
\coordinate (ep) at (0.94089,-0.33869);

\coordinate (x) at (0.2114481, 0.310447);
\coordinate (q2) at (0.9572039, 0.2894142);
\coordinate (q1) at (0.466348,-0.884601);
\coordinate (q3) at (0.10452, 0.994522);
\coordinate (q4) at (-0.952337,- 0.3050460);
\coordinate (y) at (0.154240, 0.033880);
\coordinate (z) at (0.11888,- 0.272033);
\coordinate (w) at (0.39264, - 0.31304);
\coordinate (t) at (-0.073642, 0.4218477);
\coordinate (u) at (0.14726, - 0.98909);
\coordinate (v) at (0.42332, 0.90598);

\draw (0,0) circle [radius=1cm];
\draw[line width=0.5mm] (cm) to[arc through cw=(z)] (cp);
\draw[line width=0.5mm] (dm) to[arc through ccw=(x)] (dp);
\draw[line width=0.5mm] (em) to[arc through ccw=(y)] (ep);
\draw (q1) to[arc through cw=(w)] (q2);
\draw (q4) to[arc through ccw=(t)] (q3);
\draw (u) to[arc through cw=(y)] (v);
\node at ($(cp) + (0.1,-0.1)$) {$\alpha^+$};
\node at ($(dp) + (0.12,0.1)$) {$\beta^+$};
\node at ($(cm) + (-0.1,-0.1)$) {$\alpha^-$};
\node at ($(dm) + (-0.02,0.1)$) {$\beta^-$};
\node at ($(x) + (0.1,0.1)$) {$x$};
\node at ($(w) + (0.08,-0.1)$) {$w$};
\node at ($(y) + (0.04,-0.42)$) {$y$};
\node at ($(x) + (-0.32,0.24)$) {$z$};
\node at ($(y) + (-0.7,0)$) {$\sigma_2$};
\node at ($(y) + (-0.14,-0.66)$) {$\sigma_1$};
\node at ($(w) + (0.32,0.32)$) {$\sigma_3$};

\end{tikzpicture}

\caption{Coherently oriented geodesics, intersecting case left, nonintersecting right}
\label{fig1}
\end{figure}

Assume the axes intersect; denote by $x$ the intersection point, by $y$ the point at distance $\ell(B)/2$ from $x$ moving towards $\beta^-$, and by $z$ the point at distance $\ell(A)/2$ from $x$ moving towards $\alpha^+$. We sketch the situation in Figure~\ref{fig1} left. Let $\sigma_1$ and $\sigma_2$ be the geodesics perpendicular to the axis $\sigma_3$ of $B$ and passing through $x$ and $y$, respectively. Also, let $\sigma_4$ and $\sigma_5$ be the geodesics perpendicular to the axis $\sigma_6$ of $A$ and passing through $x$ and $z$, respectively. For each $i=1,\ldots,6$, the reflection $S_i$ through $\sigma_i$ is an isometric involution of $\mc{H}$, and we have $A=S_5S_4$ and $B=S_1S_2$. The composition $S_4S_1$ is a rotation about $x$, and equals the composition $S_6S_3$, because the pair $(\sigma_4,\sigma_1)$ is mapped to $(\sigma_6,\sigma_3)$ by a rotation of $\pi/2$ about $x$. Summing up, we obtain
\[
AB=(S_5S_4)(S_1S_2)=S_5(S_4S_1)S_2=S_5(S_6S_3)S_2=(S_5S_6)(S_3S_2),
\]
which is the composition of a rotation of $\pi$ about $y$, followed by a rotation of $\pi$ about $z$. These two rotations leave the geodesic through $y$ and $z$ invariant, and thus this geodesic is the axis of $AB$; moreover,
\[
\ell(AB)=d(y,AB*y)=d(y,S_5S_6*y)=2d(y,z).
\]
By the triangle inequality we conclude
\[
\frac{\ell(AB)}{2}=d(y,z)<d(y,x)+d(x,z)=\frac{\ell(A)}{2}+
\frac{\ell(B)}{2},
\]
as desired.

Assume now that the axes of $A$ and $B$ are ultraparallel; see Figure~\ref{fig1} right. Then they determine a unique common perpendicular, denoted by $\sigma_1$, which intersects the axis of $B$ in $x$ and the axis of $A$ in $y$. Let $z$ be the point at distance $\ell(B)/2$ from $x$ moving towards $\beta^-$, and $w$ the point at distance $\ell(A)/2$ from $y$ towards $\alpha^+$. Draw the perpendicular $\sigma_2$ at $z$ to the axis of $B$, and the perpendicular $\sigma_3$ at $w$ to the axis of $A$. Defining as above $S_i$ to be the reflection of mirror $\sigma_i$, we have $A=S_3S_1$, $B=S_1S_2$, and $AB=S_3S_2$, because $S_1$ cancels. Ultraparallel geodesics have a well-defined hyperbolic distance, still denoted by $d$, and we have $d(\sigma_1,\sigma_2)=\ell(B)/2$, $d(\sigma_1,\sigma_3)=\ell(A)/2$, and $d(\sigma_2,\sigma_3)=\ell(AB)/2$.

Denote by $\overline{\sigma}_i$ the euclidean circle (possibly a straight line) in $\Cbb$ of which $\sigma_i$ is an arc. Then the $\overline{\sigma}_i$s are pairwise nonintersecting (because so are the $\sigma_i$s, and an intersection outside $\mc{H}$ would produce an intersection inside, by M\"obius inversion through $\partial\mc{H}$), and $\overline{\sigma}_1$ separates $\overline{\sigma}_2$ from $\overline{\sigma}_3$, meaning that any circle intersecting $\overline{\sigma}_2$ and $\overline{\sigma}_3$ intersects $\overline{\sigma}_1$ too.
The circles $\partial\mc{H}$ and $\overline{\mathrm{axis}(AB)}$ are distinct and perpendicular to both $\overline{\sigma}_2$ and $\overline{\sigma}_3$; thus the set of circles perpendicular to both 
$\partial\mc{H}$ and $\overline{\mathrm{axis}(AB)}$ constitute the \newword{coaxial pencil} determined by the pair
$\overline{\sigma}_2,\overline{\sigma}_3$~\cite[\S4]{coxeter71}. The key observation here is that $\overline{\sigma}_1$ does not belong to this pencil, since it is not perpendicular to $\mathrm{axis}(AB)$ (because common perpendiculars to pairs of ultraparallel geodesics are unique, and $A$, $B$, $AB$ have distinct axes by Lemma~\ref{ref8}(3)).

Now, while points in the hyperbolic plane obey the triangle inequality, ultraparallel geodesics obey the \newword{non-triangle inequality}~\cite[\S6]{coxeter71}, according to which the distance between ultraparallel geodesics is strictly greater than the sum of distances between the two given geodesics and a third one, separating the two but not coaxial to them. In our case we get
\[
\frac{\ell(AB)}{2}=d(\sigma_2,\sigma_3)>d(\sigma_2,\sigma_1)+d(\sigma_1,\sigma_3)=\frac{\ell(B)}{2}+
\frac{\ell(A)}{2},
\]
again as desired.
\end{proof}

\section{Words}\label{ref4}

As anticipated in~\S\ref{ref1}, some caution is required in defining the length of matrix products. The problem is, of course, that a given pair $A,B\in\SL_2\Rbb$ (even with nonnegative entries) may fail to generate not only a free group ---a tolerable fault--- but even a free semigroup. For example, the matrices
\[
A=\begin{pmatrix}
1 & \sqrt{6} \\
& 1
\end{pmatrix},\quad
B=\begin{pmatrix}
1 & \\
\sqrt{6} & 1
\end{pmatrix},
\]
satisfy the nontrivial identity
\[
A^2B^3A^2=BA^6B=
\begin{pmatrix}
2 & \sqrt{6} \\
\sqrt{6}/2 & 2
\end{pmatrix};
\]
see~\cite{brennercharnow78} for other examples.
We deal with the issue by working with free semigroups of words, as follows.

Let $\set{a,b}$ be a two-letter alphabet, $F_2^+$ the free semigroup of words $w$ of length $\abs{w}\ge1$, and $F_2$ the enveloping free group. Once a pair $A,B\in\SL_2\Rbb$ has been fixed, we consider the group homomorphism $\phi:F_2\to\SL_2\Rbb$ that maps $a$ to $A$ and $b$ to $B$, and the induced character $[\argomento]:F_2\to\Rbb$ defined by $[w]=\tr(\phi(w))$. 

\begin{lemma}
The\label{ref5} following statements are true.
\begin{enumerate}
\item Let $\phi'$, $[\argomento]'$ be induced by another matrix choice $A',B'\in\SL_2\Rbb$. If $[a]=[a]'$, $[b]=[b]'$, $[ab]=[ab]'$, then $[\argomento]=[\argomento]'$.
\item $[w][u]=[wu]+[wu\m]$.
\item Given $w$, let $u$ be either $w\m$, or a rotation of $w$, or the reversal of $w$ (that is, $w$ written backwards). Then $[u]=[w]$.
\item $\ttr{wuv}=\ttr{wv}\ttr{u}-\ttr{wu\m v}$, and thus
$[wu^2v]=[wuv][u]-[wv]$.
\end{enumerate}
\end{lemma}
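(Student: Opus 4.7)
\textbf{Plan for Lemma~\ref{ref5}.} I will prove the items out of order: first (2), then the easy parts of~(3) and~(4), then~(1) by induction, and finally the reversal statement in~(3) as a corollary of~(1).

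For~(2), Cayley--Hamilton applied to $Y=\phi(u)\in\SL_2\Rbb$ gives $Y^{2}-\tr(Y)Y+I=0$, i.e.\ $Y+Y^{-1}=\tr(Y)I$; left-multiplying by $\phi(w)$ and taking traces yields $[wu]+[wu^{-1}]=[u][w]$. The same identity implies $\tr(M)=\tr(M^{-1})$ for $M\in\SL_2\Rbb$, which is the inverse part of~(3), and cyclic invariance is the standard $\tr(MN)=\tr(NM)$. Identity~(4) is then a direct consequence: applying~(2) with first argument $vw$ and second argument $u$ and rotating each of the resulting traces cyclically gives $[wv][u]=[wuv]+[wu^{-1}v]$, whence the second formula by specializing $v$ to $uv$.

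For~(1), I induct on $n=|w|$. The cases $n\le 2$ are handled explicitly: $[a^{2}]=[a]^{2}-2$ and $[b^{2}]=[b]^{2}-2$ by Cayley--Hamilton, and all six length-$2$ words of the form $a^{\pm1}b^{\pm1}$ reduce to polynomials in $[a],[b],[ab]$ via~(2) together with inverse invariance. For $n\ge 3$, if $w$ contains a subword $x^{2}$ with $x\in\{a,a^{-1},b,b^{-1}\}$ then the second identity of~(4) expresses $[w]$ via traces of length $n-1$ and $n-2$, and the induction proceeds. Otherwise $w$ is strictly alternating between $\{a,a^{-1}\}$-letters and $\{b,b^{-1}\}$-letters; a cyclic rotation (allowed by the cyclic part of~(3)) exposes a square whenever $x_{1}=x_{n}$, and in the remaining cyclically reduced alternating cases splitting $w$ in two halves and applying~(2) reduces to shorter or structurally simpler traces, with the Fricke commutator identity $[aba^{-1}b^{-1}]=[a]^{2}+[b]^{2}+[ab]^{2}-[a][b][ab]-2$ handling the smallest irreducible case. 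Finally, for the reversal half of~(3), I introduce $\psi\colon F_{2}\to\SL_2\Rbb$ defined on generators by $\psi(a)=\phi(a)^{T}$, $\psi(b)=\phi(b)^{T}$ and extended as a homomorphism; since $(MN)^{T}=N^{T}M^{T}$, induction on word length gives $\psi(w)=\phi(w^{R})^{T}$. The characters of $\psi$ and $\phi$ agree on $a,b,ab$, as $\tr(M^{T})=\tr(M)$ and $\tr(A^{T}B^{T})=\tr((BA)^{T})=\tr(AB)$, so~(1) forces $[w^{R}]=\tr(\phi(w^{R})^{T})=\tr(\psi(w))=[w]$.

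The main obstacle is the induction in~(1): the $x^{2}$-reduction via~(4) takes care of most words, but strictly alternating words---and in particular commutator-like words that resist reduction even after cyclic rotation---need a separate argument and a careful bookkeeping to ensure termination. Once~(1) is established, the reversal statement in~(3) becomes a one-line consequence of the Fricke--Vogt rigidity it encodes.
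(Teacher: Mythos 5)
Your arguments for (2), (3), and (4) are correct and, where they overlap with the paper's, agree with them. The only genuine difference in these items is the reversal half of~(3): you introduce the transpose homomorphism $\psi$ with $\psi(a)=\phi(a)^{T}$, $\psi(b)=\phi(b)^{T}$ and invoke~(1), whereas the paper sets $\phi'(a)=A^{-1}$, $\phi'(b)=B^{-1}$ and observes $[u]=[w^{-1}]'=[w^{-1}]=[w]$. Both are instances of the same device --- produce a second representation whose character agrees with $\phi$'s on $a,b,ab$ and pull back the reversal through it via~(1) --- so the choice between transpose and inverse is cosmetic.

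The substantive divergence is in~(1). The paper disposes of it by citing Horowitz~\cite{horowitz72} for the existence of the trace polynomial $f_w$, whereas you sketch a self-contained induction, and you yourself flag the termination of that induction as the main obstacle. The flag is warranted: the argument as written has a real gap in the alternating case. Splitting a cyclically reduced alternating word $w$ ``in two halves'' and applying~(2) need not simplify anything; for example, with $w=aba^{-1}bab^{-1}$, $w_1=aba^{-1}$, $w_2=bab^{-1}$, the error term $w_1w_2^{-1}=aba^{-1}ba^{-1}b^{-1}$ is again a cyclically reduced alternating word of length six with no letter-square, so you have replaced one bad word by another. The fix is an asymmetric split: take $w_1$ to be the first two letters $x_1x_2$ and $w_2$ the rest. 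For an even-length alternating cyclically reduced word, $x_2$ and $x_n$ lie in the same alphabet letter set, so the junction $x_2x_n^{-1}$ inside $w_1w_2^{-1}$ is either a cancellation or a square $b^{\pm2}$, and then~(4) drops the length below $n$; the odd-length case is handled by your cyclic-rotation observation. With this split the Fricke commutator identity you list as an ``irreducible'' base case is in fact derivable by the same mechanism (take $w_1=ab$, $w_2=a^{-1}b^{-1}$, so $w_1w_2^{-1}=ab^2a$), so it need not be postulated separately. Either patch makes your direct proof of~(1) work; but since the rest of the paper uses~(1) only as a black box, the citation to Horowitz is the cleaner route and is what the paper does.
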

\begin{proof}
(1) follows from the fact~\cite[Theorem~3.1]{horowitz72} that, for a fixed $w$, there exists a polynomial $f_w\in\Zbb[x,y,z]$ such that, for varying $\phi$, we have $\tr(\phi(w))=f_w\bigl(\tr(\phi(a)),\tr(\phi(b)),\tr(\phi(ab))\bigr)$. 
(2) and the identity $[w]=[w\m]$ are well known, and the invariance of $[\argomento]$ under word rotation follows from the invariance of the trace under conjugation. Define $\phi'(a)=A\m$, $\phi'(b)=B\m$; then $[\argomento]'=[\argomento]$ by (1) and invariance under rotation and group inversion. Letting $u$ be the reversal of $w$, we obtain $[u]=[w\m]'=[w\m]=[w]$, which proves (3). Finally, (4) follows from (2) and rotation invariance.
\end{proof}

A key feature of our formalism is that, not only word length in $F_2^+$ is better behaved than matrix product length, but the implicit comparison of spectral radii in~\eqref{eq2} becomes an explicit preorder on words, as follows.

\begin{definition}
Let $A,B\in\SL_2\Rbb$ have trace greater than or equal to $2$, and assume that they are coherently oriented. Define $\phi$, $\ttr{\argomento}$ as above; by Lemma~\ref{ref8}, $\ttr{\argomento}$ takes values in $\Rbb\p$. We define a binary relation $\preceq_{A,B}$ on $F_2^+$ by
\[
w\preceq_{A,B} u\quad\text{if and only if}\quad\bigl[w^{\abs{u}}\bigr]
\le \bigl[u^{\abs{w}}\bigr].
\]
By saying that a word is \newword{maximal} we mean maximal with respect to $\preceq$ (for simplicity's sake we are dropping in the notation the dependence from $A$ and $B$).
A \newword{complete set of optimal words} is a possibly infinite subset $\set{v_1,v_2,\ldots}$ of $F_2^+$ such that:
\begin{itemize}
\item every $v_i$ is maximal, and is a \newword{Lyndon word}, i.e., is strictly smaller in the lexicographic order than any of its proper rotations (in particular, it is not a power);
\item $v_i\ne v_j$ for $i\ne j$;
\item every maximal $w$ is a power of a rotation of some (necessarily unique) $v_i$.
\end{itemize}
A word that belongs to a complete set of optimal words is an~\newword{optimal} word.
\end{definition}

\begin{lemma}
\begin{enumerate}
\item We\label{ref6} have $w\preceq u$ if and only if $\bigl[w^{m/\abs{w}}\bigr]
\le \bigl[u^{m/\abs{u}}\bigr]$, where $m$ is any common multiple of $\abs{w}$ and $\abs{u}$.
\item The relation $\preceq$ on $F_2^+$ is a preorder, and every two words are comparable.
\item If a complete set of optimal words exists, it is unique.
\item The finiteness conjecture holds for $\Sigma=\set{A,B}$ precisely when $F_2^+$ contains maximal ---equivalently, optimal--- words.
\end{enumerate}
\end{lemma}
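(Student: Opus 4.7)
The plan is to reduce everything to a single scalar comparison and then read off the four statements. Specifically, I would first observe that for any $w\in F_2^+$ the matrix $\phi(w)$ has $\tr\ge2$ by Lemma~\ref{ref8}(1), so by~\eqref{eq1} its spectral radius $\rho_w=\rho(\phi(w))$ lies in $[1,\infty)$, and $[w^n]=\rho_w^n+\rho_w^{-n}$ for every $n\ge1$. Since the function $f(x)=x+x^{-1}$ is strictly increasing on $[1,\infty)$, comparing $[w^k]$ to $[u^l]$ reduces to comparing $\rho_w^k$ to $\rho_u^l$. Whenever $m=k\abs{w}=l\abs{u}$ is a common multiple of $\abs{w}$ and $\abs{u}$, taking $m$th roots turns $\rho_w^k\le\rho_u^l$ into the condition $\rho_w^{1/\abs{w}}\le\rho_u^{1/\abs{u}}$, which does not depend on $m$. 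This simultaneously proves~(1) and shows that
\[
w\preceq u\iff \rho_w^{1/\abs{w}}\le\rho_u^{1/\abs{u}},
\]
i.e.\ $\preceq$ is the pullback of the total order on $\Rbb$ via the map $w\mapsto\rho_w^{1/\abs{w}}$, giving~(2).

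For~(3), I would first note that rotation invariance of the character (Lemma~\ref{ref5}(3)) applied to $w^n$ gives $[w^n]=[u^n]$ whenever $u$ is a rotation of $w$, since such $u^n$ is a rotation of $w^n$; hence $\rho_u^{1/\abs{u}}=\rho_w^{1/\abs{w}}$ for rotations. Similarly, if $w=v^k$ is a proper power, the identity $\rho_{v^k}=\rho_v^k$ gives $\rho_w^{1/\abs{w}}=\rho_v^{1/\abs{v}}$. Consequently, whether a word is maximal depends only on the rotation class of its primitive root. Since every rotation class of a primitive word contains a unique Lyndon representative (the lexicographically smallest rotation), the Lyndon words among the maximal words form an intrinsic set $\{v_i\}$, and any complete set of optimal words must equal this set.

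For~(4), I would identify $\tilde\rho(\Sigma)$ with $\sup\set{\rho_w^{1/\abs{w}}:w\in F_2^+}$: every $\phi(w)$ lies in $\Sigma^{\abs{w}}$ and conversely every product in $\Sigma^n$ is of the form $\phi(w)$ for some $w$ of length $n$, so the Berger--Wang formula~\eqref{eq2} translates verbatim. Under the characterization established above, this supremum is attained precisely when a maximal word exists, which is the finiteness conjecture for $\Sigma$. For the "equivalently, optimal" parenthesis, I would argue: every optimal word is maximal by definition, and conversely, given any maximal $w$, replacing it by the Lyndon rotation of its primitive root yields a Lyndon maximal word, which together with all other Lyndon maximal words forms a complete set of optimal words by~(3). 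I expect no real obstacle: once the scalar reformulation of $\preceq$ is in place the entire lemma is essentially bookkeeping about rotations, powers, and Lyndon representatives, and the only point requiring care is to invoke Lemma~\ref{ref5}(3) to see that rotations of $w^n$ are rotations of $(\text{rotation of }w)^n$.
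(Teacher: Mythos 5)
Your proof is correct and follows essentially the same route as the paper: both reduce $\preceq$ to comparing the quantity $\rho_w^{1/\abs{w}}$ (the paper via ``the remarks following Equations~\eqref{eq1},'' you via the explicit identity $[w^n]=\rho_w^n+\rho_w^{-n}$ and monotonicity of $x+x^{-1}$), and both use rotation/power invariance plus Lyndon representatives for the uniqueness and existence claims. Your treatment of~(4) is somewhat more explicit than the paper's terse ``the remaining assertions follow straight from the definitions,'' but there is no difference in substance.
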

\begin{proof}
Let $W=\phi(w)$, and analogously for $u$ and $v$
(later on we will apply this uppercase/lowercase convention without further notice).
By the remarks following Equations~\eqref{eq1}, we have $w\preceq u$ if and only if $\rho\bigl(W^{\abs{u}}\bigr)\le \rho\bigl(U^{\abs{w}}\bigr)$
if and only if 
$\rho\bigl(W^{m/\abs{w}}\bigr)\le \rho\bigl(U^{m/\abs{u}}\bigr)$
if and only if $\bigl[w^{m/\abs{w}}\bigr]\le
\bigl[u^{m/\abs{u}}\bigr]$.
It is clear that $\preceq$ is reflexive and every two words are comparable. Assuming $w\preceq u\preceq v$, and letting $m$ be a common multiple of $\abs{w},\abs{u},\abs{v}$, we obtain
$\bigl[w^{m/\abs{w}}\bigr]\le\bigl[u^{m/\abs{u}}\bigr]\le
\bigl[v^{m/\abs{v}}\bigr]$, and thus $w\preceq v$.
Let $S$ and $S'$ be two complete sets of optimal words, and let $v\in S$. Since $v$ is maximal, it is a power of a rotation of some $v'\in S'$; by the elementary properties of Lyndon words, $v=v'$.
The remaining assertions follow straight from the definitions; note that every word is both greater and less than any of its powers. In particular, if the maximal word $w$ is a power of $u$, then $u$ is maximal as well.
\end{proof}

We can now make precise and prove~(I), (II) and (III) in~\S\ref{ref1}. We stipulate for the rest of this paper, and without further repetitions, that $A,B$ are noncommuting matrices in $\SL_2\Rbb$, of trace greater than or equal to $2$, and coherently oriented.
The following theorem settles~(I).

\begin{theorem}
Let\label{ref7} $A,B$ be both hyperbolic, and assume that the translation axes intersect. If $[a]\le [b]$ then $b$ is an optimal word, and so is $a$ provided $[a]=[b]$. There are no other optimal words.
\end{theorem}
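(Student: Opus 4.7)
The plan is to work with hyperbolic distance from the intersection point of the axes. Let $x\in\mathcal{H}$ be the unique point where $\mathrm{axis}(A)$ and $\mathrm{axis}(B)$ meet; uniqueness holds because $A,B$ do not commute. Since $x$ lies on both axes, $d(x,Ax)=\ell(A)$ and $d(x,Bx)=\ell(B)$, and trivially $\ell(W)\le d(x,Wx)$ for every word $W=\phi(w)$. The core estimate I would prove is
\[
d(x,Wx)\le |w|_a\,\ell(A)+|w|_b\,\ell(B)\quad\text{for every }w\in F_2^+,
\]
where $|w|_a$ and $|w|_b$ count the occurrences of $a$ and $b$ in $w$, with strict inequality unless $w$ is a pure power of $a$ or of $b$. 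Granting this, the hypothesis $[a]\le[b]$ gives $\ell(A)\le\ell(B)$, so $\ell(W)\le \abs{w}\,\ell(B)$, i.e.\ $w\preceq b$; strict inequality holds unless $w=b^n$, and also unless $w=a^n$ in the equal-trace case. Consequently the only Lyndon maximal words are $b$ (always) and $a$ (precisely when $[a]=[b]$), which is exactly the content of the theorem.

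The estimate is proved by induction on $\abs{w}$. The base case $\abs{w}=1$ is equality. For the inductive step, decompose $w=w_1c$ where $c\in\{a,b\}$ is the last letter; since $W_1$ is an isometry, the hyperbolic triangle inequality gives
\[
d(x,Wx)\le d(x,W_1x)+d(W_1x,W_1Cx)=d(x,W_1x)+\ell(C),
\]
and the inductive hypothesis bounds $d(x,W_1x)$ by $|w_1|_a\,\ell(A)+|w_1|_b\,\ell(B)$, yielding the desired bound. If $w_1$ is not a pure power, induction already supplies strict inequality, so the only delicate case is $w_1=a^k$ with $c=b$ (or the symmetric $w_1=b^k$ with $c=a$), $k\ge 1$.

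The main obstacle is thus this last strict-inequality check, which reduces to a short collinearity argument in $\mathcal{H}$. For $w=a^kb$: equality in the triangle inequality would force $A^kx$ to lie on the geodesic segment from $x$ to $A^kBx$. Since $x$ and $A^kx$ both lie on $\mathrm{axis}(A)$, that geodesic would coincide with $\mathrm{axis}(A)$, forcing $A^kBx\in\mathrm{axis}(A)$ and hence $Bx\in\mathrm{axis}(A)$. But $x\in\mathrm{axis}(B)$ implies $Bx\in\mathrm{axis}(B)$, so $Bx$ would lie in $\mathrm{axis}(A)\cap\mathrm{axis}(B)=\{x\}$, contradicting $B$ being hyperbolic. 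The case $w=b^ka$ is symmetric, completing the induction; the theorem then follows as outlined in the first paragraph, with the observation that $a$ and $b$ are automatically Lyndon while no higher-length Lyndon word can be maximal because it is never a pure power.
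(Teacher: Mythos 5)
Your proof is correct, but it takes a genuinely different route from the paper's. The paper peels off the \emph{first} letter of $w$, uses Lemma~\ref{ref8} to verify that the remaining product $U$ has fixed points in $I^\pm\setminus\{\alpha^\pm\}$ (so that $\mathrm{axis}(A)$ and $\mathrm{axis}(U)$ intersect), and then invokes Theorem~\ref{ref3}'s strict inequality $\ell(AU)<\ell(A)+\ell(U)$, combined with the inductive bound $\ell(U)\le (n-1)\ell(B)$. Your argument instead tracks the orbit of the single point $x=\mathrm{axis}(A)\cap\mathrm{axis}(B)$ and peels off the \emph{last} letter: $\ell(W)\le d(x,Wx)\le |w|_a\ell(A)+|w|_b\ell(B)$ by iterated triangle inequality, with a direct collinearity argument for strictness. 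This is more elementary --- you need only the triangle inequality in $\mathcal{H}$, the fact that distinct geodesics meet in at most one point, and that $A,B$ noncommuting forces distinct axes; you bypass Theorem~\ref{ref3} (and its reflections-and-pencils machinery) and the boundary-interval bookkeeping of Lemma~\ref{ref8} entirely. The trade-off is that the paper's Theorem~\ref{ref3} is a reusable tool (it reappears in the proofs of Theorems~\ref{ref9}, \ref{ref10} and Lemma~\ref{ref13}), whereas your argument is tailored to the intersecting-axes case. One minor remark: to pass from $\ell(W)<n\ell(B)$ to $w\prec b$ via the monotone relations in Equation~\eqref{eq1}, you implicitly use that $\tr(W)\ge 2$, which holds by Lemma~\ref{ref8}(1) under the standing coherent-orientation assumption; it would be worth citing this explicitly.
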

\begin{proof}
We show by induction that for every word $w$ of length $n\ge1$ we have $w\preceq b$. For $n=1$ or $w\in\set{a^n,b^n}$ this is true. Let $n>1$ and $w=au$ without loss of generality, with $u$ not a power of $a$. By Lemma~\ref{ref8}, $\upsilon^+\in I^+\setminus\set{\alpha^+}$ and $\upsilon^-\in I^-\setminus\set{\alpha^-}$. Therefore the axes of $A$ and of $U$ intersect, and by Theorem~\ref{ref3} and inductive hypothesis we have $\ell(W) < \ell(A)+\ell(U)\le n\ell(B)$. Since $\ell(A^n)=n\ell(B)$ if and only if $[a]=[b]$, this also shows uniqueness.
\end{proof}

\begin{theorem}
Let\label{ref9} $B$ be hyperbolic.
\begin{itemize}
\item[(1)] If $A$ is hyperbolic as well and the translation axes are asymptotically parallel, then:
\begin{itemize}
\item[(1.1)] If $[a]\ne[b]$, then the only optimal word is the one among $a$ and $b$ that corresponds to the matrix with larger trace.
\item[(1.2)] If $[a]=[b]$ and $I^+\cap I^-=\emptyset$, then every word which is not a power is optimal. Conversely, if $I^+$ and $I^-$ intersect (necessarily in a singleton), then both $a$ and $b$ are optimal, and there are no other optimal words.
\end{itemize}
\item[(2)] If $A$ is parabolic with $\alpha^+=\alpha^-\in\set{\beta^+,\beta^-}$, then $b$ is the only optimal word.
\end{itemize}
\end{theorem}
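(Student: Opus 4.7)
The strategy throughout is to exploit the shared ideal endpoint---the common endpoint of the axes in~(1) and the parabolic fixed point in~(2)---by conjugating it to $\infty$. This makes both $A$ and $B$ upper triangular, so every product $W=\phi(w)$ is also upper triangular, with diagonal entries given by monomials in the diagonal entries of $A$ and $B$; the spectral radius $\rho(W)$ is the larger of the two in absolute value. The preorder characterization $w\preceq u\iff\rho(W)^{\abs u}\le\rho(U)^{\abs w}$ from Lemma~\ref{ref6}(1) then converts every optimality claim into a monotonicity statement for real exponentials.

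In case~(1) with $I^+\cap I^-=\emptyset$, the shared endpoint is necessarily of the same type (attracting or repelling) for both matrices; conjugating it to $\infty$ gives $A=\bigl(\begin{smallmatrix}r^{\epsilon} & * \\ 0 & r^{-\epsilon}\end{smallmatrix}\bigr)$ and $B=\bigl(\begin{smallmatrix}s^{\epsilon} & * \\ 0 & s^{-\epsilon}\end{smallmatrix}\bigr)$ with $r,s>1$ and a common sign $\epsilon\in\{\pm 1\}$. For a word with $k$ occurrences of $a$ and $n-k$ of $b$, one has $\rho(W)=r^k s^{n-k}$, so $w\preceq b\iff r^k\le s^k$. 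If $r<s$, this is strict except when $k=0$, making $b$ the unique optimal word; if $r=s$, equality holds for every word, so all words of length $n$ have $\rho(W)=r^n$ and every Lyndon word is optimal. This settles one half of~(1.1) and the first part of~(1.2).

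In case~(1) with $I^+\cap I^-\ne\emptyset$, the shared endpoint is attracting for one matrix and repelling for the other; conjugating it to $\infty$ (with $A$ attracting) yields $A=\bigl(\begin{smallmatrix}r & * \\ 0 & r^{-1}\end{smallmatrix}\bigr)$ and $B=\bigl(\begin{smallmatrix}s^{-1} & * \\ 0 & s\end{smallmatrix}\bigr)$ with $r,s>1$, so $\rho(W)=\max\bigl(r^k s^{-(n-k)},\,r^{-k}s^{n-k}\bigr)$. A direct exponential comparison gives $\rho(W)\le\max(r,s)^n$, with equality precisely when $w$ is a pure power of the generator having the larger trace; this completes~(1.1). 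When $r=s$ the expression collapses to $\rho(W)=r^{\abs{2k-n}}$, which attains its maximum $r^n$ exactly at $k=0$ and $k=n$, so $a$ and $b$ are the only optimal words, closing~(1.2).

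Case~(2) proceeds identically: after conjugating the common fixed point to $\infty$, $A=\bigl(\begin{smallmatrix}1 & * \\ 0 & 1\end{smallmatrix}\bigr)$ is unipotent and $B$ has diagonal $(s^{\pm 1},s^{\mp 1})$ with $s>1$; then $\rho(W)=s^{n-k}\le s^n$ with equality iff $k=0$, so $b$ is the unique optimal word. The main obstacle I foresee is the bookkeeping needed to extract, from each version of the coherent-orientation hypothesis, the correct sign pattern on the diagonals of $A$ and $B$ after the normalizing conjugation; once the matrices are in triangular normal form, every assertion reduces to an elementary manipulation of powers of positive reals.
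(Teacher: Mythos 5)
Your proof is correct and takes essentially the same route as the paper's: after conjugating the shared ideal point (the common endpoint of the axes, or the parabolic fixed point) to $\infty$, both matrices become upper triangular with reciprocal diagonal pairs, and the spectral radius of every product is read off from the diagonal monomials. The paper performs exactly this reduction---normalizing the off-diagonal entry of $A$ to $1$ and making $B$ diagonal---and then leaves the remaining comparisons to ``direct inspection''; your exponential bookkeeping, including the correct identification of which type of shared endpoint corresponds to $I^+\cap I^-=\emptyset$ versus $I^+\cap I^-\ne\emptyset$, supplies precisely those omitted checks.
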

\begin{proof}
(1) By conjugating, possibly exchanging $A$ with $B$ and inverting both of them, we may assume $\alpha^+=\beta^+=\infty$
when $I^+\cap I^-=\emptyset$ and $\alpha^+=\beta^-=\infty$ when $I^+\cap I^-\neq \emptyset$.
Let $r=\rho(A)$ and $s=\rho(B)$.
In the first case, after a further conjugation by a parabolic matrix fixing $\infty$, and by a diagonal matrix, we obtain
\[
A=\begin{pmatrix}
r & 1 \\
& r\m
\end{pmatrix},\quad
B=\begin{pmatrix}
s &  \\
& s\m
\end{pmatrix}.
\]
In the second case we similarly obtain
\[
A=\begin{pmatrix}
r & 1 \\
& r\m
\end{pmatrix},\quad
B=\begin{pmatrix}
s\m &  \\
& s
\end{pmatrix}.
\]
It remains to check our claims (1.1) and (1.2) on these two pairs, which  is easily done by direct inspection.
		
(2) is obvious: up to a conjugation we have
\[
A=\begin{pmatrix}
1 & r \\
& 1
\end{pmatrix},\quad
B=\begin{pmatrix}
s &  \\
& s\m
\end{pmatrix},
\]
for some $r\in\Rbb\setminus\set{0}$ and $s\in\Rbb\pp\setminus\set{1}$.
\end{proof}

\begin{theorem}
Let\label{ref10} $\tr(A)=\tr(B)$ and assume that the pair $A,B$ is well oriented. Then the only optimal word is $ab$.
\end{theorem}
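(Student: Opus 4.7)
The plan is to exploit the taxonomy of Example~\ref{ref20}: under $\tr(A)=\tr(B)$ and well-orientedness, only two subcases survive, namely $A,B$ both parabolic (where $\tr=2$ is automatic) and $A,B$ both hyperbolic with ultraparallel axes. In each, the goal is to show $w\preceq ab$ for every $w\in F_2^+$, strictly unless $w$ is a power of a rotation of $ab$. Since $ab$ is Lyndon and $[ab]=[ba]$ by Lemma~\ref{ref5}(3), this matches the theorem's uniqueness claim.

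For the hyperbolic ultraparallel subcase, Theorem~\ref{ref3} directly yields $\ell(AB)>\ell(A)+\ell(B)=2\ell(A)$, so $a,b\prec ab$. I would continue by induction on $|w|$, splitting off an $ab$-block: writing $w=ab\cdot w'$ (up to rotation), Lemma~\ref{ref8} places the fixed points of $W'$ strictly inside $I^+,I^-$, and a careful check of the cyclic order of the four relevant boundary points should show that the axes of $AB$ and $W'$ intersect. Then Theorem~\ref{ref3} in its intersecting case gives $\ell(W)<\ell(AB)+\ell(W')$, and the inductive bound $\ell(W')\le (|w'|/2)\ell(AB)$ closes the argument with strict inequality. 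The subtle point is that the ``natural'' split $w=uv$ can produce sub-matrices with ultraparallel axes, for which Theorem~\ref{ref3} points the wrong way; selecting the $ab$-block split and tracking fixed-point locations is the key trick.

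For the parabolic subcase, by Lemma~\ref{ref19} and a diagonal conjugation (available since $\tr A=\tr B=2$), one reduces to the symmetric normal form $A=\ppmatrix{1}{t}{0}{1}$, $B=\ppmatrix{1}{0}{t}{1}$ with $t>0$. Expanding matrix products by transition paths yields
\[
\tr(W)=2+\sum_{m\ge 1}\bigl(N_m^{ab}(w)+N_m^{ba}(w)\bigr)t^{2m},
\]
where $N_m^{ab}(w)$ counts increasing index sequences $i_1<j_1<\cdots<i_m<j_m$ in $\{1,\dots,|w|\}$ with $w_{i_s}=a$, $w_{j_s}=b$, and $N_m^{ba}$ is the symmetric count with $b$ before $a$. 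The required inequality $[w^2]\le [(ab)^{|w|}]$ then reduces to the coefficient-wise combinatorial inequality
\[
N_m^{ab}((ab)^{|w|})+N_m^{ba}((ab)^{|w|})\ge N_m^{ab}(w^2)+N_m^{ba}(w^2)
\]
for every $m\ge 1$, strict for some $m$ unless $w=(ab)^k$. The main obstacle lies here: establishing this combinatorial inequality, which expresses that $(ab)^{|w|}$ is the most ``interleaved'' length-$|w|$ pattern and therefore dominates $w^2$ in counts of alternating subsequences. I would attempt an explicit injection from the alternating subsequences of $w^2$ into those of $(ab)^{|w|}$, with the surjectivity defect pinpointing the equality case $w=(ab)^k$.
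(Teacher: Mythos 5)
Your plan contains two genuine gaps, one in each subcase.

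In the hyperbolic ultraparallel subcase, the inductive step hinges on the claim that splitting off the leading $ab$-block, $w=ab\cdot w'$, produces factors $AB$ and $W'$ with \emph{intersecting} axes, so that the intersecting-case inequality $\ell(W)<\ell(AB)+\ell(W')$ of Theorem~\ref{ref3} applies. This is false. Already for $w=ab^2$ (so $w'=b$) the axis of $AB$ has both endpoints strictly inside the interval cut off by the axis of $B$, so the two axes are \emph{ultraparallel}, not intersecting; the same nesting persists for, e.g., $w'=ab^2$. In those cases Theorem~\ref{ref3} points the wrong way, giving $\ell(W)>\ell(AB)+\ell(W')$, which is useless for the desired upper bound. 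There is no choice of rotation or block split that rescues the argument in general: in the well-oriented situation the axes of products tend to nest rather than cross, and this is precisely why the proof cannot be a naive length induction. (The paper sidesteps this by first proving, via a J{\o}rgensen--Smith-type lemma, that a trace-maximizer over words with fixed letter counts has all occurrences of the minority letter isolated, and then, for any non-power of $ab$, forms $uu'$ with $u'$ the $a\leftrightarrow b$-swap of $u$; the axes of $U$ and $U'$ are shown to be ultraparallel so that Theorem~\ref{ref3} \emph{increases} the trace, and the isolated-letter lemma then bounds $uu'$ by $(ab)^{|u|}$.)

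In the parabolic subcase you correctly reduce to the symmetric normal form and derive a clean expansion of $\tr(W)$ as a polynomial in $t^2$ with coefficients counting alternating subsequences; this identity checks out on small examples. However, the whole argument then rests on the coefficient-wise inequality $N_m^{ab}((ab)^{|w|})+N_m^{ba}((ab)^{|w|})\ge N_m^{ab}(w^2)+N_m^{ba}(w^2)$, which you explicitly flag as the main unresolved obstacle. Without a proof of this inequality (and of strictness unless $w$ is a rotation-power of $ab$), the parabolic case is not established. The paper instead handles both subcases uniformly: the Claim about isolated occurrences holds verbatim in the parabolic setting (the identity~\eqref{eq4} and the auxiliary inequalities are verified directly), so no separate combinatorial machinery is needed. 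Your combinatorial reformulation is appealing and might yield an alternative elementary proof, but as written it is a restatement of what needs to be shown, not a proof.
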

\begin{proof}
By possibly exchanging $A$ with $B$, and after an appropriate conjugation, we reduce our matrices to the standard form
\[
A=\begin{pmatrix}
1 &  \\
r & 1
\end{pmatrix},\quad
B=\begin{pmatrix}
1 & r \\
& 1
\end{pmatrix},
\]
for some $r>0$ in the parabolic case, or
\[
A=D\m H D,\quad
B=D H D\m,
\]
in the hyperbolic one; here we set
\[
H=
\begin{pmatrix}
\cosh(\ell/2) & \sinh(\ell/2) \\
\sinh(\ell/2) & \cosh(\ell/2)
\end{pmatrix},\quad
D=\begin{pmatrix}
\exp(d/4) & \\
& \exp(-d/4)
\end{pmatrix},
\]
with $\ell=\ell(A)=\ell(B)>0$ and $d=d(\mathrm{axis}(A),\mathrm{axis}(B))>0$.
We will establish the result by showing that, for every word $u$ which is not a power of $ab$ or or $ba$, we have $u\prec ab$.

Fix such a $u$ of length $n$, and let $n_a,n_b$ be the number of occurrences of $a$ ---respectively $b$--- in it. If one of $n_a,n_b$ is zero, our claim is true: this is clear in the parabolic case (because $AB$ is hyperbolic), and follows from Theorem~\ref{ref3} in the hyperbolic one.

\paragraph{\emph{Claim}} Let $\mc{W}(n_a,n_b)$ be the set of words containing $n_a$ occurrences of $a$ and $n_b$ of $b$, and 
assume without loss of generality $n_a\ge n_b$.
Let $w\in\mc{W}(n_a,n_b)$ with $[w]$ maximal among words in $\mc{W}(n_a,n_b)$. Then every occurrence of $b$ in $w$ is isolated, that is, is preceded and followed, in the cyclic order, by occurrences of $a$.

\paragraph{\emph{Proof of Claim}} In the hyperbolic case this is the content of~\cite[Lemma~5-3]{jorgensensmith90}. The same statement holds in the parabolic case as well. Indeed, the proof of~\cite[Lemma~5-3]{jorgensensmith90} works by repeatedly applying the identity in Lemma~\ref{ref5}(2), while making use of the following facts (references being relative to the quoted paper).
\begin{enumerate}
\item Equation~(5.1), namely
\begin{equation}\label{eq4}
[a^pb^qa^tb^s]-[a^{p+t}b^{q+s}]=pqts\bigl([aba\m b\m]-2\bigr),
\end{equation}
(in the parabolic case, the Chebychev polynomials $\alpha_k,\beta_k$ of~\cite[\S2]{jorgensensmith90} are both equal to $k$).
By explicit computation, in our case we have
\begin{align*}
[a^pb^qa^tb^s] &= 2+(p+t)(q+s)r^2+pqtsr^4,\\
[a^{p+t}b^{q+s}] &= 2+(p+t)(q+s)r^2, \\
[aba\m b\m] &= r^4+2,
\end{align*}
and~\eqref{eq4} remains true.
\item Lemma~5-2, which carries through.
\item Lemma~4-3, which is only used through the inequality $[a^pb^q]>[a^{p-1}b^{q-1}]$; by direct computation one easily checks that this inequality still holds.
\end{enumerate}

Having proved our claim, we may safely assume that all appearances of $b$ in $u$ are isolated. Since by assumption $u$ is not a power of $ab$ or of $ba$, not all occurrences of $a$ are isolated; therefore, up to a rotation, we have
\[
u=a^{k_1}ba^{k_2}\cdots ba^{k_t},
\]
for some $t\ge2$ and $k_1,\ldots,k_t\ge1$.
We shall show that $\upsilon^+$ is in the interior of $I=[0,1]$ and $\upsilon^-$ in the interior of $[-1,0]$.
Let $k\ge1$; it is clear that $A^k[I]\subset I$ both in the parabolic and in the hyperbolic case. We also have $A^kB[I]\subset I$; indeed, it suffice to consider $k=1$. In the parabolic case one easily computes
\[
AB[I]=\biggl[\frac{r}{r^2+1},\frac{r+1}{r^2+r+1}\biggr]\subset I.
\]
The hyperbolic case reduces to the computation of $AB*1$, since $0<AB*0<AB*1$ anyway. Let $AB\cppvector{1}{1}=\cppvector{s}{t}\in\Rbb\pp^2$. Then a little help from SageMath establishes that
\begin{align*}
t-s &= \sinh(-d)+\frac{1}{2}\sinh(d+l)+\frac{1}{2}\sinh(d-l) \\
&= \sinh(-d)+\sinh(d)\cosh(l) \\
&= \sinh(d)\bigl(\cosh(l)-1\bigr)>0,
\end{align*}
and thus $AB*1<1$. As in the proof of Lemma~\ref{ref8} we obtain
$A^{k_1}BA^{k_2}\cdots BA^{k_t}[I]\subset I$, and we conclude that $\upsilon^+$ is in the interior of $I$.

The same argument, applied to the reversal $v$ of $u$, shows that the attracting fixed point of $\phi(v)$ is in the interior of $I$ as well. Letting $J=\ppmatrix{-1}{}{}{1}$, we see that $J$ conjugates $A$ with $A\m$ and $B$ with $B\m$, so that $\phi(v)=J\phi(u\m)J$. This implies that $\upsilon^-$ is the $J$-image of this attracting fixed point, and thus is in the interior of $[-1,0]$.

We now exchange $a$ with $b$ in $u$, obtaining $u'$. This corresponds to conjugating $A$ and $B$ by $F=\ppmatrix{}{1}{1}{}$; in particular, the attracting fixed point of $U'=\phi(u')$ is in the interior of $[1,\infty]$, and the repelling one in $[\infty,-1]$. As a consequence, the translation axes of $U$ and of $U'$ are ultraparallel.

By Lemma~\ref{ref5}(1) we have $\ell(U)=\ell(U')$, and by Theorem~\ref{ref3} $\ell(UU')>2\ell(U)=\ell(U^2)$. We have thus found a word, namely $uu'$, that strictly dominates $u$ in the $\preceq$ preorder. Since $uu'$ contains the same number $n=n_a+n_b$ of occurrences of $a$ and of $b$, we apply again our Claim above and infer $uu'\preceq ab$. This yields $u\prec ab$, as required.
\end{proof}

\section{Counterexamples}\label{ref11}

We have thus proved (I), (II), (III) in~\S\ref{ref1}, covering all cases in which $A,B$ are coherently oriented but not well oriented.
From now on we restrict attention to well oriented
matrices with integer entries, and prove~(IV); our tools, and the overall tone of our paper, will perceptibly move from geometry to combinatorics. This is unavoidable, since finiteness counterexample do indeed exist for well oriented pairs in $\SL_2\Rbb$, wildly popping out as the pair varies smoothly in certain $1$-parameters families of perfectly tame well oriented translations; this is the case, e.g. of the Morris example in~\cite[\S2.2]{oregon-reyes18}. In order to make this tone shift more palatable to the reader, we begin by providing counterexamples, that is, by discussing~(IV.4).

Let us first note that any triple $(r,s,t)\in\Rbb_{\ge2}^3$ determines uniquely up to conjugation a pair $A,B\in\SL_2\Rbb$ 
such that $\tr(A)=r$, $\tr(B)=s$, $\tr(AB)=t$; indeed such triples give coordinates for the Teichm\"uller space of hyperbolic pair of pants.
Let us fix $\tr(A)=101/50$, and vary $\tr(B)=x$ in the interval $[101/50,113/50]$. Adjusting the distance between the axes we may impose that the difference
\[
\Delta=\tr(AB)-\tr(B^2)
\]
be constant, in particular equal to $0$ or to any small positive or negative number; once $\Delta$ is fixed we can compare words in $F_2^+$ in the $\preceq$ order.

We fix $\Delta=0$, so that $\ttr{ab}=\ttr{b^2}=x^2-2$, and compare $ab^2$ with $ab^3$. We must compute $\bigl[(ab^2)^4\bigr]$ and $\bigl[(ab^3)^3\bigr]$.
We have $[ab^2]=[ab][b]-[a]=(x^2-2)x-101/50$ and
$\bigl[(ab^2)^4\bigr]=T_4\bigl([ab^2]\bigr)$, where $T_k(y)$ is the 
degree~$k$ polynomial defined recursively by $T_0(y)=2$, $T_1(y)=y$, $T_k(y)=yT_{k-1}(y)-T_{k-2}(y)$. Thus 
$\bigl[(ab^2)^4\bigr]$ is a polynomial in $x$ of degree $12$, and so is $\bigl[(ab^3)^3\bigr]$. Explicit computation gives
\begin{multline*}
\bigl[(ab^2)^4\bigr]-\bigl[(ab^3)^3\bigr]=
x^{10} - 101/50 x^9 - 9 x^8 + 303/25 x^7\\
+ 98103/2500 x^6 - 909/50 x^5 - 46103/500 x^4 - 2080903/125000 x^3\\
+ 105559/1250 x^2 + 1618727/31250 x + 2050401/6250000,
\end{multline*}
whose graph appears in Figure~\ref{fig2}.
\begin{figure}
\includegraphics[width=7.5cm]{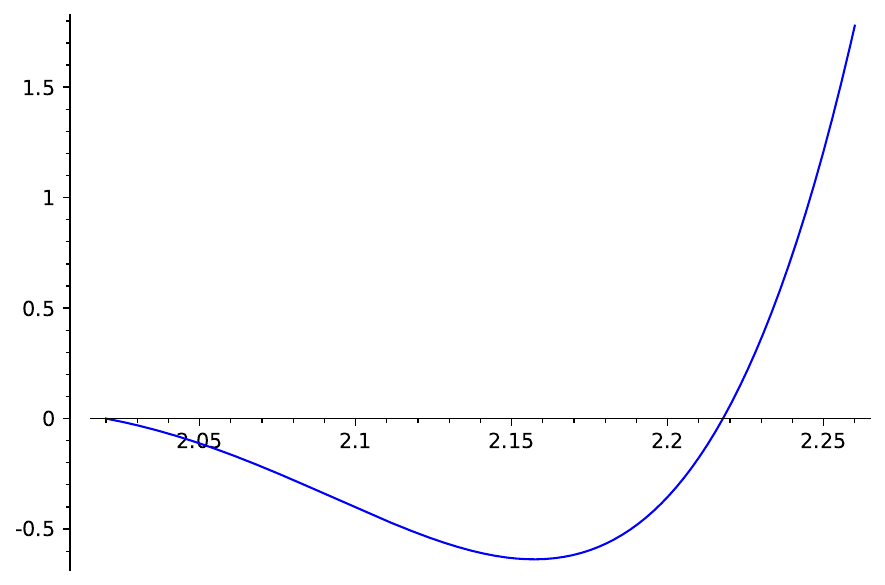}
\caption{Graph of $\bigl[(ab^2)^4\bigr]-\bigl[(ab^3)^3\bigr]$
as a function of $\ttr{b}$.}
\label{fig2}
\end{figure}
Therefore, for $x$ ranging in an appropriate interval, we have $ab^2\prec ab^3$ and the word $ab^2$ is not maximal, contrary to~(IV.2).

Fix now $\Delta=-1/50$, so that $\tr(AB)$ is slightly less than $\tr(B^2)$. Then $[ab^2]=(x^2-2-1/50)x-101/50$, while $[b^3]=T_3(x)=x^3-3x$. Thus, for large $x$, the word $ab^2$ dominates $b$, contrary to~(IV.1).

Finally, let $\Delta=1/50$; by analogous computations we obtain
\begin{align*}
\bigl[(ab)^3\bigr]&=x^6 - 297/50 x^4 + 21903/2500 x^2 - 227799/125000,\\
\bigl[(ab^2)^2\bigr]&=x^6 - 99/25 x^4 - 101/25 x^3 + 9801/2500 x^2 \\
&\quad + 9999/1250 x + 5201/2500,
\end{align*}
which have the same value at $x_0\sim 2.0255364739899213\ldots$.
We compare the two words $ab$ and $ab^2$ with their concatenation $abab^2$ by computing the
differences $\bigl[(ab)^5\bigr]-\bigl[(abab^2)^2\bigr]$ and
$\bigl[(ab^2)^5\bigr]-\bigl[(abab^2)^3\bigr]$, which are polynomials in $x$ of degree $8$ and $13$, respectively. These polynomials are negative at $x_0$, so both $ab$ and $ab^2$ are strictly dominated by $abab^2$, and~(IV.3) fails.

\section{Integer matrices and Case~(IV.1)}\label{ref12}

Since in Theorems~\ref{ref7}, \ref{ref9}, \ref{ref10} we covered the case in which $A$ and $B$ have equal trace, we assume from now on, without loss of generality, that $A,B\in\SL_2\Zbb$ are well oriented and satisfy $2\le\tr(A)<\tr(B)$.
For completeness's sake we provide one specimen for each of the cases (IV.1)--(IV.3) of the Introduction; let~$L$ be as in Example~\ref{ref20} and $N=\ppmatrix{1}{1}{}{1}$. Then we have the following examples:
\begin{align*}
&A=L,B=LN & & &\tr(AB)=4<7=\tr(B^2),\\
&A=LNL,B=NLN^3 & & &\tr(AB)=34=\tr(B^2),\\
&A=L^3N,B=N^2LN^2 & & &\tr(AB)=40>34=\tr(B^2)\\
& & & &\tr((AB)^3)=63880>55223=\tr((AB^2)^2),\\
&A=L^{11},B=LNL & & &\tr(AB)=15>14=\tr(B^2)\\
& & & &\tr((AB)^3)=3330<3362=\tr((AB^2)^2).
\end{align*}

\begin{definition}
A \newword{subword} of the word $w\in F_2^+$ is a possibly empty word obtained from $w$ by deleting one or more not necessarily contiguous letters.
\end{definition}

Since we are now working with matrices having integer entries, the range of $\ttr{\argomento}$ is $\Zbb_{\ge2}$. The remark~(1) in the
following lemma is thus trivial, but key in our proof.

\begin{lemma}
\begin{enumerate}\label{ref13}
\item $\ttr{w}<\ttr{u}$ if and only if $\ttr{w}\le\ttr{u}-1$.
\item If $u$ is a subword of $w$ then $\ttr{u}<\ttr{w}$, exception being made for the case in which $A$ is parabolic and $w$ a power of $a$.
\end{enumerate}
\end{lemma}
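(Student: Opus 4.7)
Part (1) follows immediately from $A,B\in\SL_2\Zbb$: every $\phi(w)$ is an integer matrix, so $\ttr{w}\in\Zbb_{\ge2}$ and strict inequality between integers is the same as a gap of at least~$1$. For (2) I would induct on $|w|-|u|$, reducing to the case in which $w$ is obtained from $u$ by inserting a single letter. Cyclic rotation invariance of the trace (Lemma~\ref{ref5}(3)) lets me assume the insertion is at the end, so $w=ux$ with $X=\phi(x)\in\set{A,B}$.

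The geometric case is $X$ hyperbolic, which is automatic for $X=B$ (since $\tr(B)>2$) and holds for $X=A$ when $A$ is hyperbolic. Suppose also $U=\phi(u)$ is hyperbolic. Then the attracting and repelling fixed points of $U$ and $X$ all lie in $I^+\cup I^-$ (Lemma~\ref{ref8}(1)), with attracting ones in $I^+$ and repelling ones in $I^-$; under the Case~(IV) hypothesis $I^+$ and $I^-$ are either disjoint (hyperbolic sub-case) or meet only in the single point $\alpha^+=\alpha^-$ fixed by the parabolic $A$. Lemma~\ref{ref8}(2) rules out the attracting points of $U,X$ coinciding (and likewise the repelling points) unless $u$ is a power of $x$; inspecting the cyclic order on $\partial\mc{H}$ then shows that the axes of $U$ and $X$ are either coincident (precisely when $u=x^k$, in which case $UX=X^{k+1}$ and $\ell(UX)=\ell(U)+\ell(X)>\ell(U)$) or ultraparallel, in which case Theorem~\ref{ref3} yields $\ell(UX)>\ell(U)+\ell(X)>\ell(U)$. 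Either way $\tr(UX)>\tr(U)$. If instead $U$ is parabolic, then $A$ must be parabolic and $u=a^k$; necessarily $X=B$, and Lemma~\ref{ref8} together with the fact that $B$ does not fix $\alpha^+$ under~(IV) forces $UB$ to be hyperbolic, so $\tr(UB)>2=\tr(U)$.

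The remaining possibility is $X=A$ parabolic. After conjugating via Lemma~\ref{ref19} so that both $A,B$ have nonnegative entries and $A=\ppmatrix{1}{t}{0}{1}$ with $t>0$, direct multiplication gives $\tr(UA)=\tr(U)+tU_{21}$. Since $B$ does not fix $\infty$ in Case~(IV) we have $B_{21}>0$, and left multiplication by $A^k$ preserves the $(2,1)$-entry; a short induction therefore yields $U_{21}>0$ if and only if $u$ contains at least one $b$. Thus $\tr(UA)>\tr(U)$ unless $u$ is a pure power of $a$---in which case $w=ua$ is also a power of $a$, $\tr(w)=2=\tr(u)$, and we are exactly in the stated exception. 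The main obstacle I anticipate is the cyclic-order verification in the previous paragraph: one has to check carefully that the Case~(IV) separation of $I^+$ and $I^-$ really does prevent the axes of $U$ and the chosen generator from interlocking for every $u\in F_2^+$, so that Theorem~\ref{ref3} is genuinely available.
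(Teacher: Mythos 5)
Your proof follows the same strategy as the paper's: reduce to insertion of a single letter via rotation invariance, invoke Theorem~\ref{ref3} when both factors are hyperbolic, and do a direct matrix computation in the parabolic case after normalizing to nonnegative entries via Lemma~\ref{ref19}. The cyclic-order check you flag as a possible obstacle is precisely the point the paper leaves implicit when it asserts that Theorem~\ref{ref3} gives $\ell(W)\ge\ell(C)+\ell(U)$, and your argument resolves it correctly: since in Case~(IV) $I^+$ and $I^-$ meet in at most the parabolic fixed point, and $\upsilon^{\pm},\xi^{\pm}$ lie in $I^{\pm}$ without coinciding (by Lemma~\ref{ref8}(2)), the two axes cannot interlock, so the inequality goes the right way.
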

\begin{proof}
In order to prove (2) we assume that we are not in the exceptional case, in which $\ttr{u}=\ttr{w}=2$. It suffices to consider the removal of a single letter $c$, which by rotation invariance we may assume being the first one; let then $w=cu$.
If both of $C$ and $U$ are hyperbolic then Theorem~\ref{ref3} applies, yielding
$\ell(W)\ge\ell(C)+\ell(U)$. Thus $\ell(W)>\ell(U)$ and $\ttr{w}>\ttr{u}$ by the remarks following Equation~\eqref{eq1}.

Suppose $C=A$ is parabolic; then, since we are not in the exceptional case, $U$ contains $B$ as a factor and is hyperbolic. Moreover, by Lemma~\ref{ref8}(2), neither of $\upsilon^+,\upsilon^-$ equals the fixed point of $C$.
By Lemma~\ref{ref19} we may assume $C,U\in\SL_2\Rbb\p$, and a further conjugation ---if needed-- by the matrix $F$ in the proof of Theorem~\ref{ref10} reduces us to the case
\[
C=\begin{pmatrix}
1 &  \\
r & 1
\end{pmatrix},\quad
U=\begin{pmatrix}
a & b \\
c & d
\end{pmatrix},
\]
with $r,a,d>0$ and $b,c\ge0$. Now, $c$ can be $0$ ---it is so precisely when one of $\upsilon^+,\upsilon^-$ equals $\infty$--- but $b$ cannot, because otherwise one of $\upsilon^+,\upsilon^-$ would equal the fixed point $0$ of $C$. We thus obtain $\ttr{w}=a+rb+d>a+d=[u]$. An analogous proof applies if $U$ is parabolic.
\end{proof}

\begin{lemma}
	For\label{circle} every integer $s\geq 2$ and every function $f: \mathbb Z/ 2s\mathbb Z \to \mathbb Z$,
	there exists $x$ satisfying both the following inequalities
	\begin{equation*}
		f(x)\geq f(x+2), \quad f(x+1)\le f(x+3).
	\end{equation*}
\end{lemma}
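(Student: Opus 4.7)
The plan is to reformulate the claim in terms of an auxiliary function and then argue by contradiction using a telescoping sum. Define $g \colon \Zbb/2s\Zbb \to \Zbb$ by $g(x) = f(x+2) - f(x)$. Then the two inequalities $f(x) \geq f(x+2)$ and $f(x+1) \leq f(x+3)$ become, respectively, $g(x) \leq 0$ and $g(x+1) \geq 0$, so the task reduces to finding a position $x$ at which $g$ is nonpositive and $g$ evaluated at $x+1$ is nonnegative.

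Suppose for contradiction that no such $x$ exists; equivalently, for every $x$ either $g(x) > 0$ or $g(x+1) < 0$. The contrapositive reads: $g(x) \leq 0$ implies $g(x+1) < 0$. Hence if any $x_0$ satisfies $g(x_0) \leq 0$, a straightforward induction along the cyclic group yields $g(x_0 + k) < 0$ for every $k \geq 1$, and traversing the cycle once shows $g(x_0) < 0$ as well; consequently $g$ is strictly negative on all of $\Zbb/2s\Zbb$. In the only remaining alternative, $g(x) > 0$ for every $x$.

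Both possibilities are ruled out by the telescoping identity
\[
\sum_{x \in \Zbb/2s\Zbb} g(x) = \sum_{x} f(x+2) - \sum_{x} f(x) = 0,
\]
which forces $g$ to take both nonpositive and nonnegative values. This contradiction establishes the lemma.

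There is no serious technical obstacle here: the essential observation is that the two a priori independent inequalities, one comparing the values of $f$ at consecutive even residues and the other at consecutive odd residues, couple into a single sign-transition condition on $g$ evaluated at two consecutive arguments. Once this coupling is noticed, the implication chain around the cycle and the vanishing of the total cyclic sum complete the argument in a few lines.
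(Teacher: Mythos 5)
Your proof is correct and takes essentially the same route as the paper, just packaged via the difference function $g(x)=f(x+2)-f(x)$: in both cases one assumes the negation, observes that a nonpositive value of $g$ forces a strictly negative value at the next step, and derives a contradiction from the cyclic structure. The paper's remark that a strictly monotone chain cannot close up on $\mathbb Z/2s\mathbb Z$ plays exactly the role of your telescoping identity $\sum_x g(x)=0$, which rules out $g$ having constant sign.
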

\begin{proof}
	Assume the negation of our statement: it says that $f(x)\geq f(x+2)$ implies $f(x+1)>f(x+3)$ for every $x$.
	This readily leads to a contradiction. Indeed $f(x)<f(x+2)<\cdots$
	cannot always increase; hence there must exist $y$ such that $f(y)\geq f(y+2)$. Repeatedly applying to $f(y)\geq f(y+2)$
	the negation of out statement
	leads to a strictly decreasing sequence $f(y+1) > f(y+3) > f(y+5) >\cdots$, and hence to a contradiction.
\end{proof}

\begin{lemma}
	If\label{1-4} $\ttr{ab}<\ttr{b^2}$ then $\ttr {ab^k}< \ttr{b^{k+1}}$, for every $k\geq 1$.
\end{lemma}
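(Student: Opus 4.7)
The plan is to exploit the linear-recurrence structure arising from the trace identity. First I would set $d_k = [b^{k+1}] - [ab^k]$ and observe, via Lemma~\ref{ref5}(4) applied with $u=b$, that both $([b^{k+1}])_k$ and $([ab^{k+1}])_k$ obey the same recurrence $x_{k+1} = [b]x_k - x_{k-1}$; consequently $d_{k+1} = [b]d_k - d_{k-1}$. The hypothesis $[ab] < [b^2]$, combined with Lemma~\ref{ref13}(1), gives the base case $d_1 \ge 1$, and the task reduces to propagating strict positivity to every $d_k$ with $k \ge 1$.

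Next I would bring in the characteristic roots. Since case~(IV) gives $[a]\ge 2$ and $[a]<[b]$ integers, we have $[b]\ge 3$ and $B$ is hyperbolic; set $\lambda = \rho(B) > 1$, so $[b] = \lambda + \lambda\m$. Rather than unwinding $d_k$ in the form $\alpha \lambda^k + \beta \lambda^{-k}$, it seems cleaner to introduce the companion sequence $e_k = \lambda d_k - d_{k-1}$: a short manipulation of the recurrence yields $e_{k+1} = \lambda e_k$, so $e_k = \lambda^{k-1} e_1$, and global positivity of $(e_k)_k$ reduces to positivity of the single quantity $e_1$.

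The crux is thus the inequality $\lambda d_1 > d_0$. Since $d_0 = [b]-[a] \le [b]-2$ while $\lambda = [b] - \lambda\m > [b]-1$, one has $\lambda > d_0$, and $d_1 \ge 1$ then forces $e_1 > 0$. This yields the uniform linear bound $d_{k-1} < \lambda d_k$ for all $k \ge 1$, and a single induction closes the argument: assuming $d_k \ge 1$, the recurrence gives $d_{k+1} = [b]d_k - d_{k-1} > [b]d_k - \lambda d_k = d_k/\lambda > 0$, and integrality (Lemma~\ref{ref13}(1) again) upgrades $d_{k+1} > 0$ to $d_{k+1} \ge 1$.

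The main obstacle I anticipate is that a naive induction directly on $d_{k+1} = [b]d_k - d_{k-1}$ stalls: without a priori control on $d_{k-1}$ relative to $d_k$, the sequence might in principle decrease and cross zero. Identifying the linear invariant $e_k$ is what breaks the loop; verifying $e_1 > 0$ is then where the hypotheses $[a] \ge 2$ and $d_1 \ge 1$ both enter essentially --- consistent with Section~\ref{ref11}, which exhibits counterexamples to the lemma when integer entries are dropped.
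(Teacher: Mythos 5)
Your proof is correct, and it takes a route genuinely different from the paper's. The paper proves the lemma by a direct induction on $k$: it rewrites $\ttr{ab^k}=\ttr{ab}\ttr{b^{k-1}}-\ttr{a}\ttr{b^{k-2}}+\ttr{ab^{k-2}}$, bounds $\ttr{ab}\le\ttr{b^2}-1$ and $\ttr{ab^{k-2}}<\ttr{b^{k-1}}$ by integrality and the inductive hypothesis, and telescopes against $\ttr{b^2}\ttr{b^{k-1}}=\ttr{b^{k+1}}+\ttr{b^{k-3}}$ to close the gap; the entire argument stays inside integer arithmetic with Chebyshev-type trace identities. You instead identify the common second-order recurrence $x_{k+1}=\ttr{b}x_k-x_{k-1}$, diagonalize it implicitly via the characteristic root $\lambda=\rho(B)$, and observe that $e_k=\lambda d_k-d_{k-1}$ is a geometric invariant with ratio $\lambda$, so positivity of the whole sequence $(d_k)$ is governed by the single inequality $e_1>0$. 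That inequality is exactly where the hypotheses $d_1\ge1$ (integrality) and $\ttr{a}\ge2$ enter, via $\lambda>\ttr{b}-1\ge d_0$. The two approaches use the same inputs but package them differently: the paper's is elementary and self-contained within the trace calculus, with no recourse to the irrational $\lambda$; yours makes the role of integrality conceptually transparent (it feeds a spectral-gap argument at the single step $e_1>0$) and localizes exactly where nonintegral counterexamples can arise, which is a nice conceptual bonus. One small remark: once $e_k>0$ is established for all $k\ge1$, the final induction only needs to propagate $d_k>0$ (not $d_k\ge1$), since $d_{k+1}>d_k/\lambda>0$ already suffices; the integrality upgrade at the end is harmless but not strictly required.
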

\begin{proof}
We work by induction. The case $k=1$ is by hypothesis, and for $k=2$ we have
\[
\ttr{ab^2} = \ttr{ab}\ttr{b}-\ttr{a}
\le \bigl(\ttr{b^2}-1\bigl)\ttr{b} -\ttr{a}< \ttr{b^3} - 1.
\]
Let $k> 2$; repeatedly applying Lemma~\ref{ref5}(2) to the left side we obtain
\begin{align*}
\ttr{ab^k} &= \ttr{ab}\ttr{b^{k-1}} - \ttr{ab^{2-k}}\\
& = \ttr{ab}\ttr{b^{k-1}} - \ttr{a}\ttr{b^{k-2}} + \ttr{ab^{k-2}}\\
& < \bigl(\ttr{b^2}-1\bigr)\ttr{b^{k-1}} - \ttr{b^{k-2}} + \ttr{b^{k-1}}\\
& = \ttr{b^{k+1}} + \ttr{b^{k-3}} -\ttr{b^{k-1}} - \ttr{b^{k-2}} + \ttr{b^{k-1}}\\
& = \ttr{b^{k+1}} + \ttr{b^{k-3}} - \ttr{b^{k-2}}\\
& \le \ttr{b^{k+1}} -1.
\end{align*}
Here the third line follows by induction hypothesis, and the last one from $\ttr{b^{k-3}}<\ttr{b^{k-2}}$, which is valid for $k>2$.
\end{proof}

\begin{lemma}
Among\label{4-5-6} all words of fixed length, the trace-maximizing ones do not contain the factor $a^2$.
\end{lemma}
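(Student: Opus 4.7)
The plan is to argue by contradiction: assume $w$ is a trace-maximizing word of length $n$ containing the factor $a^2$. First, $w$ cannot equal $a^n$, since the standing hypothesis $\tr(A)<\tr(B)$ forces $[b^n]>[a^n]$; hence $w$ contains at least one occurrence of $b$. Writing $(n_a,n_b)$ for the letter counts of $w$, I would then observe that $w$ is in particular trace-maximal within the class $\mc{W}(n_a,n_b)$ of words with those specific letter counts, so I can invoke the \emph{Claim} proved inside Theorem~\ref{ref10}: when $n_a\ge n_b$, every $b$ in a trace-maximizer of $\mc{W}(n_a,n_b)$ is isolated. That Claim depends only on identity~\eqref{eq4} and on the strict inequality $[aba^{-1}b^{-1}]>2$, which holds in the present case~(IV) (well-oriented pair with ultraparallel axes, or the parabolic-hyperbolic analogue; in either situation the group generated is a Schottky-type free group and the commutator is hyperbolic).

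Applying the Claim with the roles of $a$ and $b$ swapped: if $n_b\ge n_a$, then every $a$ in $w$ would be isolated, contradicting $a^2\subseteq w$. Hence $n_a>n_b$, and the Claim in its direct form places $w$, cyclically, in the shape
\[
w=a^{k_1}b\,a^{k_2}b\cdots a^{k_{n_b}}b,\qquad k_i\ge1,\qquad\sum_i k_i=n_a>n_b,
\]
so some $k_i\ge2$; rotating if needed I may take $k_1\ge2$. The next step is to exhibit a word $w^\star$ of the same length $n$ with $[w^\star]>[w]$, contradicting the maximality of $w$. The natural candidate lives in the adjacent class $\mc{W}(n_a-1,n_b+1)$: take $w^\star$ to be the trace-maximizer of that class, whose block structure is once more dictated by the Claim (with the appropriate minority letter isolated).

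The main obstacle is verifying $[w^\star]>[w]$. The Fricke identities of Lemma~\ref{ref5}, applied symbolically to differences of the type $[bX]-[aX]$ with $X=a^{k_1-1}bv$, collapse into tautologies, so one genuinely needs extra quantitative input. I expect to extract it by combining the integrality bound $[b]\ge[a]+1$ from Lemma~\ref{ref13}(1) with the explicit nonnegative-entry representatives guaranteed by Lemma~\ref{ref19}, so as to perform an entrywise comparison of $\phi(w^\star)$ and $\phi(w)$; alternatively, by an inductive surgery that iteratively moves one $a$ from the largest $a$-block into a freshly created isolated $b$-block, each step strictly increasing the trace by a positive multiple of $[aba^{-1}b^{-1}]-2$ via a direct application of~\eqref{eq4}. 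This last comparison is where the bulk of the technical work resides, and it is the step that requires genuinely new input beyond the Jorgensen--Smith argument already used in~\S\ref{ref4}.
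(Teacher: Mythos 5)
There is a genuine gap, and in fact you talked yourself out of the correct argument. You assert that ``the Fricke identities of Lemma~\ref{ref5}, applied symbolically to differences of the type $[bX]-[aX]$\ldots collapse into tautologies,'' and therefore go looking for much heavier machinery (the Jorgensen--Smith Claim from Theorem~\ref{ref10}, a cross-class comparison between $\mc{W}(n_a,n_b)$ and $\mc{W}(n_a-1,n_b+1)$, and an unresolved ``inductive surgery''). But this one-letter swap is \emph{exactly} what works, and it does not collapse into a tautology once you feed in the two available quantitative inputs: integrality ($\ttr{b}\ge\ttr{a}+1$, Lemma~\ref{ref13}(1)) and the strict subword inequality (Lemma~\ref{ref13}(2)). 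Concretely, if $w$ contains $a^2$, rotate so that $w=bua^2$, and compare it with $buab$, a word of the same length. By Lemma~\ref{ref5}(2) and rotation invariance, $\ttr{buab}=\ttr{bua}\ttr{b}-\ttr{ua}$ and $\ttr{bua^2}=\ttr{bua}\ttr{a}-\ttr{bu}$, so
\[
\ttr{buab}-\ttr{bua^2}=\ttr{bua}\bigl(\ttr{b}-\ttr{a}\bigr)-\ttr{ua}+\ttr{bu}\ge\ttr{bua}-\ttr{ua}+\ttr{bu}>0,
\]
the last step because $\ttr{ua}<\ttr{bua}$ and $\ttr{bu}>0$. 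That single displaced inequality already forces the contradiction, and no appeal to the Jorgensen--Smith Claim, to nonnegative-entry representatives, or to an iterative block-moving procedure is needed.

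Beyond abandoning the direct computation prematurely, your proposal has two further soft spots. First, the Claim inside Theorem~\ref{ref10} is established there under the standing hypothesis $\tr(A)=\tr(B)$, via specific normal forms; re-using it in case~(IV), where $\tr(A)<\tr(B)$, would require re-verifying the quoted Jorgensen--Smith ingredients (their Lemmas~5-2 and~4-3), not merely the hyperbolicity of the commutator, and you do not carry that out. Second, and more seriously, your plan terminates with an open problem: you yourself write that the comparison $[w^\star]>[w]$ ``is where the bulk of the technical work resides,'' offer only two unverified directions, and leave both unexecuted. As written the proposal does not constitute a proof.
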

\begin{proof}
Since $\ttr{a}<\ttr{b}$, no such word $w$ can be a power of $a$.
Assume that $w$ contains $a^2$. Then, up to a rotation, $w=bua^2$, and it is enough to prove $\ttr {bua^2}< \ttr{buab}$.
The right side equals $\ttr{bua} \ttr{b}-\ttr{ua}$, and the other side $\ttr{bua} \ttr{a}-\ttr{bu}$. The difference is
then greater than
$\ttr{bua}-\ttr{ua}+\ttr{bu}$, which is strictly positive by Lemma~\ref{ref13}(2).
\end{proof}

\begin{lemma} \label{41}
If $\ttr{ab}<\ttr{b^2}$ then, for every $s,k_1,\cdots,k_s\geq 1$,
we have
\begin{equation*}
\ttr {ab^{k_1}\cdots ab^{k_s}} < \ttr {b^{k_1+\cdots +k_s+s}}.
\end{equation*}
\end{lemma}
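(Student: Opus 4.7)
The plan is to induct on $s$. The base case $s=1$ is exactly Lemma~\ref{1-4}, which gives $\ttr{ab^{k_1}}<\ttr{b^{k_1+1}}$.

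For the inductive step $s\ge 2$, I would first exploit the rotation invariance of $\ttr{\,\cdot\,}$ (Lemma~\ref{ref5}(3)) to cyclically shift $(k_1,\ldots,k_s)$ so that $k_s=\min_i k_i$, in particular securing $k_{s-1}\ge k_s$. Then I split $w=uv$ with $u=ab^{k_1}\cdots ab^{k_{s-1}}$ and $v=ab^{k_s}$ and invoke
\[
\ttr{w}=\ttr{u}\ttr{v}-\ttr{uv\m}
\]
(Lemma~\ref{ref5}(2)).

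The crux is a lower bound on $\ttr{uv\m}$. Direct cancellation yields
\[
uv\m=ab^{k_1}\cdots ab^{k_{s-2}}\,ab^{k_{s-1}-k_s}\,a\m,
\]
and conjugation by $a$ turns this into $b^{k_1}ab^{k_2}\cdots ab^{k_{s-1}-k_s}$, a positive word precisely because $k_{s-1}\ge k_s$ (the boundary case $k_{s-1}=k_s$ simply absorbs the trailing $a\m$ against the last $a$, still leaving a positive word, possibly trivial when $s=2$ and $k_1=k_2$). By Lemma~\ref{ref8}(1) (and the trivial bound $\ttr{e}=2$), this forces $\ttr{uv\m}\ge 2$.

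Next I bound $\ttr{u}$ and $\ttr{v}$. The inductive hypothesis applied to $u$ and Lemma~\ref{1-4} applied to $v$, together with the integrality principle Lemma~\ref{ref13}(1), give
\[
\ttr{u}\le\ttr{b^{m-1-k_s}}-1,\qquad \ttr{v}\le\ttr{b^{k_s+1}}-1.
\]
Multiplying, and using the identity $\ttr{b^p}\ttr{b^q}=\ttr{b^{p+q}}+\ttr{b^{|p-q|}}$ (a special case of Lemma~\ref{ref5}(2)), I would obtain
\[
\ttr{u}\ttr{v}\le\ttr{b^m}+\ttr{b^{|m-2-2k_s|}}-\ttr{b^{m-1-k_s}}-\ttr{b^{k_s+1}}+1.
\]
Observe that $B$ must be hyperbolic (else $\ttr{b^2}=2$ and the hypothesis $\ttr{ab}<\ttr{b^2}$ is violated), so writing $\rho=\rho(B)>1$ and $\ttr{b^n}=\rho^n+\rho^{-n}$ the function $n\mapsto \ttr{b^n}$ is strictly increasing on $n\ge 0$. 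A one-line check then shows $\ttr{b^p}+\ttr{b^q}>\ttr{b^{|p-q|}}$ for $p,q\ge 1$; applying this to $p=m-1-k_s$ and $q=k_s+1$, integrality promotes it to $\ttr{b^{|m-2-2k_s|}}-\ttr{b^{m-1-k_s}}-\ttr{b^{k_s+1}}\le-1$, hence $\ttr{u}\ttr{v}\le\ttr{b^m}$. Combined with $\ttr{uv\m}\ge 2$, this yields $\ttr{w}\le\ttr{b^m}-2<\ttr{b^m}$.

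The only real obstacle is the preparatory rotation that places $k_s$ at the minimum: without it, $uv\m$ genuinely involves a negative $b$-power after cyclic reduction, and the pivotal bound $\ttr{uv\m}\ge 2$ can fail. Once this step is in place, the proof is a systematic combination of the previously established lemmas.
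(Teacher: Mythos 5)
Your proof is correct and takes essentially the same route as the paper: induction on $s$, a rotation to ensure the trailing exponent is dominated (you take $k_s$ minimal, the paper only requires $k_{s-1}\ge k_s$), the split $\ttr{w}=\ttr{u}\ttr{v}-\ttr{uv\m}$ with $v=ab^{k_s}$, positivity of $\ttr{uv\m}$ by cyclic reduction to a positive word, and bounding $\ttr{u}\ttr{v}$ via the inductive hypothesis, Lemma~\ref{1-4}, and the integrality step $\ttr{\cdot}\le\ttr{b^{\cdot}}-1$. The only cosmetic difference is in the closing arithmetic --- you tidy up with $\ttr{b^p}+\ttr{b^q}>\ttr{b^{\abs{p-q}}}$ promoted by integrality to get $\ttr{u}\ttr{v}\le\ttr{b^m}$, whereas the paper simply discards the strictly negative quantity $\ttr{b^{p-q}}-\ttr{b^p}$ --- but the two computations are equivalent.
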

\begin{proof}
	We work by induction on $s$, the case $s=1$ having been proved in Lemma \ref{1-4}. Let $s\geq 2$. We can
	suppose $k_{s-1}\geq k_{s}$, which ensures that
	\begin{align*}
		\ttr {ab^{k_1}\cdots ab^{k_{s-1}}(ab^{k_s})^{-1}}=\ttr{ b^{k_1}\cdots ab^{k_{s-1}-k_s}}
	\end{align*}
	is positive. We thus obtain
	\begin{align*}
		\ttr {ab^{k_1}\cdots ab^{k_s}} &<\ttr {ab^{k_1}\cdots ab^{k_{s-1}}} \ttr {ab^{k_s}}\\
		& \le (\ttr{b^{k_1+\cdots +k_{s-1} + s-1}}-1)(\ttr{b^{k_s+1}}-1)\\
		& = \ttr{b^{k_1+\cdots +k_{s-1} + k_s + s}} + \ttr{b^{k_1+\cdots +k_{s-1} - k_s + s-2}}\\
		& \qquad -  \ttr{b^{k_1+\cdots +k_{s-1} + s-1}} -(\ttr{b^{k_s+1}}-1)\\
		&< \ttr{b^{k_1+\cdots +k_{s-1} + k_s + s}}-(\ttr{b^{k_s+1}}-1)\\
		&< \ttr{b^{k_1+\cdots +k_{s-1} + k_s + s}}.
	\end{align*}
\end{proof}

We can now prove~\S\ref{ref1}(IV.1).

\begin{theorem}
Let\label{ref15} $A,B\in\SL_2\Zbb\p$, and assume $\tr(A)<\tr(B)$ and $\tr(AB)<\tr(B^2)$. The $b$ is the only optimal word.
\end{theorem}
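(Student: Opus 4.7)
The plan is to establish a single sharp claim: for every word $w\in F_2^+$ of length $n\ge 1$, we have $\ttr{w}\le \ttr{b^n}$, with strict inequality unless $w=b^n$. Granted this, the remainder of the theorem is formal. By the definition of $\preceq$ with $\abs{b}=1$, the inequality says exactly $w\preceq b$ for every $w$, so $b$ is maximal; it is trivially Lyndon (length one). For uniqueness, any other optimal word $u$ would satisfy both $u\preceq b$ and $u\succeq b$ by mutual maximality, forcing $\ttr{u}=\ttr{b^{\abs{u}}}$; the sharp claim then imposes $u=b^{\abs{u}}$, which contradicts the Lyndon condition unless $\abs{u}=1$, i.e., $u=b$.

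The sharp claim is proved by identifying $b^n$ as the unique trace-maximizer among length-$n$ words. Existence of a maximizer is immediate from finiteness of $F_2^+$ at each fixed length. By Lemma~\ref{4-5-6}, any maximizer $w^*$ is $a^2$-free. I would then enumerate the possibilities up to rotation (which preserves trace by Lemma~\ref{ref5}(3)): either $w^*=b^n$, or $w^*=a^n$, or $w^*=ab^{k_1}\cdots ab^{k_s}$ with $s,k_i\ge 1$. The case $w^*=a^n$ is ruled out by the hypothesis $\ttr{a}<\ttr{b}$ combined with the monotonicity built into~\eqref{eq1}, which yields $\ttr{a^n}<\ttr{b^n}$ contrary to maximality. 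The third case is ruled out by Lemma~\ref{41}, which supplies exactly $\ttr{ab^{k_1}\cdots ab^{k_s}}<\ttr{b^{k_1+\cdots+k_s+s}}=\ttr{b^n}$. Hence $w^*=b^n$ is the unique trace-maximizer, delivering the strict inequality for every $w\ne b^n$.

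I do not anticipate real difficulties: all the combinatorial heavy lifting is already packaged in Lemmas~\ref{4-5-6} and~\ref{41}, and the present theorem is essentially their assembly together with the rotation-invariance of trace. The only mild subtlety is the separate handling of the trivial power cases $w=a^n$ and $w=b^n$, which fall outside the normal form addressed by Lemma~\ref{41}; both dispatch immediately from basic monotonicity of trace.
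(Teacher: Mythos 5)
Your proposal is correct and follows the paper's proof essentially verbatim: both take a trace-maximizer of fixed length, invoke Lemma~\ref{4-5-6} to forbid the (cyclic) factor $a^2$, and then invoke Lemma~\ref{41} to show that any surviving word containing an $a$ is strictly dominated by the pure power of $b$. You merely spell out the routine power cases and the uniqueness bookkeeping, which the paper leaves implicit.
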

\begin{proof}
Let $w$ be a word of length $n$, trace-maximizing among all words of the same length; we must prove that $w$ does not contain any $a$.
By Lemma~\ref{4-5-6}, $w$ does not contain $a^2$ as a factor. After a rotation we may apply Lemma~\ref{41}, and conclude that $w=b^n$.
\end{proof}

\section{Case (IV.2)}

The remaining cases~(IV.2) and (IV.3) are more involved and require a section each. The standing assumptions in this section for $A,B\in\SL_2\Zbb\p$ are $\tr(A)<\tr(B)$ and $\tr(AB)=\tr(B^2)$. They yield
\begin{equation*}
\ttr{ab} - \ttr{a}\ttr{b} = \ttr{b^2} -\ttr{a}\ttr{b} \geq  \ttr{b^2} - (\ttr{b}-1)\ttr{b}
= \ttr{b} - 2  \geq \ttr{a}-1 > 0,
\end{equation*}
or, equivalently, $\ttr{ab^{-1}}<0$. This will be useful several times.

\begin{lemma}
Fix\label{5A} a word $w$ and assume $s\geq 0$. Then we have:
	\begin{enumerate}
		\item $\ttr{wab(ab^2)^sab^3} < \ttr{wab^2(ab^2)^s ab^2}$;
		\item $\ttr{wab^3(ab^2)^sab} < \ttr{wab^2(ab^2)^s ab^2}$, if $w$ is empty or begins with $a$.
	\end{enumerate}
\end{lemma}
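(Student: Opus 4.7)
The plan is to apply Lemma~\ref{ref5}(4) with $u=b$---which in our setting takes the form $\ttr{Xb^2Y}=\ttr{XbY}\ttr{b}-\ttr{XY}$---to each side of each inequality, choosing the splits so that the common $\ttr{\cdot}\ttr{b}$-terms cancel. Setting $v=(ab^2)^s$, for part~(1) I would split the initial $b^2$ of the larger word and the final $b^2$ of the smaller word, obtaining
\begin{align*}
\ttr{wab^2vab^2} &= \ttr{wabvab^2}\ttr{b}-\ttr{wavab^2},\\
\ttr{wabvab^3}   &= \ttr{wabvab^2}\ttr{b}-\ttr{wabvab};
\end{align*}
subtracting reduces~(1) to $\ttr{wabvab}>\ttr{wavab^2}$. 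For part~(2) I would split symmetrically (the final $b^2$ on the left, and the first $b^2$ of the $b^3$-block on the right), obtaining
\begin{align*}
\ttr{wab^2vab^2} &= \ttr{wab^2vab}\ttr{b}-\ttr{wab^2va},\\
\ttr{wab^3vab}   &= \ttr{wab^2vab}\ttr{b}-\ttr{wabvab},
\end{align*}
which reduces~(2) to $\ttr{wabvab}>\ttr{wab^2va}$.

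I would then prove both reduced inequalities by induction on~$s$: at each inductive step, apply Lemma~\ref{ref5}(4) once more to peel off one $ab^2$-factor of $v$ from each side, reducing to the analogous statement with $s$ replaced by $s-1$. The unconditional identity $\ttr{bab}=\ttr{ab^2}$ (both equal $\ttr{b}\ttr{ab}-\ttr{a}$ by Lemma~\ref{ref5}(2)) provides the algebraic lubricant, while the standing assumption $\ttr{ab}=\ttr{b^2}$ enters through its equivalent form $\ttr{ab^{-1}}<0$ noted at the start of this section. The base case $s=0$ reduces~(1) to $\ttr{wabab}>\ttr{wa^2b^2}$ and~(2) to $\ttr{wabab}>\ttr{wab^2a}$; for these, expand $\ttr{wabab}=\ttr{wab}\ttr{ab}-\ttr{w}$ and $\ttr{wa^2b^2}=\ttr{wa^2}\ttr{b^2}-\ttr{wa^2b^{-2}}$ via Lemma~\ref{ref5}(2), then use $\ttr{ab}=\ttr{b^2}$ and the negativity of $\ttr{ab^{-1}}$ to control the residual mixed-sign terms.

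The main obstacle is the asymmetric condition on $w$ in part~(2). By cyclic invariance, $\ttr{wab^2a}=\ttr{awab^2}$, so the reduced inequality for~(2) becomes $\ttr{wabvab}>\ttr{awvab^2}$, whereas~(1) reads $\ttr{wabvab}>\ttr{wavab^2}$; these differ precisely in whether the isolated $a$ sits to the right or to the left of $w$. The hypothesis that $w$ is empty or begins with $a$ is exactly what guarantees that the cycled word $awvab^2$ starts with an $a^2$-block (or with a single $a$, when $w$ is empty), which is the structural feature that lets the positivity arguments from~(1) carry over. If $w$ instead began with $b$, the rotation would place a $b$ adjacent to the leading $a$, creating an $ab$-block whose trace contribution could defeat the inequality---this is precisely why the hypothesis is needed in~(2) but not in~(1).
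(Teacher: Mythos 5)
Your reduction step is correct and, after unwinding, coincides with the paper's: subtracting the two one-letter expansions reduces part~(1) to $\ttr{wab(ab^2)^sab}>\ttr{wa(ab^2)^sab^2}$ and part~(2) to $\ttr{wab(ab^2)^sab}>\ttr{wab^2(ab^2)^sa}$, which are exactly the inequalities the paper also arrives at (though it takes a slightly longer route, expanding $b^3=b\cdot b^2$ on one side and $b^2$ on the other and simplifying). Your cyclic rewriting of $\ttr{wab^2(ab^2)^sa}$ as $\ttr{awvab^2}$ using $ab^2\cdot(ab^2)^s=(ab^2)^s\cdot ab^2$ is also correct.

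The gap is in the proposed induction on $s$. Peeling one $ab^2$-factor from $v=(ab^2)^s$ via Lemma~\ref{ref5}(2)/(4) gives, for $P_s:=\ttr{wab(ab^2)^sab}-\ttr{wa(ab^2)^sab^2}$, a \emph{second-order} recurrence
\[
P_s=\ttr{ab^2}\,P_{s-1}-P_{s-2},
\]
not a reduction "to the analogous statement with $s$ replaced by $s-1$." Knowing $P_{s-1}>0$ does not by itself force $P_s>0$: the subtracted $P_{s-2}$ must be controlled, which would require strengthening the inductive hypothesis (for instance to $P_s\ge P_{s-1}>0$, which does close since $\ttr{ab^2}\ge2$). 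As written, the induction does not close. The paper avoids the issue altogether: it expands $P_s$ once more into
\[
\ttr{w(ab^2)^sab}\bigl(\ttr{ab}-\ttr{a}\ttr{b}\bigr)-\ttr{wb(ab^2)^{s-1}ab}+\ttr{wb^2(ab^2)^{s-1}ab}\ttr{b}+\ttr{wa(ab^2)^sa},
\]
which is a sum of nonnegative terms by the standing inequality $\ttr{ab}>\ttr{a}\ttr{b}$ and the strict subword inequality of Lemma~\ref{ref13}(2), with a one-line check at $s=0$ where the middle two terms collapse to $-\ttr{w}+\ttr{wb}\ttr{b}$. Your base-case sketch ("control the residual mixed-sign terms") is likewise left as a promissory note: after expanding, a term such as $\ttr{wab^{-1}}$ appears whose sign is not automatic, and you would need the same style of explicit bookkeeping the paper carries out. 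Either supply the monotone strengthening of the induction, or replace the induction by the paper's direct expansion into positive terms; the hypothesis on $w$ in part~(2) then enters concretely, not merely structurally, by making the leftover term $\ttr{wab(ab^2)^sba^{-1}}$ reduce after rotation to the trace of a genuine positive word.
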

\begin{proof}
We prove (1). We have
\begin{align*}
\ttr{wab(ab^2)^sab^3}&= \ttr{wab(ab^2)^sab}\ttr{b^2} -
\ttr{wab(ab^2)^sab^{-1}}\\
&=\ttr{wab(ab^2)^sab}\ttr{b}^2-2\ttr{wab(ab^2)^sab}\\
&\quad -\ttr{wab(ab^2)^sa}\ttr{b} + \ttr{wab(ab^2)^sab}\\
&=\ttr{wab(ab^2)^sab}\ttr{b}^2 - \ttr{wab(ab^2)^sab} - \ttr{wab(ab^2)^sa}\ttr{b},
\end{align*}
and, by Lemma~\ref{ref5}(4),
\begin{align*}
\ttr{wab^2(ab^2)^sab^2} &= \ttr{wab(ab^2)^sab^2}\ttr{b} - \ttr{wa(ab^2)^sab^2}\\
& =  \ttr{wab(ab^2)^sab}\ttr{b}^2 - \ttr{wab(ab^2)^sa}\ttr{b} - \ttr{wa(ab^2)^s ab^2}.
\end{align*}
Subtracting the first end result from the second we get
\begin{align*}
\ttr{wab(ab^2)^sab} - \ttr{wa(ab^2)^sab^2} 
    & = \ttr{w(ab^2)^sab}\ttr{ab} -\ttr{w(ab)\m(ab^2)^s ab}\\
    &\quad - \ttr{wa(ab^2)^sab}\ttr{b} +\ttr{wa(ab^2)^sa} \\
    & =	\ttr{w(ab^2)^sab}\bigl(\ttr{ab}-\ttr{a}\ttr{b}\bigr) -\ttr{wb(ab^2)^{s-1}ab} \\
    &\qquad  + \ttr{wb^2(ab^2)^{s-1}ab}\ttr{b} +\ttr{wa(ab^2)^sa}.			
	\end{align*}
If $s\geq 1$ this is strictly positive by the observation preceding the lemma and Lemma~\ref{ref13}(2).
This also holds when $s=0$, since the sum of the two middle terms
becomes $-\ttr{w}+\ttr{wb}\ttr{b}>0$.

The proof of (2) is similar, except that in the second expansion we work on the last $ab^2$. We have
\begin{align*}
\ttr{wab^3(ab^2)^sab}
& = \ttr{wab(ab^2)^sab}\ttr{b^2} - \ttr{wab^{-1}(ab^2)^sab}\\
& = \ttr{wab(ab^2)^sab}\ttr{b}^2 - 2\ttr{wab(ab^2)^sab}\\
& \quad -\ttr{wa(ab^2)^sab}\ttr{b} + \ttr{wab(ab^2)^sab}\\
& = \ttr{wab(ab^2)^sab}\ttr{b}^2 - \ttr{wab(ab^2)^sab} - \ttr{wa(ab^2)^sab}\ttr{b},
\end{align*}
and
\begin{align*}
\ttr{wab^2(ab^2)^sab^2} &= \ttr{wab^2(ab^2)^sab}\ttr{b} - \ttr{wab^2(ab^2)^sa}\\
& =  \ttr{wab(ab^2)^sab}\ttr{b}^2 - \ttr{wa(ab^2)^sab}\ttr{b} - \ttr{wab^2(ab^2)^s a}.
\end{align*}
As above, subtracting the two end results we get
\begin{align*}
\ttr{wab&(ab^2)^sab} - \ttr{wab^2(ab^2)^s a} \\
& = \ttr{wab(ab^2)^sab} - \ttr{wab(ab^2)^s a}\ttr{b} + \ttr{wa(ab^2)^s a}\\
& = \ttr{wab(ab^2)^sa}\ttr{b} - \ttr{wab(ab^2)^sab^{-1}} - \ttr{wab(ab^2)^s a}\ttr{b} + \ttr{wa(ab^2)^s a}\\
& = \ttr{wa(ab^2)^s a} - \ttr{wab(ab^2)^sab^{-1}} \\
& = \ttr{wa(ab^2)^s a} - \ttr{wab(ab^2)^s}\ttr{ab^{-1}} + \ttr{wab(ab^2)^sb a^{-1}}.
\end{align*}
Since $\ttr{ab^{-1}}<0$ and $w$ is empty or beginning with $a$, this is positive.
\end{proof}

\begin{lemma}
Under\label{5B} the same hypotheses of Lemma~\ref{5A} we have:
\begin{enumerate}
\item $\ttr{wab(ab^2)^sab^4} < \ttr{wab^2(ab^2)^s ab^3}$,
\item $\ttr{wab^4(ab^2)^sab} < \ttr{wab^3(ab^2)^s ab^2}$, if $w$ is empty or begins with $a$.
\end{enumerate}
\end{lemma}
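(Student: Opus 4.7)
The plan is to reduce both parts of Lemma~\ref{5B} to the corresponding part of Lemma~\ref{5A} by word-level manipulations that preserve the trace, namely cyclic rotation and reversal (Lemma~\ref{ref5}(3)).

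For part~(1), I would apply Lemma~\ref{5A}(1) with $w$ replaced by $bw$, obtaining
$\ttr{bwab(ab^2)^sab^3} < \ttr{bwab^2(ab^2)^sab^2}$.
Cyclically rotating the leading letter $b$ to the end of the word on each side transforms this into
$\ttr{wab(ab^2)^sab^4} < \ttr{wab^2(ab^2)^sab^3}$,
which is exactly part~(1). No assumption on $w$ is used.

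For part~(2), I would combine reversal with rotation. Using $\overline{ab} = ba$, $\overline{(ab^2)^s} = (b^2 a)^s$, and the elementary word identity $a(b^2 a)^s = (ab^2)^s a$, the reversal of $wab^4(ab^2)^s ab$ simplifies to $b(ab^2)^s ab^4 a \overline w$, which by cyclic rotation equals $a\overline w b(ab^2)^s ab^4$. The analogous calculation for the right-hand side produces $\ttr{wab^3(ab^2)^s ab^2} = \ttr{a\overline w b^2 (ab^2)^s ab^3}$. Under the stated hypothesis that $w$ is empty or begins with $a$, write $w = aw''$ when $w$ is non-empty (and $w' = \emptyset$ when $w$ itself is empty); then $a\overline w b = a\overline{w''} ab$ and $a\overline w b^2 = a\overline{w''} ab^2$, so the two expressions above fit the template of part~(1) with $w' = a\overline{w''}$. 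The inequality of part~(2) then follows by invoking part~(1) with this $w'$.

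The main obstacle is spotting the correct word manipulation. Once found, each part reduces to essentially one line of symbolic computation and no further trace identities are needed. The hypothesis on $w$ in part~(2) is precisely what is required for $a\overline w b$ to end with the letters $ab$, so that the reversed-and-rotated word matches the part~(1) template; if $w$ started with $b$, this match would fail, consistent with the explicit restriction in the statement.
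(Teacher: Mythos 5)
Your proof is correct and takes essentially the same route as the paper: part~(1) by applying Lemma~\ref{5A}(1) to $bw$ and cyclically rotating the leading $b$ to the end, and part~(2) by reversing the word (Lemma~\ref{ref5}(3)), rotating, and invoking part~(1), with the hypothesis that $w$ is empty or begins with~$a$ being exactly what makes $a\widetilde{w}$ end with $a$ so that the part~(1) template applies.
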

\begin{proof}
(1) follows by Lemma~\ref{5A}(1), applied to the word $bw$. 
(2) Write $\widetilde w$ for the reversal of $w$. Then by Lemma~\ref{ref5}(3) we obtain
\begin{align*}
\ttr{wab^4(ab^2)^sab} &= \ttr{ba(b^2a)^sb^4a\widetilde{w}}\\
&= \ttr{a\widetilde{w}b(ab^2)^sab^4}\\
&< \ttr{a\widetilde{w}b^2(ab^2)^sab^3}
\tag{\text{by (1)}}\\
&= \ttr{wab^3a(b^2a)^sb^2}\\
&= \ttr{wab^3(ab^2)^sab^2}.
\end{align*}
Note that the use of~(1) in the third line is legitimate, since
$\widetilde{w}$ ends with $a$, or is empty.
\end{proof}

\begin{lemma}
Let\label{5C} $w$ be a word that is empty or ends with $b$, and let $k,h\geq 0$.
Then we have
\[
\ttr{ab^2wab(ab^2)^kab(ab^2)^hab} < \ttr{ab^2w(ab^2)^{k+h+2}}.
\]
\end{lemma}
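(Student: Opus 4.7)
The plan is to prove Lemma~\ref{5C} by induction on $k$, with the base case $k=0$ handled by a further induction on $h$. The main algebraic tools, as in Lemmas~\ref{5A} and~\ref{5B}, are the identities $\ttr{uv}=\ttr{u}\ttr{v}-\ttr{uv\m}$ and $\ttr{wuv}=\ttr{wv}\ttr{u}-\ttr{wu\m v}$ of Lemma~\ref{ref5}(2) and~(4), combined with the standing hypothesis $\ttr{ab}=\ttr{b^2}$ (equivalently $\ttr{ab\m}<0$, as noted at the start of this section).

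For the inductive step $k\ge 1$, apply Lemma~\ref{ref5}(4) with the middle $ab$ of the left-hand side as the distinguished factor. Using the simplification $(ab)\m(ab^2)^k=b(ab^2)^{k-1}$ (valid since $k\ge 1$), one rewrites $\ttr{ab^2wab(ab^2)^kab(ab^2)^hab}$ as a combination, with coefficients $\ttr{ab}=\ttr{b^2}$ and $-1$, of a two-$ab$ trace and a three-$ab$ trace of the same shape as the LHS but with $k$ replaced by $k-1$ and with an adjusted prefix (still beginning in $ab^2$ and ending in $b$). Applying the Chebyshev-type recursion
\[
\ttr{P(ab^2)^{n+1}}=\ttr{ab^2}\ttr{P(ab^2)^n}-\ttr{P(ab^2)^{n-1}}
\]
to the right-hand side with $P=ab^2w$, and invoking the inductive hypothesis on the reduced three-$ab$ trace, one reduces the whole comparison to an inequality between two-$ab$ traces that falls under Lemma~\ref{5A}(1) applied with parameter $s=k+h$.

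For the base case $k=0$, the LHS reads $\ttr{ab^2w\cdot abab(ab^2)^hab}$, and I would route through the intermediate word $ab^2w\cdot ab(ab^2)^hab^3$. First, compare $\ttr{ab^2w\cdot abab(ab^2)^hab}$ with $\ttr{ab^2w\cdot ab(ab^2)^hab^3}$ by expanding the final $ab$ and the final $ab^3$ via $\ttr{uv}=\ttr{u}\ttr{v}-\ttr{uv\m}$: the leading terms share the common factor $\ttr{ab}=\ttr{b^2}$, and the error terms are controlled by the sign information $\ttr{ab\m}<0$ together with Lemma~\ref{ref13}(2). The assumption that $w$ is empty or ends in $b$ is essential here, since it guarantees that $P=ab^2w$ ends in $b$, so the truncations $Pb\m$ and $Pb\m a\m$ that arise in the expansion are genuine words whose traces can be bounded by the subword trace inequality of Lemma~\ref{ref13}(2). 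Lemma~\ref{5A}(1) with $w\mapsto ab^2w$ and $s=h$ then finishes the base case via $\ttr{ab^2w\cdot ab(ab^2)^hab^3}<\ttr{ab^2w(ab^2)^{h+2}}$.

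The main obstacle is the sign accounting in these expansions. Unlike Lemmas~\ref{5A} and~\ref{5B}, which rebalance two endpoint segments of a word of fixed $a$- and $b$-counts, Lemma~\ref{5C} trades one letter $a$ for one letter $b$ in the overall word, so the positivity of the difference between the two sides ultimately reflects the strict inequality $\ttr{b}>\ttr{a}$, extracted from the expansions via the auxiliary inequalities $\ttr{ab\m}<0$ and $\ttr{u}<\ttr{w}$ for $u$ a proper subword of $w$ (Lemma~\ref{ref13}(2)).
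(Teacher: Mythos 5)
Your base-case idea is sound: routing $\ttr{ab^2wabab(ab^2)^hab}$ through the intermediate word $ab^2wab(ab^2)^hab^3$ and then applying Lemma~\ref{5A}(1) does work, and expanding both words on their final syllable with the common factor $\ttr{ab}=\ttr{b^2}$, one can indeed reduce the first comparison to positivity of terms controlled by $\ttr{ab\m}<0$ and Lemma~\ref{ref13}(2). This is a legitimate alternative to a portion of the paper's argument. But the inductive step you propose does not close, for two independent reasons.

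First, the algebra does not produce a term of the claimed shape. Applying Lemma~\ref{ref5}(4) with the first $ab$ syllable distinguished and $k\ge1$ gives
\[
\ttr{ab^2wab(ab^2)^kab(ab^2)^hab}
= \ttr{ab^2w(ab^2)^kab(ab^2)^hab}\,\ttr{ab}
 - \ttr{ab^2wb(ab^2)^{k-1}ab(ab^2)^hab}.
\]
The second term is \emph{not} of the form $\ttr{ab^2w'ab(ab^2)^{k-1}ab(ab^2)^hab}$ for any adjusted prefix $w'$: the first $ab$ syllable has been absorbed, so this word carries one fewer $a$ than the inductive hypothesis requires, and no rewriting of the prefix (which would at best append $b$'s) can restore that $a$. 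Second, and more fundamentally, that term carries the coefficient $-1$, so even if its shape were right, the inductive hypothesis would bound it from above, hence bound $-\ttr{\cdots}$, and therefore the whole LHS, \emph{from below}. To push the induction forward you would need a lower bound on that term, which is the opposite of what the hypothesis supplies. The recursion $\ttr{P(ab^2)^{n+1}}=\ttr{ab^2}\ttr{P(ab^2)^n}-\ttr{P(ab^2)^{n-1}}$ on the RHS, with its coefficient $\ttr{ab^2}\ne\ttr{ab}$, does not repair this mismatch.

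There is also a missing ingredient. Unlike Lemmas~\ref{5A} and~\ref{5B}, which rebalance words of fixed $a$- and $b$-content, Lemma~\ref{5C} genuinely trades one $a$ for one $b$, and the paper's proof pins down the cost of that trade through the strict inequality $\ttr{ab}^3 < \ttr{(ab^2)^2}$, proved by an explicit polynomial computation exploiting $\ttr{b}\ge 3$. You correctly observe that $\ttr{ab\m}<0$ and the subword inequality carry the integrality information, and your base case does use them to effect the trade; but your inductive scheme nowhere recovers a quantitative surrogate for $\ttr{ab}^3<\ttr{(ab^2)^2}$, which is what allows the argument to run through arbitrary $k$ and $h$. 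For the record, the paper's proof is not inductive at all: it expands the LHS three times in $\ttr{ab}$, expands the RHS once by the Chebyshev recursion, cancels the leading terms, and reduces everything to $\ttr{ab}^3<\ttr{(ab^2)^2}$ plus an easy subword bound. Your base-case computation could be salvaged and folded into that direct expansion, but the inductive scaffold as written needs to be discarded.
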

\begin{proof}
On the left hand side we have
\begin{equation}\label{eq5}
\begin{split}
\ttr{ab^2w&ab(ab^2)^kab(ab^2)^hab} \\
&= \ttr{bwab(ab^2)^kab(ab^2)^hab}\ttr{ab} - \ttr{bwab(ab^2)^kab(ab^2)^h}\\
&= \ttr{bwab(ab^2)^k(ab^2)^hab}\ttr{ab}^2 - \ttr{bwab(ab^2)^kb(ab^2)^{h-1}ab}\ttr{ab} \\
& \qquad - \ttr{bwab(ab^2)^kab(ab^2)^h}\\
&= \ttr{bw(ab^2)^k(ab^2)^hab}\ttr{ab}^3 - \ttr{bwb(ab^2)^{k+h-1}ab}\ttr{ab}^2\\
& \qquad - \ttr{bwab(ab^2)^kb(ab^2)^{h-1}ab}\ttr{ab} - \ttr{bwab(ab^2)^kab(ab^2)^h}\\
&= \ttr{w(ab^2)^{k+h+1}}\ttr{ab}^3 - \ttr{wb(ab^2)^{k+h}}\ttr{ab}^2\\
& \qquad - \ttr{wab(ab^2)^kb(ab^2)^{h}}\ttr{ab} - \ttr{bwab(ab^2)^kab(ab^2)^h}.
\end{split}
\end{equation}
On the other side we have
\begin{align*}
\ttr{ab^2w(ab^2)^{k+h+2}}
&= \ttr{ab^2w(ab^2)^{k+h}}\ttr{(ab^2)^2}- \ttr{ab^2w(ab^2)^{k+h-2}}\\
&= \ttr{w(ab^2)^{k+h+1}}\ttr{(ab^2)^2}- \ttr{w(ab^2)^{k+h-1}}.
\end{align*}
It is enough to show that $\ttr{w(ab^2)^{k+h-1}}< \ttr{wb(ab^2)^{k+h}}\ttr{ab}^2$ and that $\ttr{ab}^3<\ttr{(ab^2)^2}$.
If $k+h\geq 1$ the first inequality is clear. If $k=h=0$, it amounts to
\[
\ttr{wb^{-1}(ab)^{-1}}<\ttr{wb}\ttr{ab}^2,
\]
or equivalently
\[
\ttr{wb^{-1}}\ttr{ab}<\ttr{wb}\ttr{ab}^2+\ttr{wb^{-1}ab},
\]
which holds, since $w$ is empty or ends with $b$.
	
We now show $\ttr{ab}^3<\ttr{(ab^2)^2}$. We have
\begin{equation*}
\ttr{ab}^3 = \ttr{b^2}^3 
 = \ttr{b^2}\bigl(\ttr{b^4}+2\bigr)
 = \ttr{b^6}+3\ttr{b^2}.
\end{equation*}
Let $\ttr{b} - \ttr{a}=\Delta \ge1$; then
\begin{multline*}
\ttr{(ab^2)^2} = \ttr{ab^2}^2-2
 = \bigl(\ttr{ab}\ttr{b}-\ttr{a}\bigr)^2-2
 = \bigl(\ttr{b^2}\ttr{b}-\ttr{a}\bigr)^2-2\\
 = \bigl(\ttr{b^3}+\Delta\bigr)^2-2
 = \ttr{b^3}^2-2 +2\ttr{b^3}\Delta + \Delta^2
 = \ttr{b^6}+2\ttr{b^3}\Delta + \Delta^2.
\end{multline*}
We thus have to show $2\ttr{b^3}\Delta + \Delta^2> 3\ttr{b^2}$,
and it is enough
to prove the case $\Delta =1$,
namely $2\ttr{b^3}-3\ttr{b^2}+1>0$.
We compute
\begin{align*}
2\ttr{b^3}-3\ttr{b^2}+1 &= 2T_3\bigl(\ttr{b}\bigr)-
    3T_2\bigl(\ttr{b}\bigr)+1 \\
& = 2\ttr{b}^3 -3\ttr{b}^2 -6\ttr{b} + 7.
\end{align*}
Since the polynomial $2x^3-3x^2-6x+7$ has three real roots, all of them strictly less than $3$, and the trace of $B$ is at least $3$, the desired inequality follows.
\end{proof}

\begin{lemma}
Let\label{5D} $w$ be a word that is empty or begins with $a$, and let $s\geq 0$.
Then we have:
\begin{enumerate}
\item $\ttr{wab^4(ab^2)^sab^3}<\ttr{w(ab^2)^{s+3}}$;
\item $\ttr{wab^3(ab^2)^sab^4}<\ttr{w(ab^2)^{s+3}}$.
\end{enumerate}
\end{lemma}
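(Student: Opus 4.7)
The plan is to deduce~(2) from~(1), then prove~(1) by induction on $s$. For the reduction, apply Lemma~\ref{ref5}(3) to $wab^3(ab^2)^sab^4$: its reversal is $b^4a(b^2a)^sb^3a\widetilde{w}$, and a cyclic rotation places $a\widetilde{w}$ at the start. The identity $a(b^2a)^s=(ab^2)^sa$ then rewrites the remainder as $b^4(ab^2)^sab^3$. Writing $w=au$ when $w$ is nonempty and pairing the last letter of $a\widetilde{w}$ with the leading~$b$, the whole expression becomes $w^*\,ab^4(ab^2)^sab^3$ with $w^*:=a\widetilde{u}$ (or $w^*$ empty when $w$ is empty); in both cases $w^*$ again satisfies the hypothesis of the lemma. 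A parallel computation yields $\ttr{w(ab^2)^{s+3}}=\ttr{w^*(ab^2)^{s+3}}$, so (2) for~$w$ follows from (1) for~$w^*$.

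For~(1), set $L_s:=\ttr{wab^4(ab^2)^sab^3}$ and $R_s:=\ttr{w(ab^2)^{s+3}}$. Applying Lemma~\ref{ref5}(4) with $u=ab^2$ to split off a central $(ab^2)^2$ shows that both $L_s$ and $R_s$ satisfy the Chebyshev-type recursion $X_{s+1}=X_s\ttr{ab^2}-X_{s-1}$; by linearity, so does their difference $D_s:=R_s-L_s$. Since $\ttr{ab^2}\ge 3$, a routine induction on $s$ shows that if $D_0>0$, $D_1>0$ and $D_1\ge D_0$, then $D_{s+1}>D_s>0$ for every $s\ge 1$. The proof of~(1) thus reduces to verifying these three inequalities at $s=0,1$.

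For the base cases, two applications of Lemma~\ref{ref5}(4) with $u=b$, using the decomposition $ab^4=ab^2\cdot b^2$, produce the exact identity
\[
L_s = \ttr{w(ab^2)^{s+1}ab^3}\bigl(\ttr{ab}+1\bigr) - \ttr{wab(ab^2)^sab^3}\,\ttr{b},
\]
in which the standing relation $\ttr{b}^2-1=\ttr{b^2}+1=\ttr{ab}+1$ has been used to simplify. Together with the upper bound $\ttr{wab(ab^2)^sab^3}<\ttr{w(ab^2)^{s+2}}$ from Lemma~\ref{5A}(1), this reduces each of $s=0,1$ to an explicit comparison among traces of words of bounded length. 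The main obstacle will be the signed bookkeeping such comparisons require: as in Lemmas~\ref{5A}, \ref{5B}, and~\ref{5C}, I expect the decisive sign to be supplied by the inequality $\ttr{ab^{-1}}<0$ recorded at the start of this section, and the hypothesis that $w$ is empty or begins with~$a$ to be exactly what is needed to cancel the otherwise problematic cross terms of the form $\ttr{a^{-1}w\cdots}$ that arise when an outer~$a$ is peeled off via Lemma~\ref{ref5}(2).
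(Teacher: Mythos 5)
Your reduction of~(2) to~(1) via reversal is correct and is exactly the paper's route: conjugating by the reversal and rotating produces $w^* ab^4(ab^2)^s ab^3$ with $w^*=a\widetilde{u}$ (or empty), and the hypothesis on $w$ is what guarantees $w^*$ has the right shape. For~(1), however, your Chebyshev-recursion framework does not constitute a proof. The recursion $D_{s+1}=D_s\ttr{ab^2}-D_{s-1}$ and the induction step are fine, but the entire content of the lemma has now been pushed into the unverified base data $D_0>0$, $D_1>0$, $D_1\ge D_0$; the last of these is an extra \emph{comparison}, not a positivity, and you give no argument for it. More concretely, the tool you point to for the base cases does not help: in your identity
\[
L_s = \ttr{w(ab^2)^{s+1}ab^3}\bigl(\ttr{ab}+1\bigr) - \ttr{wab(ab^2)^sab^3}\,\ttr{b},
\]
the term $\ttr{wab(ab^2)^sab^3}$ carries a minus sign, so the \emph{upper} bound $\ttr{wab(ab^2)^sab^3}<\ttr{w(ab^2)^{s+2}}$ from Lemma~\ref{5A}(1) produces a \emph{lower} bound on $L_s$, which is the wrong direction for $L_s<R_s$; you would need a lower bound on that term (e.g.\ the subword estimate of Lemma~\ref{ref13}(2)), not the 5A bound. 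You acknowledge the remaining ``signed bookkeeping'' is left open, and that bookkeeping is precisely where the lemma lives.

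The paper takes a different and, in the end, shorter path: it never splits into base cases. It expands $L_s$ differently --- peeling $b^2$ off $b^4$ via Lemma~\ref{ref5}(2), then using $\ttr{b}\ttr{ab}=\ttr{ab^2}+\ttr{a}$ --- to arrive directly, uniformly in $s$, at
\[
L_s=\ttr{w(ab^2)^{s+2}}\ttr{ab^2}+\ttr{w(ab^2)^{s+2}}\ttr{a}-\ttr{w(ab^2)^{s+1}ab}\ttr{b^2}-\ttr{wa(ab^2)^{s+1}b},
\]
compares this with $R_s=\ttr{w(ab^2)^{s+2}}\ttr{ab^2}-\ttr{w(ab^2)^{s+1}}$, handles $\ttr{w(ab^2)^{s+1}}<\ttr{wa(ab^2)^{s+1}b}$ by the subword lemma, and disposes of the remaining difference using $\ttr{a^{-1}b}<0$. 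No induction on $s$ is needed, and the sign that does the work is the one you anticipated, but it shows up in a decomposition that keeps the problematic term on the favorable side of the inequality from the start.
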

\begin{proof}
	We prove (1). On the left side we have
	\begin{align*}
		\ttr{wab^4&(ab^2)^sab^3}\\
		& = \ttr{wab^2(ab^2)^sab^3}\ttr{b^2} - \ttr{wa(ab^2)^sab^3}\\
		& = \ttr{wab^2(ab^2)^sab^2}\ttr{b}\ttr{b^2} - \ttr{wab^2(ab^2)^sab}\ttr{b^2} - \ttr{wa(ab^2)^sab^3}\\
		& = \ttr{w(ab^2)^{s+2}}\ttr{b}\ttr{ab} - \ttr{w(ab^2)^{s+1}ab}\ttr{b^2}- \ttr{wa(ab^2)^{s+1}b}\\
		& = \ttr{w(ab^2)^{s+2}}\ttr{ab^2} + \ttr{w(ab^2)^{s+2}}\ttr{a} \\
		&\qquad - \ttr{w(ab^2)^{s+1}ab}\ttr{b^2}- \ttr{wa(ab^2)^{s+1}b},
	\end{align*}	
while on the other side we have
\[
\ttr{w(ab^2)^{s+3}} = \ttr{w(ab^2)^{s+2}}\ttr{ab^2} - \ttr{w(ab^2)^{s+1}}.
\]
We obtain $\ttr{w(ab^2)^{s+1}} < \ttr{wa(ab^2)^{s+1}b}$
from Lemma~\ref{ref13}(2). We complete the proof by computing
\begin{align*}
		\ttr{w&(ab^2)^{s+1}ab}\ttr{b^2} - \ttr{w(ab^2)^{s+2}}\ttr{a} \\
		& = \ttr{w(ab^2)^{s+1}ab}\ttr{ab} - \ttr{w(ab^2)^{s+1}abab}-  \ttr{w(ab^2)^{s+1}aba^{-1}b}\\
		& = \ttr{w(ab^2)^{s+1}abab} + \ttr{w(ab^2)^{s+1}}- \ttr{w(ab^2)^{s+1}abab}-  \ttr{w(ab^2)^{s+1}aba^{-1}b}\\
		& = \ttr{w(ab^2)^{s+1}} - \ttr{w(ab^2)^{s+1}aba^{-1}b}\\
		& = \ttr{w(ab^2)^{s+1}} - \ttr{w(ab^2)^{s+1}ab}\ttr{a^{-1}b} + \ttr{w(ab^2)^{s+1}a^2},
\end{align*}
whose end result is positive since $\ttr{a^{-1}b}<0$.

(2) can be obtained from (1) as in the proof of Lemma~\ref{5B}.
\end{proof}

\begin{lemma}
Let\label{5E} $w$ be a word that is empty or ends with $b$, and let $k,h\geq 0$.
Then we have
\[
\ttr{wab^4(ab^2)^kab^4(ab^2)^hab^4} < \ttr{w(ab^2)^{k+h+5}}.
\]
\end{lemma}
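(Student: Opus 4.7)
The plan is to expand both sides using trace identities and compare in normal form. On the left, I would iteratively apply the identity $\ttr{Xb^4Y} = \ttr{Xb^2Y}\ttr{b^2} - \ttr{XY}$, which follows from Lemma~\ref{ref5}(4) with $u = b^2$, to each of the three $ab^4$ factors in turn, starting from the rightmost. After three applications, this telescopes into
\begin{multline*}
\ttr{wab^4(ab^2)^kab^4(ab^2)^hab^4} = \ttr{w(ab^2)^{k+h+3}}\ttr{b^2}^3 - \ttr{wa(ab^2)^{k+h+2}}\ttr{b^2}^2 \\
- \ttr{wab^4(ab^2)^k a(ab^2)^{h+1}}\ttr{b^2} - \ttr{wab^4(ab^2)^k ab^4(ab^2)^h a}.
\end{multline*}
Call the three subtracted error traces $E_1, E_2, E_3$; by Lemma~\ref{ref8}(1), each is at least~$2$.

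On the right, I would apply twice the three-term recurrence $\ttr{w(ab^2)^N} = \ttr{w(ab^2)^{N-1}}\ttr{ab^2} - \ttr{w(ab^2)^{N-2}}$, itself a consequence of Lemma~\ref{ref5}(4) with $u = ab^2$, obtaining
\[
\ttr{w(ab^2)^{k+h+5}} = \ttr{w(ab^2)^{k+h+3}}\ttr{ab^2}^2 - \ttr{w(ab^2)^{k+h+2}}\ttr{ab^2} - \ttr{w(ab^2)^{k+h+3}}.
\]
The key input is the integer inequality $\ttr{ab^2}^2 - \ttr{b^2}^3 \geq 3$, which is a direct consequence of the final computation in Lemma~\ref{5C}: that argument shows $\ttr{(ab^2)^2} > \ttr{ab}^3$, and our standing hypothesis $\ttr{ab} = \ttr{b^2}$ together with $\ttr{(ab^2)^2} = \ttr{ab^2}^2 - 2$ and integrality deliver the gap.

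Combining these expansions and applying the recurrence once more to rewrite $\ttr{w(ab^2)^{k+h+2}}\ttr{ab^2} = \ttr{w(ab^2)^{k+h+3}} + \ttr{w(ab^2)^{k+h+1}}$, the claim reduces to
\[
\ttr{w(ab^2)^{k+h+3}} + E_1\ttr{b^2}^2 + E_2\ttr{b^2} + E_3 > \ttr{w(ab^2)^{k+h+1}},
\]
which follows immediately from Lemma~\ref{ref13}(2): $(ab^2)^{k+h+1}$ is a subword of $w(ab^2)^{k+h+3}$ (the latter not being a power of $a$), so $\ttr{w(ab^2)^{k+h+3}} > \ttr{w(ab^2)^{k+h+1}}$, and the remaining $E_i$-terms are strictly positive. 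The main obstacle I anticipate is the careful bookkeeping in the first step, to verify that the three successive applications of the $b^2$-removal identity produce exactly the three error traces displayed above; once that is in place, the rest of the argument is algebraic routine.
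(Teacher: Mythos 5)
Your proof is correct and follows essentially the same route as the paper: identical left-hand expansion into $\ttr{w(ab^2)^{k+h+3}}\ttr{b^2}^3 - E_1\ttr{b^2}^2 - E_2\ttr{b^2} - E_3$, and the same key input $\ttr{(ab^2)^2} > \ttr{ab}^3 = \ttr{b^2}^3$ from the proof of Lemma~\ref{5C}. The only cosmetic difference is in the final comparison: the paper uses the skip recurrence $\ttr{w(ab^2)^{k+h+5}} = \ttr{w(ab^2)^{k+h+3}}\ttr{(ab^2)^2} - \ttr{w(ab^2)^{k+h+1}}$ and absorbs $\ttr{w(ab^2)^{k+h+1}}$ against $E_1\ttr{b^2}^2$, whereas you invoke integrality to get the gap $\ttr{ab^2}^2 - \ttr{b^2}^3 \ge 3$ and absorb it against $\ttr{w(ab^2)^{k+h+3}}$ — both work, and the paper's version is marginally cleaner because it avoids the appeal to integrality.
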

\begin{proof}
On the left side we have
\begin{align*}
		&\ttr{wab^4(ab^2)^kab^4(ab^2)^hab^4}\\
		& = \ttr{wab^4(ab^2)^kab^4(ab^2)^hab^2}\ttr{b^2} - \ttr{wab^4(ab^2)^kab^4(ab^2)^ha}\\
		& = \ttr{wab^4(ab^2)^kab^2(ab^2)^hab^2}\ttr{b^2}^2
         - \ttr{wab^4(ab^2)^ka(ab^2)^hab^2}\ttr{b^2}\\
		& \qquad -\ttr{wab^4(ab^2)^kab^4(ab^2)^ha}\\
		& = \ttr{wab^2(ab^2)^kab^2(ab^2)^hab^2}\ttr{b^2}^3
         - \ttr{wa(ab^2)^kab^2(ab^2)^hab^2}\ttr{b^2}^2\\
		& \qquad - \ttr{wab^4(ab^2)^ka(ab^2)^hab^2}\ttr{b^2} -\ttr{wab^4(ab^2)^kab^4(ab^2)^ha}\\
		& = \ttr{w(ab^2)^{k+h+3}}\ttr{b^2}^3
         - \ttr{wa(ab^2)^{k+h+2}}\ttr{b^2}^2 \\
		& \qquad - \ttr{wab^4(ab^2)^ka(ab^2)^{h+1}}\ttr{b^2} -\ttr{wab^4(ab^2)^kab^4(ab^2)^ha}.
	\end{align*}
On the other side we have
\[
\ttr{w(ab^2)^{k+h+5}} = \ttr{w(ab^2)^{k+h+3}}\ttr{(ab^2)^2}
 - \ttr{w(ab^2)^{k+h+1}}.
\]
The second end result is strictly greater than the first, because
$\ttr{w(ab^2)^{k+h+1}}<\ttr{wa(ab^2)^{k+h+2}}\ttr{b^2}^2$ by Lemma~\ref{ref13}(2), and $\ttr{b^2}^3=\ttr{ab}^3<\ttr{(ab^2)^2}$ by the proof of Lemma~\ref{5C}.
\end{proof}

\begin{lemma}
Let\label{5F} $w$ be a word that begins with $ab$ or is empty, and let $k,h\geq 0$. Then
\[
\ttr{wab^3(ab^2)^kab^3(ab^2)^hab^3}<\ttr{w(ab^2)^{k+h+4}}.
\]
\end{lemma}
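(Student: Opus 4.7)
The plan is to mirror the structure of Lemma~\ref{5E}, using trace identities from Lemma~\ref{ref5} to expand both sides and then estimate correction terms. First, I would apply the identity $\ttr{Pb^3Q}=\ttr{Pb^2Q}\ttr{b}-\ttr{PbQ}$ (the special case of Lemma~\ref{ref5}(4) with inner letter $u=b$) to each of the three $b^3$ factors on the left hand side in turn, starting with the rightmost and working to the leftmost. Telescoping these three reductions yields
\begin{align*}
\ttr{wab^3(ab^2)^kab^3(ab^2)^hab^3}
&= \ttr{w(ab^2)^{k+h+3}}\ttr{b}^3 - \ttr{wab(ab^2)^{k+h+2}}\ttr{b}^2\\
&\qquad - \ttr{wab^3(ab^2)^kab(ab^2)^{h+1}}\ttr{b} - \ttr{wab^3(ab^2)^kab^3(ab^2)^hab},
\end{align*}
where the three subtracted correction terms are all positive by Lemma~\ref{ref19}.

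For the right hand side, a single application of Lemma~\ref{ref5}(2) gives $\ttr{w(ab^2)^{k+h+4}} = \ttr{w(ab^2)^{k+h+3}}\ttr{ab^2}-\ttr{w(ab^2)^{k+h+2}}$. Unlike the situation in Lemma~\ref{5E}, here the main-term comparison goes the \emph{wrong} way: using $\ttr{ab^2}=\ttr{b^2}\ttr{b}-\ttr{a}=\ttr{b}^3-2\ttr{b}-\ttr{a}$, the difference $\ttr{b}^3-\ttr{ab^2}=2\ttr{b}+\ttr{a}$ is strictly positive. Thus, after subtracting, the desired inequality reduces to
\[
\ttr{wab(ab^2)^{k+h+2}}\ttr{b}^2 + \ttr{wab^3(ab^2)^kab(ab^2)^{h+1}}\ttr{b} + \ttr{wab^3(ab^2)^kab^3(ab^2)^hab}
> \ttr{w(ab^2)^{k+h+3}}(2\ttr{b}+\ttr{a})+\ttr{w(ab^2)^{k+h+2}}.
\]

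To close this estimate, I would first rewrite the dominant correction via Lemma~\ref{ref5}(4) as $\ttr{wab(ab^2)^{k+h+2}}\ttr{b}=\ttr{wa(ab^2)^{k+h+2}}\ttr{b}^2-\ttr{wab^{-1}(ab^2)^{k+h+2}}\ttr{b}+\cdots$, and then expand the middle correction using the relation $\ttr{ab}=\ttr{b^2}$ that is specific to this section, producing telescoping traces of the form $\ttr{wab^3(ab^2)^{k+h+1}}\ttr{b^2}-\ttr{wab^3(ab^2)^kb(ab^2)^h}$. Combining these expansions with Lemma~\ref{ref13}(2) applied to natural subwords of $w(ab^2)^{k+h+3}$, and with the observation $\ttr{ab^{-1}}<0$ stated at the start of this section, should produce all correction terms on the left with the right signs to overcome the positive contribution $\ttr{w(ab^2)^{k+h+3}}(2\ttr{b}+\ttr{a})$ on the right. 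The hypothesis that $w$ begins with $ab$ (rather than merely with $a$, as in Lemmas~\ref{5A}(2), \ref{5B}(2), \ref{5D}) is used precisely to guarantee the sign of the relevant intermediate terms, exactly as in the auxiliary case analyses of Lemma~\ref{5A}. The main obstacle I anticipate is that the gap is very tight here: naive subword bounds alone do not suffice (numerically, $\ttr{wab^3(ab^2)^kab(ab^2)^{h+1}}$ can be slightly smaller than $\ttr{w(ab^2)^{k+h+3}}$), so the refinement via the trace-identity route above, combined with the polynomial inequality $2\ttr{b^3}-3\ttr{b^2}+1>0$ from the proof of Lemma~\ref{5C}, appears essential.
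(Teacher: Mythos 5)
Your approach is genuinely different from the paper's: you expand the \emph{left} side by peeling $b^3 = b\cdot b^2$ off each occurrence, while the paper expands the \emph{right} side $\ttr{w(ab^2)^{k+h+4}}=\ttr{wab(ba)bb(ab^2)^{k+h+2}}$, repeatedly shifting the $(ba)$ factor via Lemma~\ref{ref5}(4) and finally invoking the identity $\ttr{ab}^2=\ttr{ab}\ttr{b^2}=\ttr{ab^3}+\ttr{ab^{-1}}$ (specific to the $\ttr{ab}=\ttr{b^2}$ regime) to make the word $wab^3(ab^2)^kab^3(ab^2)^hab^3$ appear directly with a controlled collection of residual terms. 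The paper's route is structurally cleaner because, once the left side surfaces on the right, one is only ever comparing correction terms among themselves, not chasing a main-term deficit.

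Your telescoping expansion of the left side is correct: with $u=b$ in Lemma~\ref{ref5}(4) one indeed gets
\begin{equation*}
\ttr{wab^3(ab^2)^kab^3(ab^2)^hab^3}
= \ttr{w(ab^2)^{k+h+3}}\ttr{b}^3 - \ttr{wab(ab^2)^{k+h+2}}\ttr{b}^2
- \ttr{wab^3(ab^2)^kab(ab^2)^{h+1}}\ttr{b} - \ttr{wab^3(ab^2)^kab^3(ab^2)^hab},
\end{equation*}
and you correctly note that the main-term comparison goes the wrong way by $\ttr{b}^3-\ttr{ab^2}=2\ttr{b}+\ttr{a}$. But the proof is not finished at that point, and the sketch you give of the closing step does not clearly close. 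After substituting $\ttr{wab(ab^2)^{k+h+2}}\ttr{b}=\ttr{w(ab^2)^{k+h+3}}+\ttr{wa(ab^2)^{k+h+2}}$, the inequality reduces to
\begin{equation*}
\ttr{wa(ab^2)^{k+h+2}}\ttr{b} + \ttr{wab^3(ab^2)^{k+h+2}} + \ttr{wab^3(ab^2)^ka(ab^2)^{h+1}} + \ttr{wab^3(ab^2)^kab^3(ab^2)^hab}
> \ttr{w(ab^2)^{k+h+3}}(\ttr{b}+\ttr{a}) + \ttr{w(ab^2)^{k+h+2}},
\end{equation*}
and the subword bounds of Lemma~\ref{ref13}(2) alone are demonstrably insufficient here: a direct numerical check (take $A,B$ with $\ttr{a}=4$, $\ttr{b}=6$, $\ttr{ab}=34$, $k=h=0$, $w$ empty) shows that replacing $\ttr{wa(ab^2)^{k+h+2}}$ by the subword lower bound $\ttr{w(ab^2)^{k+h+2}}$ and $\ttr{wab^3(ab^2)^{k+h+2}}$ by $\ttr{w(ab^2)^{k+h+3}}$ makes the resulting inequality false, so one must retain the $\approx\ttr{a}$ factor of slack coming from the near-equality $\ttr{wa(ab^2)^{k+h+2}}\approx \ttr{a}\ttr{w(ab^2)^{k+h+2}}$. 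Your closing paragraph gestures at this (via $\ttr{ab^{-1}}<0$ and the identity $\ttr{ab}=\ttr{b^2}$), but it remains a heuristic: you do not exhibit the cancellation, and the invocation of the polynomial inequality $2\ttr{b^3}-3\ttr{b^2}+1>0$ (used in Lemmas~\ref{5C} and~\ref{5E} to prove $\ttr{ab}^3<\ttr{(ab^2)^2}$) is not obviously the right estimate here --- the paper's own proof of this lemma does not use it. In short: a plausible alternative decomposition, correctly begun, but with a genuine gap at the point where you yourself flag that the margin is tight.
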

\begin{proof}
This time we work on the term on the right hand side; the step in the middle of the following identity chain results from $\ttr{(ab)b^2}+\ttr{(ab)b^{-2}}=\ttr{ab}\ttr{b^2}=\ttr{ab}^2$.
\begin{align*}
		\ttr{w(a&b^2)^{k+h+4}} \\
		& = \ttr{wab(ba)bb(ab^2)^{k+h+2}} \\
		& = \ttr{wab^3(ab^2)^{k+h+2}}\ttr{ba} - \ttr{waba^{-1}b(ab^2)^{k+h+2}}\\
		& = \ttr{wab^3(ab^2)^k ab^2 (ab)b (ab^2)^h}\ttr{ab}- \ttr{waba^{-1}b(ab^2)^{k+h+2}}\\
		& = \ttr{wab^3(ab^2)^k ab^3 (ab^2)^h}\ttr{ab}^2 - \ttr{wab^3(ab^2)^k aba^{-1}b (ab^2)^h}\ttr{ab}\\
		& \qquad - \ttr{waba^{-1}b(ab^2)^{k+h+2}}\\
		& = \ttr{wab^3(ab^2)^k ab^3 (ab^2)^h}\bigl(\ttr{ab^3}+\ttr{ab^{-1}}\bigr) \\
		& \qquad - \ttr{wab^3(ab^2)^k aba^{-1}b (ab^2)^h}\ttr{ab} - \ttr{waba^{-1}b(ab^2)^{k+h+2}}\\
		& = \ttr{wab^3(ab^2)^k ab^3 (ab^2)^hab^3} + \ttr{wab^3(ab^2)^k ab^3 (ab^2)^{h-1}ab^{-1}a^{-1}}\\
		& \qquad +\ttr{wab^3(ab^2)^k ab^3 (ab^2)^h}\ttr{ab^{-1}} \\
		& \qquad - \ttr{wab^3(ab^2)^k aba^{-1}b (ab^2)^h}\ttr{ab} - \ttr{waba^{-1}b(ab^2)^{k+h+2}}.
	\end{align*}
Now, the first summand of the end result is the left side of the desired inequality, and the second is positive due to our hypotheses on $w$. We develop the fourth summand, in order to make the third appear:
\begin{align*}
\ttr{wab^3&(ab^2)^k aba^{-1}b (ab^2)^h}\ttr{ab}\\
& = \ttr{wab^3(ab^2)^k ab (ab^2)^h}\ttr{ab}\ttr{a^{-1}b}
-\ttr{wab^3(ab^2)^k a^2 (ab^2)^h}\ttr{ab}\\
& = \ttr{wab^3(ab^2)^k ab (ab^2)^h}\ttr{b^2}\ttr{a^{-1}b}
-\ttr{wab^3(ab^2)^k a^2 (ab^2)^h}\ttr{ab}\\
& = \ttr{wab^3(ab^2)^k ab^3 (ab^2)^h}\ttr{a^{-1}b}
+\ttr{wab^3(ab^2)^k ab^{-1} (ab^2)^h}\ttr{a^{-1}b}\\
& \qquad -\ttr{wab^3(ab^2)^k a^2 (ab^2)^h}\ttr{ab} \\
& = \ttr{wab^3(ab^2)^k ab^3 (ab^2)^h}\ttr{ab^{-1}} \\
& \qquad +\ttr{wab^3(ab^2)^k a (ab^2)^h}\ttr{a^{-1}b}\ttr{b} - \ttr{wab^3(ab^2)^k ab (ab^2)^h}\ttr{a^{-1}b}\\
& \qquad - \ttr{wab^3(ab^2)^k a^2 (ab^2)^h}\ttr{ab}.
\end{align*}

We are then left with proving that the sum
\begin{multline*}
-\ttr{wab^3(ab^2)^k a (ab^2)^h}\ttr{a^{-1}b}\ttr{b} + \ttr{wab^3(ab^2)^k ab (ab^2)^h}\ttr{a^{-1}b}\\
+\ttr{wab^3(ab^2)^k a^2 (ab^2)^h}\ttr{ab}
-\ttr{wab(ab^2)^{k+h+2}}\ttr{a^{-1}b}+ \ttr{wa^2(ab^2)^{k+h+2}}
\end{multline*}
is positive. The first, third, and last summand surely are, and so is the sum or the second with the fourth, because
\[
\ttr{wab^3(ab^2)^k ab (ab^2)^h} < \ttr{w(ab^2)^{k+h+2}}
<\ttr{wab(ab^2)^{k+h+2}},
\]
by Lemma~\ref{5A}(2) and Lemma~\ref{ref13}(2).
\end{proof}

We finally arrive at~\S\ref{ref1}(IV.2).

\begin{theorem}
Let\label{ref16} $A,B\in\SL_2\Zbb\p$, and assume $\tr(A)<\tr(B)$ and $\tr(AB)=\tr(B^2)$. The $ab^2$ is the only optimal word.
\end{theorem}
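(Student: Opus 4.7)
The plan is to reduce the claim to the following: for every $n \geq 1$, the unique (up to cyclic rotation) trace-maximizer among words of length $3n$ is $(ab^2)^n$. Once established, this immediately yields $\ttr{w^3} < \ttr{(ab^2)^{\abs{w}}}$ for every $w$ not of this form, which by Lemma~\ref{ref6} means $w \prec ab^2$; since $ab^2$ is itself Lyndon, this gives that $\set{ab^2}$ is the complete set of optimal words.

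Fix a trace-maximizing word $u$ of length $3n$. By Lemma~\ref{4-5-6}, $u$ has no factor $a^2$. If $u$ is a power of $b$, then the inequality $\ttr{ab^2} = \ttr{b^3} + (\ttr{b} - \ttr{a}) > \ttr{b^3}$ (which rests on the standing hypothesis $\ttr{ab} = \ttr{b^2}$ together with $\ttr{a} < \ttr{b}$) and the monotonicity of $\ttr{v^k}$ in $\rho(V)$ give $\ttr{b^{3n}} < \ttr{(ab^2)^n}$, contradicting maximality. Hence, after a rotation, $u = ab^{k_1}\cdots ab^{k_s}$ with $k_i \geq 1$; if all $k_i = 2$ then $u = (ab^2)^n$ and we are done. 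Otherwise I plan to produce a length-preserving, strictly trace-increasing modification of $u$ by applying one of Lemmas~\ref{5A}--\ref{5F} after a suitable cyclic rotation, contradicting maximality: Lemma~\ref{5C} disposes of any $u$ containing two or more $ab$-blocks, while Lemmas~\ref{5F} and \ref{5E} dispose of those containing at least three $ab^3$ or three $ab^4$ blocks; Lemmas~\ref{5A}, \ref{5B}, \ref{5D} cover the mixed patterns $(ab,ab^3)$, $(ab,ab^4)$ and $(ab^3,ab^4)$. A divisibility check on the length equation $s + \sum k_i = 3n$, which reads $\sum(k_i - 2) \equiv 0 \pmod 3$, guarantees that the triple-copy hypothesis of Lemmas~\ref{5E} and \ref{5F} is automatically met whenever only exponents in $\set{2,3}$ (respectively $\set{2,4}$) occur.

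The main obstacle will be the subcase in which some $k_i \geq 5$, not directly covered by Lemmas~\ref{5A}--\ref{5F}. For this I plan to establish an auxiliary length-preserving inequality of the form $\ttr{w\,ab^k} < \ttr{w\,ab^{k-3}(ab^2)}$ for $k \geq 5$, under boundary conditions on $w$ analogous to those of Lemmas~\ref{5A}--\ref{5B}, by iterating the identity $\ttr{wu^2v} = \ttr{wuv}\ttr{u} - \ttr{wv}$ of Lemma~\ref{ref5}(4) together with $\ttr{ab} = \ttr{b^2}$ and $\ttr{b} > \ttr{a}$, in the same style as the proofs of Lemmas~\ref{5A}--\ref{5B}. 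Each such reduction strictly decreases $\max_i k_i$ by three while preserving length, so after finitely many iterations we return to the range $k_i \leq 4$ already handled. Checking the boundary conditions on the prefix/suffix $w$ at each rotation, and ensuring that the replacement word can be further rewritten without reintroducing an $a^2$ factor, is the technical heart of the argument.
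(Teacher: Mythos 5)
Your plan is, in structure, the paper's own proof: exclude $a^2$ by Lemma~\ref{4-5-6}, exclude pure $b$-powers, decompose $u$ cyclically into syllables $ab^{k_i}$, and use Lemmas~\ref{5A}--\ref{5F} together with the length count modulo $3$ to eliminate every syllable except $ab^2$. You also correctly flag the remaining obstruction, namely syllables with $k_i\ge 5$, and propose the right kind of repair.

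The inequality you defer is, however, the genuine missing step, and the paper handles it more cleanly than your iterated $k\mapsto k-3$ scheme suggests is necessary. One proves, for \emph{arbitrary} $w$ and with no boundary condition at all, that $\ttr{wab^5}<\ttr{w(ab^2)^2}$; the computation uses exactly the ingredients you list (Lemma~\ref{ref5}, the relation $\ttr{ab}=\ttr{b^2}$, the integrality gap $\ttr{b}\ge\ttr{a}+1$, and Lemma~\ref{ref13}(2)). Because there is no hypothesis on $w$, one can rotate $u$ so that its last six letters are $ab^5$, absorbing any surplus $b$'s of a longer block into $w$; a single application of the inequality then excludes every $b$-block of length five or more at once, showing outright that $u$ has no factor $b^5$. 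That is why the syllabic alphabet can be restricted to $\set{ab,ab^2,ab^3,ab^4}$ before the case analysis even begins, with no need to track fresh prefix/suffix conditions under iteration. One further slip: Lemma~\ref{5C} requires \emph{three} $ab$-syllables separated by $ab^2$-blocks, not two; your modular count already supplies the third whenever $ab$ is the only non-$ab^2$ syllable present, so the argument is unaffected, but the hypothesis should be quoted correctly.
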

\begin{proof}
Let $u$ be a word which is trace-maximizing among words of the same length; by the remarks following Equation~\eqref{eq1} we may assume, possibly replacing $u$ with its cube, that this length is a multiple of $3$.
We have to prove that $u$ is a power of a rotation of $ab^2$.	
By Lemma \ref{4-5-6}, $u$ does not contain $a^2$ as a factor, up to rotations. We have
	\begin{align*}
		\ttr{wab^5}= & 	\ttr{wab^2}\ttr{b^3}-\ttr{wab^{-1}}\\
		&= \ttr{wab^2}\bigl(\ttr{bb}\ttr{b}-\ttr{b}\bigr)-\ttr{wab^{-1}}\\
		&= \ttr{wab^2}\bigl(\ttr{ab}\ttr{b}-\ttr{b}\bigr)-\ttr{wab^{-1}}\\
		&\le \ttr{wab^2}\bigl(\ttr{ab}\ttr{b}-\ttr{a}-1\bigr)-\ttr{wab^{-1}}\\
		&= \ttr{wab^2}\bigl(\ttr{ab^2}-1\bigr)-\ttr{wab^{-1}}\\
		&= \ttr{w(ab^2)^2}+\ttr{w} -\ttr{wab^2}-\ttr{wab^{-1}}\\
		&= \ttr{w(ab^2)^2}+\ttr{w}-\ttr{wab^2} -\ttr{wa}\ttr{b}+\ttr{wab}\\
		&< \ttr{w(ab^2)^2},
	\end{align*}
and therefore $b^5$ is also excluded. Moreover
\[
\ttr{b^3} = \ttr{bb}\ttr{b}-\ttr{b}
 = \ttr{ab}\ttr{b}-\ttr{b}
 = \ttr{ab^2} +\ttr{a}-\ttr{b}
 < \ttr{ab^2}
\]
shows that $u$ is not a power of $b$. Summing up, $u$ uniquely factorizes as a product of $ab$, $ab^2$, $ab^3$, and $ab^4$;
we refer to this as the \newword{syllabic decomposition} of $u$.

Suppose that $ab$ occurs as a syllable (occurrences are always intended up to rotations). Since the length is a multiple of $3$, at least one of the following cases must hold:
\begin{itemize}
\item one of $ab^3$ and $ab^4$ occurs as well,
\item $ab$ occurs at least thrice.
\end{itemize}
These occurrences will be separated by zero or more occurrences of $ab^2$. In any case, Lemmas~\ref{5A}, \ref{5B}, and~\ref{5C} apply, and $u$ is not trace-maximizing. Thus the syllable $ab$ does not occur in $u$.

Suppose $ab^4$ occurs. Then
\begin{itemize}
\item either $ab^3$ occurs as well,
\item or $ab^4$ occurs at least thrice.
\end{itemize}
Lemmas \ref{5D} and \ref{5E} treat these cases, and thus exclude $ab^4$.

We have established that the only syllables occurring in $u$ are $ab^2$ and $ab^3$. If the latter occurs,  
it must do so at least thrice, and Lemma \ref{5F} applies.
We are then left with occurrences of $ab^2$ only, and the proof is complete.
\end{proof}

\section{Case (IV.3)}\label{ref17}

The standing assumptions in this final section are that $A,B\in\SL_2\Zbb$ satisfy $2\le\tr(A)<\tr(B)$ and $\tr(AB)>\tr(B^2)$.
First, a dichotomy.

\begin{lemma}\label{dichotomy}
Under the above assumptions the numbers $\ttr{(ab)^3}$ and $\ttr{(ab^2)^2}$ differ by at least $2$.
\end{lemma}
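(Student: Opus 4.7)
The plan is to express $D := \ttr{(ab)^3} - \ttr{(ab^2)^2}$ as a polynomial in the three integer traces $p = \ttr{a}$, $q = \ttr{b}$, $X = \ttr{ab}$, and then to read off a modular obstruction that rules out $D \in \{-1,0,1\}$. The Cayley--Hamilton identity applied to $AB$ gives $\ttr{(ab)^3} = X^3 - 3X$, while Lemma~\ref{ref5}(2) (in the form $\ttr{ab}\ttr{b} = \ttr{ab^2} + \ttr{a}$) gives $\ttr{ab^2} = Xq - p$, so $\ttr{(ab^2)^2} = (Xq - p)^2 - 2$. Substituting,
\[
D = X^3 - 3X - (Xq - p)^2 + 2 = X^3 - q^2 X^2 + (2pq - 3)X - (p^2 - 2).
\]

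The decisive feature of this formula is that every summand except the last is divisible by $X$, so that $D \equiv -(p^2 - 2) \pmod{X}$. Hence if $D = d$, then $X$ must divide the integer $d + p^2 - 2$.

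To finish I would compare sizes. The standing hypothesis $\tr(A) < \tr(B)$, with both traces integers $\ge 2$, forces $p \ge 2$ and $q \ge p+1 \ge 3$, while the section hypothesis $\tr(AB) > \tr(B^2)$ gives $X \ge q^2 - 1$. Combining these, $X \ge q^2 - 1 \ge (p+1)^2 - 1 = p^2 + 2p$, so in particular $X$ strictly exceeds each of $p^2 - 3$, $p^2 - 2$, $p^2 - 1$; since $p \ge 2$, these three quantities are moreover positive integers. Thus $X$ cannot divide any of them, contradicting the congruence above for every $d \in \{-1, 0, 1\}$. This forces $|D| \ge 2$, as required.

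I do not anticipate any serious obstacle: the argument is a one-line polynomial rearrangement followed by elementary size bookkeeping, essentially leveraging integrality to upgrade a strict inequality into a gap of size at least two. This is in pleasant contrast with the heavier syllabic manipulations that will be needed to exploit the conclusion in the subsequent Theorem~\ref{ref18}.
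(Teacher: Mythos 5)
Your proposal is correct and takes essentially the same route as the paper: both reduce $D = \ttr{(ab)^3} - \ttr{(ab^2)^2}$ modulo $\ttr{ab}$, observe that the residue is $-\ttr{a^2} = -(p^2-2)$, and then use the size bound coming from $\ttr{ab} > \ttr{b^2}$ together with integrality to show that the coset avoids $\{-1,0,1\}$. The paper does the reduction via trace identities in $F_2$ while you expand $D$ as an explicit polynomial in $(p,q,X)$, but this is a presentational difference only; the key congruence and the concluding divisibility argument are identical.
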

\begin{proof}
We work modulo $\ttr{ab}$. We have
\[
\ttr{(ab)^3} \equiv  \ttr{(ab)^2}\ttr{ab}-\ttr{ab}
\equiv  0,
\]
and also
\[
\ttr{(ab(ba)b^2} \equiv  \ttr{ab^3}\ttr{ba} - \ttr{aba^{-1}b}
\equiv -\ttr{ab}\ttr{a^{-1}b} +\ttr{a^2}
\equiv \ttr{a^2}.
\]
Taking the difference we get
\[
\ttr{(ab^2)^2}-\ttr{(ab)^3} \equiv \ttr{a^2}.
\]
Now, the coset $\ttr{a^2}+\mathbb Z\ttr{ab}$ does not intersect the interval $\set{-1,0,1}$. Indeed, its points closest to zero are $\ttr{a^2}\ge2$ and $\ttr{a^2}-\ttr{ab}\le \ttr{a^2}-\ttr{b^2}-1\le -2$.
\end{proof}

The case $\ttr{(ab)^3}>\ttr{(ab^2)^2}$ of the dichotomy follows familiar patterns.

\begin{lemma}
Assume\label{61} $\ttr{(ab^2)^2}\le \ttr{(ab)^3}-2$.
Fix a word $w$, and let $k\geq 1$ be such that the length of
$wab^2(ab)^kab^2$ is a  multiple of $6$. If $w$ is empty, or is a product of $ab$ and $ab^2$, then we have
\[
\ttr{wab^2(ab)^kab^2}<\ttr{w(ab)^{k+3}}.
\]
\end{lemma}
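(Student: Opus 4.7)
The plan is to convert the inequality into strict positivity of $d_k := \ttr{w(ab)^{k+3}} - \ttr{wab^2(ab)^k ab^2}$ and exploit a linear recurrence. Applying Lemma~\ref{ref5}(4) with the split $(ab)^{k+1} = (ab)^k\cdot ab$, together with the group-theoretic cancellation $ab^2\cdot(ab)^k\cdot(ab)^{-1}\cdot ab^2 = ab^2\cdot(ab)^{k-1}\cdot ab^2$, one obtains the Chebyshev-style recurrence
\[
\ttr{wab^2(ab)^{k+1}ab^2} = \ttr{ab}\cdot\ttr{wab^2(ab)^k ab^2} - \ttr{wab^2(ab)^{k-1}ab^2}
\]
for $k\ge1$, and the analogous identity for $\ttr{w(ab)^{k+3}}$. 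Hence $d_{k+1} = \ttr{ab}\,d_k - d_{k-1}$, with closed form $d_k = d_0\,U_k(\ttr{ab}) + (d_1 - \ttr{ab}\,d_0)\,U_{k-1}(\ttr{ab})$, where $U_n$ denotes the Chebyshev polynomials of the second kind. Since $\ttr{ab}\ge\ttr{b^2}+1\ge5$, every $U_n(\ttr{ab})$ is strictly positive.

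A further application of Lemma~\ref{ref5}(4) using $ab^2\cdot(ab)^{-1}\cdot ab^2 = ab^3$ collapses the base values to
\[
d_0 = \ttr{w(ab)^3} - \ttr{w(ab^2)^2}, \qquad d_1 = \ttr{ab}\,d_0 + \ttr{wab^3} - \ttr{w(ab)^2},
\]
so the problem reduces to showing $d_0\ge 0$ and controlling the correction $\ttr{wab^3}-\ttr{w(ab)^2}$. For empty $w$ the base value $d_0$ is exactly the dichotomy quantity $\Delta = \ttr{(ab)^3}-\ttr{(ab^2)^2}$, which is $\ge2$ by Lemma~\ref{dichotomy}, and a direct computation using $\ttr{ab}\ge\ttr{b^2}+1$ yields $d_1>0$ as well.

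For a nonempty $w$ built out of the syllables $ab$ and $ab^2$, I would induct on the number of syllables, peeling off the leftmost one by Lemma~\ref{ref5}(4) and rotation invariance, and isolating a term proportional to $\Delta$ times a positive trace of the form $\ttr{w'(ab)^{?}}$ (positive by Lemma~\ref{ref8}(1)). The length-divisibility-by-6 hypothesis restricts the admissible combinations of syllable counts in $w$ and the residue of $k$ modulo~$3$, which makes the bases of the induction a finite computation done in the style of Lemmas~\ref{5A}--\ref{5F}.

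The main obstacle is the bookkeeping in the inductive step: the correction term $\ttr{wab^3}-\ttr{w(ab)^2}$ can be substantially negative (already for empty $w$), and ensuring it does not swamp the $\ttr{ab}\,d_0$ contribution at every stage of the syllable-peeling requires careful tracking of the $\Delta$-linear main terms against lower-order corrections, dominated via the subword inequality in Lemma~\ref{ref13}(2).
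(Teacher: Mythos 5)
Your recurrence idea is a genuinely different strategy from the paper's. The paper proceeds by a single chain of trace expansions: it writes $\ttr{wab^2(ab)^kab^2} = \ttr{w(ab)^k}\bigl(\ttr{(ab^2)^2}+2\bigr) - \ttr{wb^{-1}(ab)^{k-1}}\ttr{ab^2} - \ttr{wab^2(ab)^{k-1}ab^{-1}a^{-1}}$, bounds $\ttr{(ab^2)^2}+2$ by $\ttr{(ab)^3}$, expands $\ttr{w(ab)^k}\ttr{(ab)^3} = \ttr{w(ab)^{k+3}}+\ttr{w(ab)^{k-3}}$, and then kills the two remaining negative terms by Lemma~\ref{ref13}(2), with a case split on small $k$. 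Your approach instead isolates $d_k=\ttr{w(ab)^{k+3}}-\ttr{wab^2(ab)^kab^2}$ and derives the recurrence $d_{k+1}=\ttr{ab}\,d_k-d_{k-1}$, which is indeed correct for $k\ge1$ by Lemma~\ref{ref5}(4), and the computation of $d_0$ and $d_1$ is also correct.

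However, what you have is a plan, not a proof, and the gap you flag yourself is exactly where it breaks. First, $d_0\ge0$ and $d_1>0$ do \emph{not} by themselves force $d_k>0$ for all $k$: since the characteristic roots are $\lambda,\lambda^{-1}$ with $\lambda>1$, the coefficient of $\lambda^k$ in the closed form can be negative even with $d_0,d_1>0$ (take $d_1$ tiny relative to $d_0$). What you actually need is something like $d_1\ge d_0\ge 0$ with $d_1>0$, after which $d_2=\ttr{ab}d_1-d_0\ge(\ttr{ab}-1)d_1>d_1$ and the sequence is increasing by induction. You never establish this ordering; you only observe that $d_1-\ttr{ab}\,d_0=\ttr{wab^3}-\ttr{w(ab)^2}$ is a ``correction'' that ``can be substantially negative.'' Second, for nonempty $w$ you give no argument at all that $d_0=\ttr{w(ab)^3}-\ttr{w(ab^2)^2}\ge0$; this is a nontrivial comparison (for general $w$ built from $ab$ and $ab^2$, and of the same flavour as what the lemma itself is trying to prove), and the ``peel off a syllable and isolate a term proportional to $\Delta$'' step is not carried out. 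Third, the claim that $d_1>0$ for empty $w$ is asserted as ``a direct computation'' but not shown, and it is not obviously harmless: $\ttr{ab^3}-\ttr{(ab)^2}$ really is negative, so whether $\ttr{ab}\,\Delta$ dominates it requires an explicit estimate. Until these three points are settled the recurrence framework does not yet yield the inequality, and the paper's more pedestrian expansion-plus-Lemma~\ref{ref13} argument remains the only complete route offered here.
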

\begin{proof}
On the left side we have
\begin{align*}
&\ttr{wab^2(ab)^kab^2}\\
&= \ttr{wab^2(ab)^k}\ttr{ab^2} - \ttr{wab^2(ab)^{k-1}ab^{-1}a^{-1}}\\
&=\ttr{w(ab)^k}\ttr{ab^2}^2 - \ttr{wb^{-1}(ab)^{k-1}}\ttr{ab^2} - \ttr{wab^2(ab)^{k-1}ab^{-1}a^{-1}}\\
&=\ttr{w(ab)^k}\bigl(\ttr{(ab^2)^2} + 2\bigr) - \ttr{wb^{-1}(ab)^{k-1}}\ttr{ab^2} - \ttr{wab^2(ab)^{k-1}ab^{-1}a^{-1}}\\
&\le \ttr{w(ab)^k}\ttr{(ab)^3} - \ttr{wb^{-1}(ab)^{k-1}}\ttr{ab^2} - \ttr{wab^2(ab)^{k-1}ab^{-1}a^{-1}}\\
&= \ttr{w(ab)^{k+3}} + \ttr{w(ab)^{k-3}} - \ttr{wb^{-1}(ab)^{k-1}}\ttr{ab^2} - \ttr{wab^2(ab)^{k-1}ab^{-1}a^{-1}}.
\end{align*}
We then have to prove that the sum of all but the first summands
of the above end result is negative. We will prove this fact by showing the following two inequalities:
\begin{align*}
\ttr{w(ab)^{k-3}} &\le \ttr{wb^{-1}(ab)^{k-1}}\ttr{ab^2},\\
\ttr{wab^2(ab)^{k-1}ab^{-1}a^{-1}}&>0.
\end{align*}

Suppose $k\geq 3$.
If $w$ is non empty, it begins with $ab$ and ends with $b$.
Thus all the inverted letters simplify and the inequalities follow from Lemma~\ref{ref13}(2).
This also holds	when $w$ is the empty word, with different simplifications.
	
Suppose $k<3$. Then $w$ cannot be empty, for reasons of length, and the second inequality causes no problems.
If $k=2$ the first inequality becomes $\ttr{wb^{-1}a^{-1}}\le \ttr{wb^{-1}ab}\ttr{ab^2}$, clearly true.
If $k=1$ it becomes
$\ttr{w(ab)^{-2}}\le \ttr{wb^{-1}}\ttr{ab^2}$, which is also true since $\ttr{w(ab)^{-2}}=\ttr{wb^{-1}a^{-1}}\ttr{ab}-\ttr{w}$.
\end{proof}

The other case $\ttr{(ab)^3}<\ttr{(ab^2)^2}$ will be treated via the following lemma.

\begin{lemma}
Assume\label{62} $\ttr{(ab)^3}\le \ttr{(ab^2)^2}-2$.
Fix a word $w$, and let $k,h\geq 0$ be such that the length of
$ab^2wab(ab^2)^kab(ab^2)^hab$ is a multiple of $6$. If $w$ is empty, or $w$ is a product of $ab$ and $ab^2$, then we have
\[
\ttr{ab^2wab(ab^2)^kab(ab^2)^hab} <
\ttr{ab^2w(ab^2)^{k+h+2}}.
\]
\end{lemma}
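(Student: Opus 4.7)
The plan is to mirror the structure of the proof of Lemma~\ref{5C}, since the target inequality has exactly the same form; only the hypotheses under which we must prove it have changed. The approach proceeds in three main stages.

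First, I would expand the left-hand side by peeling off each of the three interior $ab$ factors via the identity $\ttr{UV} = \ttr{U}\ttr{V} - \ttr{UV^{-1}}$ of Lemma~\ref{ref5}(2), applied iteratively after cyclic rotations. This should yield, precisely as in equation~\eqref{eq5},
\[
\ttr{ab^2wab(ab^2)^kab(ab^2)^hab} = \ttr{w(ab^2)^{k+h+1}}\ttr{ab}^3 + R,
\]
where $R$ collects three negative cross-terms of the form $-\ttr{\cdots}\ttr{ab}^j$ with $j\in\{0,1,2\}$. The right-hand side expands, again via Lemma~\ref{ref5}(2) applied to the factor $(ab^2)^2$, as
\[
\ttr{ab^2w(ab^2)^{k+h+2}} = \ttr{w(ab^2)^{k+h+1}}\ttr{(ab^2)^2} - \ttr{w(ab^2)^{k+h-1}}.
\]

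Second, the coefficient of $\ttr{w(ab^2)^{k+h+1}}$ in RHS${}-{}$LHS equals
\[
\ttr{(ab^2)^2} - \ttr{ab}^3 = \bigl(\ttr{(ab^2)^2} - \ttr{(ab)^3}\bigr) - 3\ttr{ab},
\]
using $\ttr{(ab)^3} = \ttr{ab}^3 - 3\ttr{ab}$. By the standing hypothesis, the parenthesized quantity is at least~$2$, so the coefficient is at least $2 - 3\ttr{ab}$. Unlike in Lemma~\ref{5C}, this is significantly negative. The remainder of the proof must then show that the positive cross-terms extracted from $-R$, namely
\[
\ttr{wb(ab^2)^{k+h}}\ttr{ab}^2 + \ttr{wab(ab^2)^kb(ab^2)^h}\ttr{ab} + \ttr{bwab(ab^2)^kab(ab^2)^h},
\]
dominate both $(3\ttr{ab}-2)\ttr{w(ab^2)^{k+h+1}}$ and the extra $\ttr{w(ab^2)^{k+h-1}}$ coming from the RHS expansion.

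Third, the crucial reduction should be to the comparison $\ttr{wb(ab^2)^{k+h}}\ttr{ab} > 3\,\ttr{w(ab^2)^{k+h+1}}$, which captures the dominant balance once one divides through by $\ttr{ab}$. Applying Lemma~\ref{ref5}(2) to rewrite $\ttr{wb(ab^2)^{k+h}}\ttr{ab}$ and exploiting that $w$ begins with $ab$ or $ab^2$ (so enough prefix is available for the $b^{-1}$ and $a^{-1}$ factors arising in the expansion to cancel against the syllables of $w$), I expect to reach a positivity statement which follows from Lemma~\ref{ref13}(2). The length constraint (a multiple of~$6$) is expected to matter only in the boundary cases $k+h\in\{0,1\}$, where several intermediate traces collapse and must be checked separately; here the hypothesis that $w$ is a product of $ab$ and $ab^2$, together with the length parity, guarantees that the length condition is consistent with $w$ being nontrivial whenever the boundary argument requires it.

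The principal obstacle is exactly this third stage. In Lemma~\ref{5C} the leading coefficient $\ttr{(ab^2)^2}-\ttr{ab}^3$ was positive, so it was enough to discard the negative cross-terms and handle a single auxiliary inequality. Here the leading coefficient is negative and larger in absolute value than any single cross-term, so a genuine quantitative cancellation is needed. The strengthened hypothesis on $w$ (being a product of $ab$ and $ab^2$, rather than merely ending in $b$ as in Lemma~\ref{5C}) is what supplies the additional rigidity required for this cancellation to go through.
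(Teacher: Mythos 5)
Your stages one and two match the paper: the left-hand side is expanded exactly as in equation~\eqref{eq5}, and the coefficient of $\ttr{w(ab^2)^{k+h+1}}$ in the difference of the two sides is indeed $\ttr{(ab^2)^2}-\ttr{ab}^3 = \bigl(\ttr{(ab^2)^2}-\ttr{(ab)^3}\bigr)-3\ttr{ab}$, which your hypothesis only bounds below by $2-3\ttr{ab}$. The gap is in stage three. Your ``crucial reduction'' $\ttr{wb(ab^2)^{k+h}}\ttr{ab} > 3\ttr{w(ab^2)^{k+h+1}}$ is simply false. Take $w$ empty, $k=0$, $h=1$ (then $ab^2\cdot ab\cdot ab\cdot ab^2\cdot ab$ has length $12$, a multiple of $6$): the claim becomes $\ttr{ab^3}\ttr{ab}>3\ttr{(ab^2)^2}$. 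But $\ttr{ab^3}\ttr{ab}=\ttr{ab^2}^2+\ttr{a}\ttr{ab^2}-\ttr{ab}^2$ while $3\ttr{(ab^2)^2}=3\ttr{ab^2}^2-6$, and since $\ttr{a}\ttr{ab^2}<\ttr{ab^2}^2$ (as $\ttr{a}<\ttr{ab^2}$) the left side is less than $2\ttr{ab^2}^2$; the inequality is off by essentially a whole factor of $\ttr{ab^2}^2$. The cross-terms in $-R$ are collectively of the same order as $\ttr{w(ab^2)^{k+h+1}}\ttr{ab}$, not three times that, so they can never absorb a coefficient as large as $3\ttr{ab}-2$.

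What you are missing is that the hypothesis $\ttr{(ab)^3}\le\ttr{(ab^2)^2}-2$, combined with integrality, forces much more arithmetic rigidity than the single inequality suggests. The paper proves a preparatory lemma showing that this dichotomy case can only arise when $\ttr{ab}=\ttr{b^2}+1$ and $2\ttr{a}\le\ttr{b}$, from which one deduces the much sharper bound $\ttr{ab}^3\le\ttr{(ab^2)^2}+\ttr{ab}$. This reduces the deficit to be covered from $3\ttr{ab}-2$ down to a single $\ttr{ab}$ (plus the $\ttr{w(ab^2)^{k+h-1}}$ coming from expanding $\ttr{(ab^2)^2}$), which the cross-terms can handle. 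The remaining bookkeeping then uses the transposition identity $\ttr{xyzw}=\ttr{xzyw}+\ttr{xz^{-1}yw}-\ttr{xyz^{-1}w}$ to rearrange factors so that Lemma~\ref{ref13}(2) applies; the condition that $w$ be a product of $ab$ and $ab^2$ (and that indices are nonnegative) is what ensures the necessary cancellations of $a^{-1},b^{-1}$ leave genuine positive words. Without the auxiliary arithmetic lemma your approach cannot close, regardless of how the cross-terms are massaged.
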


Unfortunately, although the end inequality is the same, the proof of Lemma~\ref{62} is harder than that of its twin Lemma~\ref{5C}.
This is due to the fact that the final step in the proof of Lemma~\ref{62}, namely the inequality
$\ttr{ab}^3\le\ttr{(ab^2)^2}$, may fail; for example, it fails for the matrices $A=L^{11}$, $B=LNL$ cited in~\S\ref{ref12}. We have thus to make do with the weaker $\ttr{ab}^3\le\ttr{(ab^2)^2}+\ttr{ab}$, that can still be quite narrow: for the above matrices we have
\[
\ttr{ab}^3=3375\le
3377 = 3362 + 15 =
\ttr{(ab^2)^2}+\ttr{ab}.
\]
The following lemma establishes that weaker inequality, as well as the fact that the minimal difference $\tr(AB)=15>14=\tr(B^2)$ for the above matrix pair is no coincidence.

\begin{lemma}
Assume\label{aiuto} $\ttr{(ab^2)^2}\geq \ttr{(ab)^3}+2$. Then the following formulas hold:
\begin{align}
\ttr{ab}&=\ttr{b^2}+1, \label{aiuto1}\\
2\ttr{a}&\le \ttr{b}, \label{aiuto2}\\
\ttr{ab}^3&\le \ttr{(ab^2)^2} +\ttr{ab}.\label{aiuto3}
\end{align}
\end{lemma}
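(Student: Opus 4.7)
The plan is to anchor the whole argument on a single explicit identity that sharpens the mod-$\ttr{ab}$ congruence underlying Lemma~\ref{dichotomy}. Set $x=\ttr{a}$, $y=\ttr{b}$, $z=\ttr{ab}$. Using $\ttr{b^2}=y^2-2$, $\ttr{ab^2}=yz-x$, $\ttr{(ab^2)^2}=(yz-x)^2-2$, $\ttr{(ab)^3}=z^3-3z$, and $\ttr{a^2}=x^2-2$ (all consequences of Lemma~\ref{ref5}), direct algebra gives
\begin{equation*}
\ttr{(ab^2)^2}-\ttr{(ab)^3}=\ttr{a^2}+z\,k,\qquad k:=z(y^2-z)-2xy+3.
\end{equation*}
All three claims are then extracted by examining the sign of $k$ or of its constituents.

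For \eqref{aiuto1}, the section hypothesis $\tr(AB)>\tr(B^2)$ already forces $z\ge y^2-1$, and I would argue by contradiction assuming $z\ge y^2$. Then $z(y^2-z)\le 0$, so $k\le 3-2xy\le-9$ (using $y>x\ge 2$, hence $xy\ge 6$); multiplying by $z$ and combining with $\ttr{a^2}=x^2-2$ yields
\[
\ttr{(ab^2)^2}-\ttr{(ab)^3}\le x^2-y^2-2\le -2x-3<2,
\]
the second step using $y\ge x+1$, contradicting the hypothesis of the lemma. For \eqref{aiuto2}, plug $z=y^2-1$ into $k$ to obtain $k=y^2-2xy+2$; if $y<2x$, i.e.\ $y\le 2x-1$, then $y^2-2xy=y(y-2x)\le -y\le-(x+1)$, so $k\le 1-x\le-1$, which gives
\[
\ttr{(ab^2)^2}-\ttr{(ab)^3}\le x^2-2-(y^2-1)=x^2-y^2-1\le -2x-2<2,
\]
again a contradiction.

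For \eqref{aiuto3}, the desired inequality is equivalent to $(yz-x)^2\ge z^3-z+2$. Substituting $z=y^2-1$ (from \eqref{aiuto1}) and expanding, a short computation collapses the difference to
\[
(yz-x)^2-(z^3-z+2)=(y^2-1)\,y\,(y-2x)+x^2-2,
\]
which is strictly positive by \eqref{aiuto2} and $x\ge 2$.

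There is no conceptual obstacle here: the proof is pure bookkeeping. The one nonroutine move is spotting the factorization in the main identity, which is natural once one observes that the mod-$z$ congruence of Lemma~\ref{dichotomy} must upgrade to a genuine polynomial identity whose cofactor is linear in $z$. From that point, \eqref{aiuto1} and \eqref{aiuto2} come from straightforward sign analysis of $k$, and \eqref{aiuto3} reduces to a one-line nonnegativity check using the two already established claims.
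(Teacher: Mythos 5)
Your proof is correct; I verified all three parts. The main identity
\[
\ttr{(ab^2)^2}-\ttr{(ab)^3}=\ttr{a^2}+z\,k,\qquad k=z(y^2-z)-2xy+3,
\]
expands correctly, and each of the three sign/contradiction arguments goes through. (One small slip: you call the cofactor $k$ ``linear in $z$,'' but it is quadratic; this does not affect anything.)

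The route is genuinely different from the paper's and, in my view, cleaner. The paper proves each claim by a separate calculation: for \eqref{aiuto1} it assumes $x\ge\beta^2$, writes down a lower bound for $\ttr{(ab)^3}-\ttr{(ab^2)^2}$ (stated in the paper as an equality, but really only a lower bound --- valid because $\alpha\ge2$), and checks positivity directly; for \eqref{aiuto2} it first derives, using the identity $\ttr{ab}=\beta^2-1$ repeatedly inside trace identities, the formula $\ttr{(ab^2)^2}-\ttr{(ab)^3}=\beta^4-2\alpha\beta^3+\beta^2+2\alpha\beta+\alpha^2-4$, then passes to $\alpha=\lambda\beta$ and finishes with a floor-function estimate on $(1+\lambda)^2/\beta$; for \eqref{aiuto3} it rearranges the same formula into $\ttr{(ab^2)^2}+(2\alpha-\beta)(\beta^3-\beta)+\beta^2-\alpha^2+1$ and uses \eqref{aiuto2}. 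Your version factors the whole lemma through a single identity, making Lemma~\ref{dichotomy}'s congruence mod $\ttr{ab}$ explicit and reducing \eqref{aiuto1} and \eqref{aiuto2} to sign analysis of the cofactor $k$ with elementary integer estimates, no change of variables or floor arguments needed; your final step for \eqref{aiuto3}, $(yz-x)^2-(z^3-z+2)=(y^2-1)y(y-2x)+x^2-2$, is in fact the same expression as the paper's $(2\alpha-\beta)(\beta^3-\beta)+\beta^2-\alpha^2+1$ after the identification $z=\beta^2-1$. What your approach buys is a unified, mechanized bookkeeping where all three inequalities drop out of one computation; what the paper's buys is that the formula for $\ttr{(ab^2)^2}-\ttr{(ab)^3}$ in terms of $\alpha,\beta$ alone (after imposing \eqref{aiuto1}) is stated explicitly, which is the form reused later in the paper.
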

\begin{proof}
Let $\alpha = \ttr{a}$, $\beta = \ttr{b}$, $x = \ttr{ab}$; then
we have $\ttr{(ab)^3}=T_3(x)=x^3-3x$ and $\ttr{(ab^2)^2}=T_2(\beta x-\alpha)=(\beta x-\alpha)^2-2$.

Assuming the negation of~\eqref{aiuto1}, we have
$x\geq\ttr{b^2}+2=\beta^2$ and thus
\begin{align*}
\ttr{(ab)^3}-\ttr{(ab^2)^2}
&= x^3 -\beta^2x^2 + (4\beta-3)x -2 \\
&\ge 	(4\beta-3)x -2.
\end{align*}
Since $\beta$ and $x$ are both at least $3$, the last term is positive, which is a contradiction;
this establishes~\eqref{aiuto1}.

Applying~\eqref{aiuto1} and its equivalent form $\ttr{b}^2=\ttr{ab}+1$ several times, we compute
\begin{align*}
\ttr{(ab^2)^2} &= \ttr{ab^2ab}\ttr{b} - \ttr{a^2b^2}\\
& = \ttr{abab}\ttr{b}^2 - \ttr{a^2b}\ttr{b} -\ttr{a^2b}\ttr{b} +\ttr{a^2}\\
& = \ttr{(ab)^2}\bigl(\ttr{ab}+1\bigr) -2\bigl(\ttr{a}\ttr{ab}\ttr{b}-\ttr{b}^2\bigr)+\ttr{a}^2-2\\
& = \ttr{(ab)^3} + \ttr{ab} + \ttr{(ab)^2} -2\ttr{a}\ttr{ab}\ttr{b} +2\ttr{b}^2+\ttr{a}^2-2\\
& = \ttr{(ab)^3} + \beta^2-1 + (\beta^2-1)^2-2 -2\alpha\beta(\beta^2-1) +2\beta^2 + \alpha^2-2\\
& = \ttr{(ab)^3} + \beta^4 -2\alpha\beta^3 +\beta^2 +2\alpha\beta +\alpha^2-4;
\end{align*}
therefore our hypothesis yield
\[
2\alpha\beta^3-\beta^4 \le
\beta^2 +2\alpha\beta +\alpha^2 -6.
\]
We change variables by setting $\alpha=\lambda\beta$, and obtain
\begin{align*}
(2\lambda -1)\beta^4 &\le (1+2\lambda +\lambda^2)\beta^2-6,\\
(2\lambda -1)\beta &< \frac{(1+\lambda)^2}{\beta},\\
2\lambda\beta &< \beta + \frac{(1+\lambda)^2}{\beta},\\
2\alpha &\le \beta + \biggl \lfloor \frac{(1+\lambda)^2}{\beta} \biggr \rfloor,
\end{align*}
the last step justified by the fact that $2\lambda\beta=2\alpha$ is an integer.
If $\beta=3$ then $\alpha=2$, $\lambda=2/3$, and the last inequality means $4\le 3$. Therefore $\beta \geq 4$ and the floor part is zero since $1+\lambda <2$; this proves~\eqref{aiuto2}.
	
We previously computed that
\[
\ttr{(ab^2)^2}-\ttr{(ab)^3} = \beta^4 -2\alpha\beta^3 +\beta^2 +2\alpha\beta +\alpha^2-4.
\]
Since $\ttr{ab}^3=\ttr{(ab)^3}+3\ttr{ab}$ and $\ttr{ab}=\beta^2-1$ we obtain
\begin{align*}
\ttr{ab}^3 &= \ttr{(ab^2)^2} -\beta^4 +2\alpha\beta^3 -\beta^2 -2\alpha\beta -\alpha^2+4 + 3\beta^2-3\\
&= \ttr{(ab^2)^2} -\beta^4 +2\alpha\beta^3 +2\beta^2 -2\alpha\beta -\alpha^2+1 \\
&= \ttr{(ab^2)^2} + (2\alpha-\beta)(\beta^3-\beta) +\beta^2 -\alpha^2+1\\
&\le \ttr{(ab^2)^2} +\beta^2 -\alpha^2+1\\
&\le \ttr{(ab^2)^2} +\ttr{ab},
\end{align*}    
thus settling~\eqref{aiuto3}.
\end{proof}

\begin{proof}[Proof of Lemma~\ref{62}]
Continuing from~\eqref{eq5} (whose proof does not depend on the relative values of $\ttr{a}$, $\ttr{b}$, $\ttr{sb}$) and applying~\eqref{aiuto3}, we obtain
\begin{align*}
\ttr{ab^2wab(ab^2)^kab(ab^2)^hab}
&= \ttr{w(ab^2)^{k+h+1}}\ttr{ab}^3
-\ttr{wb(ab^2)^{k+h}}\ttr{ab}^2\\
& \qquad - \ttr{wab(ab^2)^kb(ab^2)^{h}}\ttr{ab}
- \ttr{bwab(ab^2)^kab(ab^2)^h}\\
&\le \ttr{w(ab^2)^{k+h+1}}\ttr{(ab^2)^2}  \\
&\qquad + \ttr{w(ab^2)^{k+h+1}}\ttr{ab} - \ttr{wb(ab^2)^{k+h}}\ttr{ab}^2\\
& \qquad - \ttr{wab(ab^2)^kb(ab^2)^{h}}\ttr{ab} - \ttr{bwab(ab^2)^kab(ab^2)^h}\\
&= \ttr{w(ab^2)^{k+h+3}} + \ttr{w(ab^2)^{k+h-1}}\\
&\qquad + \ttr{w(ab^2)^{k+h+1}}\ttr{ab} - \ttr{wb(ab^2)^{k+h}}\ttr{ab}^2\\
& \qquad - \ttr{wab(ab^2)^kb(ab^2)^{h}}\ttr{ab} - \ttr{bwab(ab^2)^kab(ab^2)^h}.	
\end{align*}
Thus we need to prove that
\begin{equation}\label{666}
\begin{split}
\ttr{w(ab^2)^{k+h-1}} &+ \ttr{w(ab^2)^{k+h+1}}\ttr{ab}\\
< \ttr{wb(ab^2)^{k+h}}\ttr{ab}^2 + \ttr{wab(ab^2)^k&b(ab^2)^{h}}\ttr{ab} +  \ttr{bwab(ab^2)^kab(ab^2)^h}.
\end{split}
\end{equation}
We will need to switch the positions of some factors, and this will be accomplished by the formula
\begin{equation}\label{eq6}
\ttr{xyzw} = \ttr{xzyw} + \ttr{xz^{-1}yw} - \ttr{xyz^{-1}w},
\end{equation}
which moves $z$ to the left; of course, an analogous formula holds for moving to the right. Formula~\eqref{eq6} follows from Lemma~\ref{ref5}(2), since both $\ttr{xyzw} + \ttr{xyz^{-1}w}$ and
$\ttr{xzyw}+\ttr{xz^{-1}yw}$ equal $\ttr{xyw}\ttr{z}$.
	
In order to obtain~\eqref{666}, we work on the second summand of the second line. Moving $b$ to the left we get
\begin{align*}
\ttr{wab(ab^2)^kb(ab^2)^{h}}\ttr{ab}
&= \ttr{w(ab^2)^{k+h+1}}\ttr{ab} + \ttr{wa(ab^2)^{k+h}}\ttr{ab}\\
&\quad -\ttr{wab(ab^2)^{k-1}ab(ab^2)^{h}}\ttr{ab}\\
&= \ttr{w(ab^2)^{k+h+1}}\ttr{ab} + \ttr{wa(ab^2)^{k+h}}\ttr{ab} \\
&\qquad - \ttr{wab(ab^2)^{k-1}abab(ab^2)^{h}} - \ttr{wab(ab^2)^{k+h-1}}.
\end{align*}
Substituting this back into~\eqref{666}, the summand
$\ttr{w(ab^2)^{k+h+1}}\ttr{ab}$ simplifies.
The summand $\ttr{wab(ab^2)^{k-1}(ab)^2(ab^2)^{h}}$ is, by
Lemma~\ref{ref13}(2), always dominated by the last term
of~\eqref{666}, even when $k=h=0$.
Removing both of them we remain with the inequality
\begin{multline*}
\ttr{w(ab^2)^{k+h-1}} + \ttr{wab(ab^2)^{k+h-1}} \\
< \ttr{wb(ab^2)^{k+h}}\ttr{ab}\ttr{ab} +
\ttr{wa(ab^2)^{k+h}}\ttr{ab},
\end{multline*}
which holds by Lemma~\ref{ref13}(2). The case $k=h=0$ must be checked apart, but causes no problems.
\end{proof}

\begin{theorem}
Let\label{ref18} $A,B\in\SL_2\Zbb\p$, and assume $\tr(A)<\tr(B)$ and $\tr(AB)>\tr(B^2)$. If $\tr((AB)^3)>\tr((AB^2)^2)$,
then $ab$ is the only optimal word; otherwise, so is $ab^2$
\end{theorem}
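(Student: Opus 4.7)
My plan is to follow the template of Theorem~\ref{ref16}: pass to a trace-maximizing word $u$ of length $n$ divisible by~$6$, argue that its syllabic decomposition involves only the short syllables $ab$ and $ab^2$, then invoke Lemma~\ref{dichotomy} to branch into the two subcases, collapsing $u$ to $(ab)^{n/2}$ or $(ab^2)^{n/3}$ via Lemma~\ref{61} or Lemma~\ref{62} respectively. The reduction to $n$ divisible by~$6$ is the standard cubing trick; once the trace-maximizer at every such length is identified, the unique maximal Lyndon word is forced to be $ab$ in Case~1 and $ab^2$ in Case~2.

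Two preparatory steps precede the case split. First, Lemma~\ref{4-5-6} removes $a^2$ factors, and a short computation shows $u\neq b^n$: in Case~1 one has $\ttr{b^n}<\ttr{(ab)^{n/2}}$ because $\ttr{ab}>\ttr{b^2}$, and in Case~2 one has $\ttr{b^n}<\ttr{(ab^2)^{n/3}}$ because
\[
\ttr{ab^2}-\ttr{b^3}=\bigl(\ttr{ab}-\ttr{b^2}\bigr)\ttr{b}+\bigl(\ttr{b}-\ttr{a}\bigr)>0.
\]
The genuinely new ingredient is that no $ab^3$ occurs as a factor of $u$. Using Lemma~\ref{ref5}(2) twice together with the identity $\ttr{wab}+\ttr{wab^{-1}}=\ttr{wa}\ttr{b}$, I rearrange
\[
\ttr{wabab}-\ttr{wab^3}=\ttr{wab}\bigl(\ttr{ab}-\ttr{b^2}-1\bigr)+\ttr{wa}\ttr{b}-\ttr{w}.
\]
Both summands are nonnegative (the first because $\ttr{ab}\ge\ttr{b^2}+1$ under the standing hypothesis) and the second is strictly positive by Lemma~\ref{ref13}(2), so rotating $u$ so that the $ab^3$ factor lies at the end and replacing it by $abab$ yields a same-length word of strictly larger trace, contradicting maximality. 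Hence every syllable of $u$ is $ab$ or $ab^2$; write $p,q$ for the respective counts, so $2p+3q=n\equiv0\pmod 6$.

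In Case~1 ($\ttr{(ab)^3}\ge\ttr{(ab^2)^2}+2$) I iterate Lemma~\ref{61}: whenever $p\ge1$ and $q\ge2$, I rotate $u$ to expose a factor $ab^2(ab)^kab^2$ with $k\ge1$ and apply the lemma, which decreases $q$ by~$2$ while preserving length. The possibility $q=1$ is excluded because $2p+3$ is odd, $q=0$ gives the desired $u=(ab)^{n/2}$, and $p=0$ forces $q$ even by length divisibility, after which $(ab)^{3q/2}$ strictly dominates $(ab^2)^q$ by the Case~1 hypothesis. Case~2 is symmetric, with Lemma~\ref{62} shrinking $p$ by three and raising $q$ by two: $p\in\{1,2\}$ is killed by length divisibility, $p\ge3$ with $q=0$ is handled by direct comparison of $(ab)^p$ with $(ab^2)^{2p/3}$ (which makes sense since $p$ is then divisible by~$3$), and $p=0$ gives $u=(ab^2)^{n/3}$.

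The main obstacle is the rotation step needed to feed $u$ into Lemma~\ref{62}, and more mildly Lemma~\ref{61}: the lemma demands the rigid shape $ab^2wab(ab^2)^kab(ab^2)^hab$ with $w$ itself a product of $ab$ and $ab^2$. I therefore need to select three cyclically consecutive $ab$-syllables of $u$ whose $k+h$ intervening $ab^2$'s still leave at least one $ab^2$ outside them, and then rotate so that this outside $ab^2$ opens the word. A counting argument on the $p$ cyclic gaps between consecutive $ab$-syllables---the $p$ two-consecutive-gap sums total $2q$, so some adjacent pair sums to less than~$q$ whenever $p\ge3$ and $q\ge1$---makes this possible in every non-trivial configuration, and once such a rotation is in hand the syllabic structure of the residual $w$ is automatic.
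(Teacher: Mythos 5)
Your argument follows the paper's line exactly: pass to a trace-maximizer of length divisible by $6$, exclude $a^2$ via Lemma~\ref{4-5-6} and $b^3$ by an equivalent trace identity, decompose the result into $ab$ and $ab^2$ syllables, then close the case split with Lemmas~\ref{dichotomy}, \ref{61}, and~\ref{62}. The extra attention you give to the rotation needed to feed $u$ into Lemmas~\ref{61}/\ref{62} addresses a step the paper leaves implicit, though your pair-sum counting is more elaborate than necessary: once $p\ge3$, picking any nonempty cyclic gap between consecutive $ab$-syllables and taking the three $ab$'s ending at that gap already yields the required rotation.
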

\begin{proof}
As in the proof of Theorem~\ref{ref16}, let $u$ be a word of length a multiple of~$6$ which is trace-maximizing among all words of the same length. The statement will result by proving that $u$ is a power of $ab$ (in case $\ttr{(ab)^3}>\ttr{(ab^2)^2}$), or of $ab^2$ (in case $\ttr{(ab)^3}<\ttr{(ab^2)^2}$, no equality being possible by Lemma~\ref{dichotomy}).
	
Since $\ttr{b^2}<\ttr{ab}$, at least one $a$ appears in $u$, but $a^2$ does not by Lemma \ref{4-5-6}.
We claim that the factor $b^3$ is also excluded.
Indeed, for every $w$ we have
\begin{multline*}
\ttr{wab^3}=\ttr{wab}\ttr{b^2}-\ttr{wab\m}
\le\ttr{wab}\ttr{ab}-\ttr{wab}-\ttr{wab\m}\\
=\ttr{w(ab)^2}+\ttr{w}-\ttr{wa}\ttr{b}<\ttr{w(ab)^2}.
\end{multline*}
Therefore, $u$ factors uniquely as a product of the syllables
$ab$ and $ab^2$.

Suppose that both syllables appear; since $\abs{u}$ is a multiple
of~$6$, $ab^2$ must appear at least two times, and $ab$ at least three times. If $ab^2\prec ab$ then Lemma~\ref{61} contradicts the maximality of $u$, and the same does Lemma~\ref{62} if $ab\prec ab^2$. Thus only one syllable appears in $u$, and the proof is complete.
\end{proof}

\end{document}